\newtheorem{thm}{Theorem}[subsection] 
\let\c@fact\c@theorem\makeatother\newtheorem{lem}[thm]{Lemma}
\newtheorem{cor}[thm]{Corollary}
\newtheorem{prop}[thm]{Proposition}
\newtheorem{assumptions}[thm]{Assumptions}
\theoremstyle{definition}
\newtheorem{rem}[thm]{Remark}
\numberwithin{equation}{subsection}
\numberwithin{thm}{section}
\newcommand{\wdelta}{{\widetilde{\mathfrak d}}}
\newcommand{\wnabla}{{\widetilde\nabla}}
\newcommand{\wrnabla}{{{\widetilde\nabla}}_{\text{\rm rad}}}
\newcommand{\wF}{{\widetilde F}}
\newcommand{\grhom}{\text{\rm hom}}
\newcommand{\grHom}{\text{\rm hom}}
\newcommand{\grExt}{\text{\rm ext}}
\newcommand{\wfa}{\widetilde{\mathfrak a}}
\newcommand{\wrDelta}{{\widetilde{\Delta}}^{\text{\rm red}}}
\newcommand{\wX}{{\widetilde{X}}}
\newcommand{\wgr}{\widetilde{\text{\rm gr}}\,}
\newcommand{\wsoc}{\widetilde{\text{\rm soc}}}
\newcommand{\wrad}{\widetilde{\text{\rm rad}}\,}
\newcommand\Xreg{X_{\text{\rm reg}}(T)_+}
\newcommand{\whQ}{\widehat Q}
\newcommand{\gr}{\text{\rm gr}}
\newcommand{\sA}{{\mathcal A}}
\newcommand{\Ext}{{\text{\rm Ext}}}
\newcommand{\fa}{{\mathfrak a}}
\newcommand{\Amod}{A\mbox{--mod}}
\newcommand{\Hom}{\text{\rm Hom}}
\newcommand{\ch}{\operatorname{ch}}
\newcommand{\soc}{\operatorname{soc}}
\newcommand{\sO}{{\mathcal{O}}}
\newcommand{\Uzmod}{U_\zeta{\text{\rm --mod}}}
\newcommand{\rad}{\operatorname{rad}}
\newcommand{\Dist}{\operatorname{Dist}}
\newcommand{\rDelta}{\Delta^{\text{\rm red}}}
\newcommand{\rnabla}{\nabla_{\text{\rm red}}}
\newcommand{\wM}{{\widetilde{M}}}
\newcommand{\wA}{{\widetilde{A}}}
\newcommand{\wB}{{\widetilde{B}}}
\newcommand{\wU}{{\widetilde U}}
\newcommand{\wDelta}{{\widetilde{\Delta}}}
\newcommand{\wN}{{\widetilde{N}}}
\newcommand{\wD}{{\widetilde{D}}}
\newcommand{\wP}{{\widetilde{P}}}
\newcommand{\wQ}{{\widetilde{Q}}}
\newcommand{\Jan}{\Gamma_{\text{\rm Jan}}}
\newcommand{\wL}{{\widetilde{L}}}
\newcommand{\wY}{{\widetilde{Y}}}
\newcommand{\St}{{\text{\rm St}}}
\newcommand{\Gmod}{{G{\text{\rm --mod}}}}
\newcommand{\blist}{\begin{list}{\rom{(\roman{enumi})}}{\setlength
{\leftmarg in}{0em} \setlength{\itemindent}{7ex}
\setlength{\labelsep}{2ex}\setlength{\listparindent}{\parindent}
\usecounter{enumi}}}
\newcommand{\elist}{\end{list}}
\subjclass{Primary 20G05, 17B37; Secondary 16W50}
\begin{document}

\begin{abstract}This paper considers Weyl modules for a simple, simply connected algebraic group $G$
over an algebraically closed field $k$ of positive characteristic $p\not=2$. The main result proves, if $p\geq 2h-2$ (where $h$ is the Coxeter number) and if the Lusztig character formula holds for all (irreducible modules with) $p$-regular $p$-restricted highest weights, then any Weyl module $\Delta(\lambda)$ has a $\Delta^p$-filtration, namely, a filtration
with sections of the form $\Delta^p(\mu_0+p\mu_1):=L(\mu_0)\otimes\Delta(\mu_1)^{[1]}$, where $\mu_0$ is $p$-restricted and $\mu_1$
is arbitrary dominant. In case the highest weight $\lambda$ of the Weyl module $\Delta(\lambda)$ is $p$-regular, the $p$-filtration is
compatible with the $G_1$-radical series of the module.  The problem of showing that Weyl modules
have $\Delta^p$-filtrations was first proposed  by Jantzen in 1980. The proof in this paper is based on
new methods involving ``forced gradings" arising from orders associated to quantum enveloping
algebras.  A new $\Ext^1$-criterion is proved for $\Delta^p$-filtrations, but only in the context of such
forced gradings. Finally, these results have already had applications to the $G$-module structure of Ext-groups for
the restricted enveloping algebra of $G$.  \end{abstract}

 \title[On $p$-filtrations of Weyl modules]{On $p$-filtrations of Weyl modules}\author{Brian J. Parshall}
\address{Department of Mathematics \\
University of Virginia\\
Charlottesville, VA 22903} \email{bjp8w@virginia.edu {\text{\rm
(Parshall)}}}
\author{Leonard L. Scott}
\address{Department of Mathematics \\
University of Virginia\\
Charlottesville, VA 22903} \email{lls2l@virginia.edu {\text{\rm
(Scott)}}}

\thanks{Research supported in part by the National Science
Foundation}
\maketitle

\section{Introduction} Let $G$ be a simple, simply connected algebraic group defined and split
over a prime field ${\mathbb F}_p$. For a dominant weight $\lambda$, let $\Delta(\lambda)$
be the Weyl module for $G$ of highest weight $\lambda$.  It has dimension and character given
by Weyl's dimension and character formulas, respectively,   reflecting
the fact that it arises through a standard reduction mod $p$ process from a minimal admissible lattice in the
irreducible module $L_{\mathbb C}(\lambda)$ for the complex simple Lie algebra of the same type as $G$. Given a dominant
weight $\mu$, write $\mu=\mu_0+p\mu_1$, where $\mu_0$ (resp., $\mu_1$) is a $p$-restricted (resp., arbitrary) dominant weight.
Put $\Delta^p(\mu):=L(\mu_0)\otimes \Delta(\mu_1)^{[1]}$, where, in general, given a rational $G$-module $V$, $V^{[1]}$
denotes the rational $G$-module obtained by making $G$ act on $V$ through the Frobenius morphism $F:G\to G$. An old question, going back to Janzten's 1980 Crelle paper \cite{Jan}, asks if every Weyl
module $\Delta(\lambda)$ has a  $\Delta^p$-filtration, i.~e., a filtration by $G$-submodules with sections isomorphic to modules of
the form $\Delta^p(\mu)$.   In his paper, Jantzen gave a positive answer to this question
when $\lambda$ is ``generic" in the sense of being sufficiently far from the walls, and in the cases when $G\cong SL_2$
or $SL_3$. See \cite[Thm. 3.8, Rem. 3.8(2), 3.13]{Jan}. 
  (There is also the dual notion of a  $\nabla_p$-filtration, using modules $\nabla_p(\mu):=L(\mu_0)\otimes\nabla(\mu_1)^{[1]}$; see (\ref{pmodule}) below.) 

 Let $p\geq 2h-2$ be an odd prime\footnote{The assumption that $p$ is odd is largely a convenience, since $p=2\geq 2h-2$ implies $G=SL_2$. In this case, the Jantzen question on $p$-filtrations has an easy and positive answer.},
 where $h=(\rho,\alpha_0^\vee)+1$ is the Coxeter number of $G$. Theorem \ref{MainTheorem}, the main theorem of this paper, establishes that,
 if the Lusztig character formula
 holds for all $p$-restricted irreducible modules, then each Weyl module $\Delta(\lambda)$,  with $\lambda$
 any dominant weight, has a
$\Delta^p$-filtration.  A surprising feature of the proof is that, in the case of $p$-regular $\lambda$, the $\Delta^p$-filtration of $\Delta(\lambda)$ is  consistent with its $G_1$-radical filtration.\footnote{The passage from $p$-regular to non-$p$-regular highest weights is achieved though
 an elementary use of Jantzen translation functors. Translation preserves $\Delta^p$-filtrations, though the $G_1$-radical filtration might be lost (an open question).} Moreover, it seems necessary in the argument to prove the theorem in this form. Perhaps even more
 surprising, it also seems necessary to approach these Weyl modules and their $G_1$-radical series through
 their quantum analogues. The key point hinges on a new $\Ext^1$-criterion (see Theorem
  \ref{firstmainnum}), important in its own right,
  for the existence of a
 $\Delta^p$-filtration of a Weyl module by passing to a context of graded modules, and with the grading constructed from a filtration in a quantum lifting. Such liftings may not exist for all modules, but they do exist for Weyl modules. It is important to emphasize that the $\Ext^1$-criterion is available only in a graded context
 afforded by such liftings. 
 
 Let $U_\zeta$ be the (Lusztig) quantum enveloping algebra (having the same root type as
 $G$) at a primitive $p$th root of unity $\zeta$. The proof of Theorem \ref{MainTheorem} makes essential
 use of new results in integral representation theory (developed in \cite{PS10}) to link the representation theory
 of  $U_\zeta$ and that of $G$. (Here the integral algebras are taken over a suitable DVR $\sO$.)
 In particular, the irreducible $U_\zeta$-module $L_\zeta(\lambda)$ of highest weight $\lambda$ has
 two natural ``reductions mod $p$", denoted $\rDelta(\lambda)$ and $\rnabla(\lambda)$. These modules, 
 defined in (\ref{reduced}), 
 have properties somewhat similar to Weyl and dual Weyl modules, and their characters and dimensions
 are given in terms of Kazhdan-Lusztig polynomials.  Assume $p>h$ and the Lusztig character formula for $G$ holds\footnote{See fn. 6 for further discussion.} for irreducible modules having $p$-restricted highest weights. It follows that $\rDelta(\lambda)=\Delta^p(\lambda)$ for any dominant weight $\lambda$; see Lemma \ref{LCF}(c).  Surprisingly,  it is often easier and more natural to work with the $\rDelta$-modules, coming from the
 quantum enveloping algebra $U_\zeta$, rather than the intrinsic $G$-modules $\Delta^p(\lambda)$.\footnote{As we
 show below, any Weyl module $\rDelta(\lambda)$ has a $\Delta^p$-filtration, at least for $p\geq 2h-2$, so, in order to show that $\Delta(\lambda)$ has a $\Delta^p$-filtration, it suffices to show that $\Delta(\lambda)$
 has a $\rDelta$-filtration.  However, there is an example, when $p=2$, 
 due to Will Turner (unpublished), of a Weyl module which does not have an $\rDelta$-filtration. In this case,
 $G=SL_5$ and $h=5$. We have checked that it does have a $\Delta^p$-filtration. We do not know of examples of Weyl modules (for any prime) that do not have a $\Delta^p$-filtration. }  Also, see Theorem \ref{thm7.1} and Remark \ref{rem5.2} for positive results involving  the modules $\rDelta(\lambda)$, assuming
  the Lusztig character formula holds for some $p$-restricted irreducible modules.
 
 An essential feature of our method here, which builds on ideas developed in \cite{PS10}, involves a ``forced
 grading" $\gr\wA$ of the integral quasi-hereditary algebras $\wA$ which connect the representation theory of $U_\zeta$
 and $G$. Thus, a main result \cite[Thm. 5.3]{PS10} shows that $\gr\wA$ is itself quasi-hereditary. Then
 Corollary 3.2  establishes that the (integral) algebra $\wB:=(\gr\wA)_0$ obtained from the
 grade 0 piece of $\gr\wA$ is quasi-hereditary with standard modules $\wrDelta(\lambda)$, a lift of
 $\rDelta(\lambda)$ to an $\sO$-lattice. In this way, the modules $\rDelta(\lambda)$ arise
 naturally. At this point, there is no assumption that the Lusztig character formula holds. However, it enters
 into Lemma \ref{preliminaryLCF}(c) and thus plays a role in the establishment of Theorem \ref{thm4.4} and its corollary
 Theorem \ref{firstmainnum}, the  
$\Ext^1$-criterion mentioned above.  Using several further technical results (for example, it is required to prove that Weyl modules have
``tight liftings"; see definition immediately after the proof of Theorem \ref{secondmainnum}), we obtain the main results in \S5. 

The $\Ext^1$-criterion given in Theorem \ref{firstmainnum} suggests there should be $\rDelta$-filtration results for
appropriately defined syzygies of a module $\rDelta(\lambda)$ and graded versions of the Weyl
modules $\Delta(\lambda)$. This is indeed the case, but the arguments require additional considerations
involving Koszul algebras, both using and improving upon the results of \cite{PS9}. This work is
presented in a sequel \cite{PS15} to this paper. In particular, it proved, under the hypotheses of \S 5, that $\Ext^n_{G_1}(\rDelta(\lambda),
\rnabla(\mu))^{[1]}$ has a good filtration for all $p$-regular dominant weights $\lambda,\mu$. Here $G_1$ is the first Frobenius kernel of $G$.
 
 As noted above, the problem of finding a $\Delta^p$-filtration for Weyl modules was first proposed by
 Jantzen \cite[p. 173]{Jan}, who obtained several positive results in his paper. In addition, Jantzen observed
 that, for $p\geq 2h-2$, the character of the Weyl module could be written as a non-negative linear
 combination of the characters of modules $\Delta^p(\mu)$.  Under the $p\geq 2h-2$ assumption, Andersen proposed in \cite{A1} a
 positive solution. While providing worthwhile connections to other problems
 and conjectures, his proposed solution was later withdrawn \cite{A2}. To our knowledge, there has  been no
 further progress since Andersen's work, though Parker obtained in \cite{P2} an interesting extension
 of Jantzen's $SL_3$ work to quantum  $GL_3$ at a root of unity $\ell\geq 2$ over fields of arbitrary
  characteristic.  
\medskip\medskip

 \begin{center} {\bf Notation}\end{center}
 
 The following notation related to a $p$-modular system will be used throughout the paper.

\medskip
\begin{enumerate}

\item $(K,\sO,k)$: $p$-modular system. Thus, $\sO$ is a DVR with maximal ideal ${\mathfrak m}=(\pi)$,  fraction field $K$, and residue
field $k$. An $\sO$-lattice $\wM$ is, by definition, an $\sO$-module which is free and of finite rank.

\item  $\wA$: $\sO$-algebra which is finite and free over $\sO$. Let $\wA_K:= K\otimes_{\sO}\wA$ and
$A:=k\otimes_{\sO}\wA$. More generally, if $\wM$ is an $\wA$-module, put $\wM_K:=K\otimes_{\sO}\wM$
and $M= \wM_k:=k\otimes_{\sO}\wM$. Sometimes, $\wM_k$ is also denoted $\overline{\wM}$. Often
$\wM$ will be finite and free over $\sO$---namely, a {\it lattice} for $\sO$ (or $\wA$).

\item Let $\wM$ be an $\wA$-lattice. Put $\wrad^n\wM:=\wM\cap \rad^n\wM_K$, where $\rad^n\wM_K$ denotes the $n$th-radical of 
the $\wA_K$-module $\wM_K$. Of course, $\rad^n\wM_K=(\rad^n\wA_K)\wM_K$.

Dually, let $\widetilde\soc^{-n}\wM:=\soc^{-n}\wM_K\cap\wM$, $n=0,1,\cdots$, where $\{\soc^{-n}\wM_K\}_n$ 
is the socle series of $\wM_K$.

\item Again, let $\wM$ is an $\wA$-lattice. $\gr \wM:=\bigoplus_{n\geq 0}\wrad^n\wM/\wrad^{n+1}\wM$, viewed as a (positively) graded module for the $\sO$-algebra $\gr\wA:=\bigoplus_{n\geq 0}\wrad^n\wA/\wrad^{n+1}\wA$.
Notice that $\gr\wA$ is an $\sO$-lattice, and that   $\gr \wM$ is 
a $\gr\wA$-lattice.

Dually, let $\gr^\diamond\wM:=\bigoplus_{n\geq 0}\widetilde\soc^{-n}\wM/\widetilde\soc^{-n+1}\wM$, regarded
as a {\it negatively} graded $\gr\wA$-lattice. Observe that, taking $\sO$-duals, $(\gr^\diamond\wM)^*=(\gr \wM^*)$
as $(\gr\wA)^{\text{\rm op}}$-lattice.

\item We say that a $\wA$-lattice $\wM$ is {\it $\wA$-tight} (or just {\it tight}, if $\wA$ is clear from context) if
\begin{equation}\label{rad}(\wrad^n\wA)\wM=\wrad^n\wM,\quad\forall n\geq 0.
\end{equation}
Clearly, if $\wM$ is also $\wA$-projective, then it is tight. (We will see that many other lattices can be tight.)

\item Now let $\wfa$ be an $\sO$-subalgebra of $\wA$. Then items (2)--(5) all make perfectly good sense
using $\wfa$ in place of $\wA$. If $\wM$ is an $\wA$-lattice, then it is an $\wfa$-lattice. 
 In our applications later, it will usually be the case that 
$(\rad^n\wfa_K)\wA_K=\rad^n\wA_K$, for all $n\geq 0$; see (\ref{radicals}). In that case, if $\wM$ is an $\wA$-lattice,
then $\wrad^n\wM$ can be constructed viewing $\wM$ as an $\wA$-lattice or as an $\wfa$-lattice. Both
constructions lead to identical $\sO$-modules.
Ambiguities of a formal nature may still arise as to whether it is more appropriate to use $\wfa$ or $\wA$, but are
generally resolved by context. Similar remarks apply for $\gr\wM$. Often the $\wA$-tightness
of $\wM$ is the same as its $\wfa$-tightness; see Corollary \ref{tightcor}.
\end{enumerate}

\medskip
 
 Now suppose, for the moment, that $A$ is a positively graded algebra. Given graded $A$-modules $M,N$, the {\it graded}
 $\Ext^n$-groups, will be denoted $\grExt^n_A(M,N)$, $n=0,1,\cdots$.  (These are the $\Ext^n$-groups
 computed in the category of graded $A$-modules.) When $n=0$, the space of
 homomorphisms $M\to N$ preserving grades is denoted $\grHom_A(M,N)= \grExt^0_A(M,N)$. The 
 ordinary extension groups are $\Ext^n_A(M,N)$, where $M,N$ need not be graded. If $M,N$ are graded,
 $\Ext^n_A(M,N)=\bigoplus_{r\in\mathbb Z}\grExt^n_A(M,N(r))$, $n=0,1,2,\cdots$, where $N(r)$ is the graded module
 obtained from $N$ setting $N(r)_i:=N(i-r)$.

\section{Algebraic and quantum groups}
\setcounter{equation}{0}

Let $G$ be a simple, simply connected algebraic group defined and split over ${\mathbb F}_p$, where $p$ is a prime integer.
Fix a maximal split torus $T$ of $G$ and a Borel subgroup $B\supset T$.  Let $R$ be the root system of
$T$, and let $S$ be the set of simple roots corresponding to the Borel subgroup $B^+$ opposite to $B$.
Throughout this paper, we will, with a few exceptions, carefully follow the (very standard) notation laid down in \cite[pp. 569--572]{JanB} regarding $G$ and its representation theory. Two exceptions are as follows.  Given
$\lambda\in X(T)_+$, the Weyl module for $G$ of highest weight $\lambda$ will be denoted $\Delta(\lambda)$,
rather than $V(\lambda)$ as in \cite{JanB}.
Also, let $\nabla(\lambda)=\Delta(\lambda^\star)^* $ be the ``dual Weyl module" of highest weight $\lambda$.
Here $\lambda^\star:=-w_0(\lambda)$, where $w_0$ is the longest element in the Weyl group of $G$. In the notation of \cite{JanB}, $\nabla(\lambda)$ is denoted 
$H^0(\lambda)$.

A weight $\lambda\in X(T)$ is called $p$-regular (or just ``regular" since the prime $p$ will be understood as fixed)
provided that, for any root $\alpha$, $(\lambda+\rho,\alpha^\vee)\not\equiv 0$ mod $p$. In other words,
$\lambda$ does not lie on any reflecting hyperplane for the dot action of the affine $p$-Weyl group $W_p$
of $G$ on ${\mathbb E}:=X(T)\otimes_{\mathbb Z}{\mathbb R}$. Let $\Xreg$ denote the set of all regular dominant weights. Observe that $\Xreg\not=\emptyset\iff p\geq h$. A (necessarily dominant) weight $\lambda$ is called $p$-restricted
(or just ``restricted") provided, for any simple root $\alpha$, $0\leq (\lambda,\alpha^\vee)<p$. Let $X_1(T)$
be the set of restricted weights.

If $\lambda,\mu\in X(T)$, define  $\lambda\leq \mu$ provided $\mu-\lambda$ is a sum of positive roots.
In this way, any subset $\Gamma$ of $X(T)_+$ can be regarded as a poset by restricting $\leq$ to $\Gamma$. If
$\Gamma\subseteq\Lambda$ are posets, write 
 $\Gamma\trianglelefteq\Lambda$ if $\Gamma$ is an ideal in $\Lambda$ in the sense that $\lambda\in
 \Lambda$ and $\lambda\leq\gamma\in\Gamma$ implies that $\lambda\in\Gamma$. We will often
 work with finite ideals in the poset $\Xreg$ or even in $X(T)_+$. For example, 
let $\Jan=\{\lambda\in
X(T)_+\,|\,(\lambda+\rho,\alpha_0^\vee)\leq p(p-h+2)\}$ be the Jantzen region. (Here $\alpha_0$ is the maximal
short root in the root system $R$ of $T$ in $G$.)

 Let $F:G\to G$ be the Frobenius morphism of $G$, defined by its ${\mathbb F}_p$-structure. For a positive integer $r$ and a rational $G$-module $V$,  $V^{[r]}$ denotes the pull-back of $V$ through $F^r$.  
Given $\lambda=\lambda_0+p\lambda_1\in X(T)_+$ with $\lambda_0\in X_1(T)$, define
rational $G$-modules
\begin{equation}\label{pmodule}\Delta^p(\lambda):=L(\lambda_0)\otimes\Delta(\lambda_1)^{[1]}\,\,\text{\rm and}\,\,\nabla_p(\lambda):=L(\lambda_0)\otimes\nabla(\lambda_1)^{[1]}.\end{equation}
Both $\Delta^p(\lambda)$ and $\nabla_p(\lambda)$ are indecomposable $G$-modules. Furthermore, the universal mapping
properties of $\Delta(\lambda)$ and $\nabla(\lambda)$ \cite[p. 183]{JanB} easily imply that $\nabla_p(\lambda)$ is a $G$-submodule of $\nabla(\lambda)$
while $\Delta^p(\lambda)$ is a $G$-homomorphic image of $\Delta(\lambda)$. In particular, $\Delta^p(\lambda)$ (resp., $\nabla_p(\lambda)$) has head (resp., socle) isomorphic to $L(\lambda)$.

Let $G_1T$--mod (resp., $\Gmod$) be the category of finite dimensional rational $G_1T$-modules (resp., $G$-modules). For $\lambda_0\in X_1(T)$, let $\whQ_1(\mu_0)$ be the injective envelope in the category $G_1T$--mod of the irreducible
$G_1T$-module $\widehat L_1(\lambda_0)$ of highest weight $\lambda_0$. When $p\geq 2h-2$, the $G_1T$-module structure on $\whQ_1(\lambda_0)$ extends uniquely to a $G$-module structure \cite[Ch. 11]{JanB}. 
It is also the projective cover of $L(\lambda_0)$ in the category of rational $G$-modules consisting of modules having composition factors $L(\gamma)$ with
$\gamma\leq\lambda_0':=2(p-1)\rho+w_0\lambda_0$. 

We will usually assume that $p\geq 2h-2$, so that, for $\lambda_0\in X_1(T)$, $\whQ_1(\lambda_0)$
can be uniquely regarded as a rational $G$-module, which will be denoted by $Q^\sharp(\lambda_0)$.
For $\lambda=\lambda_0+p\lambda_1$, $\lambda_1\in X(T)_+$, define
indecomposable rational $G$-modules by putting
\begin{equation}\label{generalizedQsandPs}
\begin{cases}
Q^\sharp(\lambda):=Q^\sharp(\lambda_0)\otimes\nabla(\lambda_1)^{[1]}\\
P^\sharp(\lambda):=Q^\sharp(\lambda_0)\otimes\Delta(\lambda_1)^{[1]}.
\end{cases}
\end{equation}
Of course, both the restrictions $Q^\sharp(\lambda)|_{G_1T}$ and $P^\sharp(\lambda)|_{G_1T}$ are injective and projective (but not
indecomposable, unless $\lambda_1=0$). 

The category $\Gmod$ of finite dimensional rational $G$-modules has a natural contravariant duality ${\mathfrak d}:\Gmod\to\Gmod$ fixing the irreducible $G$-modules; see \cite{CPS0}. The duality $\mathfrak d$
stabilizes each $Q^\sharp(\lambda_0)$, and so ${\mathfrak d}Q^\sharp(\lambda)\cong P^\sharp(\lambda)$.

 \begin{prop}\label{Extvanishcriterion} Assume that $p\geq 2h-2$. If
  $V\in\Gmod$ has a $\Delta^p$-filtration (resp., a $\nabla_p$-filtration), then 
  $\Ext^1_G(V,Q^\sharp(\mu))=0$ (resp., $\Ext^1_G(P^\sharp(\mu),V)=0$), for all $\mu\in X(T)_+$.
\end{prop}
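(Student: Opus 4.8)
The plan is to reduce everything to the case of a single filtration section, then handle that case by a tensor-identity argument combined with the known homological properties of the building blocks $L(\mu_0)$, $\Delta(\mu_1)$, $\nabla(\mu_1)$ and $Q^\sharp(\mu_0)$. First I would observe that both $\Ext^1_G(V,Q^\sharp(\mu))$ and $\Ext^1_G(P^\sharp(\mu),V)$ are additive in $V$ and, by the long exact sequence in cohomology, behave well on short exact sequences; so if $0=V_0\subset V_1\subset\cdots\subset V_t=V$ is a $\Delta^p$-filtration, an easy induction on $t$ reduces the claim to the case $V=\Delta^p(\nu)$ for a single dominant weight $\nu=\nu_0+p\nu_1$ (and dually $V=\nabla_p(\nu)$ for the $\nabla_p$ statement).

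So the core is to prove $\Ext^1_G(\Delta^p(\nu),Q^\sharp(\mu))=0$ for all $\mu,\nu\in X(T)_+$. Write $\Delta^p(\nu)=L(\nu_0)\otimes\Delta(\nu_1)^{[1]}$ and $Q^\sharp(\mu)=Q^\sharp(\mu_0)\otimes\nabla(\mu_1)^{[1]}$. The key tool is the Frobenius-kernel/Lyndon--Hochschild--Serre spectral sequence
\[
\Ext^i_{G/G_1}\!\bigl(\Ext^j_{G_1}(\Delta^p(\nu),Q^\sharp(\mu))\bigr)\Longrightarrow \Ext^{i+j}_G(\Delta^p(\nu),Q^\sharp(\mu)),
\]
together with the tensor identity $\Ext^\bullet_{G_1}(L(\nu_0)\otimes W^{[1]},Q^\sharp(\mu_0)\otimes U^{[1]})\cong \Ext^\bullet_{G_1}(L(\nu_0),Q^\sharp(\mu_0))\otimes(W^*\otimes U)^{[1]}$, valid because $W^{[1]},U^{[1]}$ are trivial as $G_1$-modules. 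Now $Q^\sharp(\mu_0)$ restricts to the injective hull $\widehat Q_1(\mu_0)$ in $G_1T$--mod (this uses $p\geq 2h-2$), hence is injective as a $G_1$-module, so $\Ext^j_{G_1}(L(\nu_0),Q^\sharp(\mu_0))=0$ for $j>0$. Thus the spectral sequence collapses to the edge isomorphism
\[
\Ext^1_G(\Delta^p(\nu),Q^\sharp(\mu))\;\cong\;\Ext^1_{G}\!\bigl(k,\Hom_{G_1}(L(\nu_0),Q^\sharp(\mu_0))\otimes(\Delta(\nu_1)^*\otimes\nabla(\mu_1))^{[1]}\bigr).
\]
Here $\Hom_{G_1}(L(\nu_0),Q^\sharp(\mu_0))$, as a $G/G_1\cong G$-module (via Frobenius untwist), has a good ($\nabla$-)filtration — this is a standard fact for the $G$-module structure on $Q^\sharp(\mu_0)=\widehat Q_1(\mu_0)$ when $p\geq 2h-2$, equivalently that $Q^\sharp(\mu_0)$ has a filtration with sections $\nabla(\sigma_0)\otimes\nabla(\sigma_1)^{[1]}$-type pieces. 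Consequently the whole coefficient module is (after untwisting) a $G$-module with a good filtration, and $\Ext^1_G(k,-)=H^1(G,-)$ vanishes on modules with good filtrations. This gives the vanishing. The $\nabla_p$-statement is entirely dual: apply the contravariant duality $\mathfrak d$, which sends $\nabla_p(\nu)$ to $\Delta^p(\nu^\star)$-type modules and $P^\sharp(\mu)$ to $Q^\sharp(\mu)$, reducing it to the case just proved.

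The main obstacle I anticipate is not any one of these steps individually but making precise the claim that $\Hom_{G_1}(L(\nu_0),Q^\sharp(\mu_0))^{[-1]}$ has a good filtration as a $G$-module, and that tensoring it with $(\Delta(\nu_1)^*\otimes\nabla(\mu_1))^{[1]}\cong(\nabla(\nu_1^\star)\otimes\nabla(\mu_1))^{[1]}$ preserves this property — the latter being Mathieu/Donkin's theorem that the tensor product of modules with good filtrations again has a good filtration (which requires $p\geq 2h-2$ or a characteristic-free argument, but is available here). One must also be slightly careful that the spectral sequence really does degenerate enough in low degree: it is enough that $\Ext^j_{G_1}(L(\nu_0),Q^\sharp(\mu_0))=0$ for $j=1$, which follows from $G_1$-injectivity of $Q^\sharp(\mu_0)$, so only the $i=1,j=0$ term contributes to total degree $1$ and there is no differential into or out of it. Once these structural facts are in hand, the proof is a short formal assembly: induction on the filtration length, the tensor identity, injectivity of $Q^\sharp(\mu_0)|_{G_1}$, the good-filtration property of the resulting coefficient module, and Kempf-type vanishing $H^1(G,-)=0$ on good filtrations.
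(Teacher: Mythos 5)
Your proof is correct and rests on the same core ingredients as the paper's: reduce to a single filtration section $\Delta^p(\nu)$ by induction using the long exact sequence, apply the Hochschild--Serre spectral sequence for $G_1\trianglelefteq G$, and use the $G_1$-injectivity of $Q^\sharp(\mu_0)$ (here $p\geq 2h-2$ is needed for the $G$-structure on $\widehat{Q}_1(\mu_0)$) to kill every term with $j>0$ so that only $E_2^{1,0}$ survives in total degree one.

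Where you part ways with the paper is in the finish. The paper observes directly that $\Hom_{G_1}(L(\nu_0),Q^\sharp(\mu_0))$ is $\delta_{\nu_0,\mu_0}\cdot k$ as a $G/G_1$-module, because the $G_1$-socle of $Q^\sharp(\mu_0)$ is the simple module $L(\mu_0)$; after untwisting, the surviving term is therefore $\Ext^1_G(\Delta(\nu_1),\nabla(\mu_1))=0$ by the standard $\Delta$--$\nabla$ orthogonality (Jantzen, II.4.13), or is $0$ outright when $\nu_0\neq\mu_0$. You instead move everything to the coefficient side as $H^1(G,-)$ and invoke the good-filtration machinery: the claim that $\Hom_{G_1}(L(\nu_0),Q^\sharp(\mu_0))^{[-1]}$ has a good filtration, plus the Donkin/Mathieu theorem that tensor products of good-filtration modules have good filtrations, plus $H^1$-vanishing on such modules. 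This works, but it is heavier than needed: that $\Hom$-space is just $k$ or $0$, so the good-filtration assertion and the tensor theorem hold for trivial reasons, and the route through them obscures the elementary point. Also, your parenthetical ``equivalently, that $Q^\sharp(\mu_0)$ has a filtration with sections $\nabla(\sigma_0)\otimes\nabla(\sigma_1)^{[1]}$-type pieces'' is not in fact equivalent to the good-filtration statement about $\Hom_{G_1}(L(\nu_0),-)$ and is not the right justification; the one-line observation about the simple $G_1$-socle is what you want. With that substitution your argument coincides with the paper's.
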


\begin{proof}  
 We prove the $\Delta^p$-assertion, leaving the $\nabla_p$-case to the reader. First, the Hochschild-Serre spectral sequence gives, for $\mu\in X(T)_+$, an exact sequence
\begin{equation}\label{exactseq}
0\to H^1(G/G_1,\Hom_{G_1}(V,Q^\sharp(\mu))\to\Ext^1_G(V,Q^\sharp(\mu))\to\Ext^1_{G_1}(V,Q^\sharp(\mu))^{G/G_1}=0.\end{equation}
(The last term is zero because $Q^\sharp(\mu)=Q(\mu_0)\otimes\nabla(\mu_1)^{[1]}$ is an injective $G_1$-module.) Suppose that $V$ has a $\Delta^p$-filtration. To show that
$\Ext^1_G(V,Q^\sharp(\mu))=0$, for all $\mu\in X(T)_+$, we can assume that $V=\Delta^p(\lambda)$ for $\lambda=\lambda_0+p\lambda_1\in X(T)_+$. By (\ref{exactseq})
and \cite[(4.0.2)]{PS9},
$$\begin{aligned}\Ext^1_G(\Delta^p(\lambda),Q^\sharp(\mu)) &\cong \Ext^1_{G/G_1}(\Delta(\lambda_1)^{[1]}, \Hom_{G_1}(L(\lambda_0),Q(\mu_0))\otimes\nabla(\mu_1)^{[1]}) \\
&\cong \begin{cases}\Ext^1_G(\Delta(\lambda_1),\nabla(\mu_1))=0,\quad \lambda_0=\mu_0;\\
0,\quad \lambda_0\not=\mu_0\end{cases}\end{aligned}$$
as required. (Here we have used \cite[II.4.13 Prop.]{JanB} and the isomorphism $\Delta(\lambda_1)^*
\cong\nabla(\lambda_1^\star)$.)
\end{proof}
\begin{lem}\label{extensionlemma} Assume that $p\geq 2h-2$. For $\lambda,\mu\in X(T)_+$, we have
$$\nu\leq\mu\implies \Ext^1_G(P^\sharp(\mu),L(\nu))=0 =\Ext^1_G(L(\nu),Q^\sharp(\mu)).$$\end{lem}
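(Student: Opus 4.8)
The plan is to reduce both vanishing statements to the standard fact that $Q^\sharp(\mu)$ (resp.\ $P^\sharp(\mu)$) is, after restriction to $G_1$, an injective (resp.\ projective) $G_1$-module, combined with the linkage/weight restrictions imposed by the hypothesis $\nu\leq\mu$. I would treat the second vanishing $\Ext^1_G(L(\nu),Q^\sharp(\mu))=0$ first, since the first follows by applying the duality $\mathfrak d$: indeed $\mathfrak d$ fixes all irreducibles, sends $Q^\sharp(\mu)$ to $P^\sharp(\mu)$, and converts $\Ext^1_G(L(\nu),Q^\sharp(\mu))$ into $\Ext^1_G(P^\sharp(\mu),L(\nu))$, so the two assertions are equivalent.

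For $\Ext^1_G(L(\nu),Q^\sharp(\mu))$, I would run the Hochschild--Serre spectral sequence for $G_1\lhd G$ exactly as in the proof of Proposition \ref{Extvanishcriterion}: since $Q^\sharp(\mu)=Q(\mu_0)\otimes\nabla(\mu_1)^{[1]}$ is $G_1$-injective, the term $\Ext^1_{G_1}(L(\nu),Q^\sharp(\mu))^{G/G_1}$ vanishes, leaving
\[
\Ext^1_G(L(\nu),Q^\sharp(\mu))\cong H^1\bigl(G/G_1,\Hom_{G_1}(L(\nu),Q^\sharp(\mu))\bigr).
\]
Now I would analyze $\Hom_{G_1}(L(\nu),Q^\sharp(\mu))$ as a $G/G_1$-module. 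Writing $\nu=\nu_0+p\nu_1$ with $\nu_0\in X_1(T)$, one has $L(\nu)=L(\nu_0)\otimes L(\nu_1)^{[1]}$, so $\Hom_{G_1}(L(\nu),Q^\sharp(\mu))\cong \Hom_{G_1}(L(\nu_0),Q(\mu_0))\otimes \bigl(L(\nu_1)^*\otimes\nabla(\mu_1)\bigr)^{[1]}$. Since $Q(\mu_0)$ is the $G_1$-injective hull of $L(\mu_0)$, the first factor is nonzero exactly when $\nu_0=\mu_0$ (in which case it is one-dimensional and trivial as a $G_1$-module, hence a trivial $T$-module up to the appropriate $p$-twist), and is zero otherwise. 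In the nonzero case the $G/G_1\cong G$-module reduces (via $[1]$) to $L(\nu_1)^*\otimes\nabla(\mu_1)$, so
\[
\Ext^1_G(L(\nu),Q^\sharp(\mu))\cong \Ext^1_G\bigl(L(\nu_1),\nabla(\mu_1)\bigr).
\]

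To finish I would exploit the hypothesis $\nu\leq\mu$. Combined with $\nu_0=\mu_0$, this forces $\nu_1\leq\mu_1$. Then $\Ext^1_G(L(\nu_1),\nabla(\mu_1))$ vanishes: $\nabla(\mu_1)$ has a good filtration (being a single costandard module), so $\Ext^1_G(-,\nabla(\mu_1))$ vanishes on any module all of whose composition factors $L(\gamma)$ satisfy $\gamma\leq\mu_1$ and, more to the point, $\Ext^1_G(L(\nu_1),\nabla(\mu_1))=0$ whenever $\nu_1\not>\mu_1$ by the general $\Ext$-vanishing properties of standard and costandard modules (\cite[II.4.13 Prop.]{JanB}, together with the fact that $\Ext^1_G(L(\nu_1),\nabla(\mu_1))$ is controlled by composition factors of $\nabla(\mu_1)/L(\mu_1)$, all of which are strictly below $\mu_1$). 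Since $\nu_1\leq\mu_1$ precludes $\nu_1>\mu_1$, we get the desired vanishing, and applying $\mathfrak d$ yields $\Ext^1_G(P^\sharp(\mu),L(\nu))=0$ as well.

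The main obstacle I anticipate is the bookkeeping in the middle step: one must be careful that the hypothesis $\nu\leq\mu$ in $X(T)_+$ genuinely descends to $\nu_1\leq\mu_1$ once $\nu_0=\mu_0$ is known (this uses that $\nu-\mu=p(\nu_1-\mu_1)$ is then a sum of positive roots, and $p$ times a weight being a sum of positive roots implies that weight is itself a sum of positive roots), and that the $[1]$-twist is correctly tracked through the $\Hom_{G_1}$ computation so that no spurious weight shift survives. Everything else is a routine application of the spectral sequence and the quasi-hereditary $\Ext$-vanishing for $(\Delta,\nabla)$ pairs.
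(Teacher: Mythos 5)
Your argument is correct and follows essentially the same route as the paper: reduce via the Hochschild--Serre sequence from Proposition~\ref{Extvanishcriterion}, use that $Q^\sharp(\mu)$ is $G_1$-injective to kill the top term, note $\Hom_{G_1}(L(\nu_0),Q(\mu_0))\neq 0$ forces $\nu_0=\mu_0$, and land on $\Ext^1_G(L(\nu_1),\nabla(\mu_1))$. The one genuine difference is in the final step. The paper argues by contraposition: assuming $\Ext^1_G(L(\nu),Q^\sharp(\mu))\neq 0$, it concludes $\nu_0=\mu_0$ and $\Ext^1_G(L(\nu_1),\nabla(\mu_1))\neq 0$, then invokes the implication ``$\Ext^1_G(L(\nu_1),\nabla(\mu_1))\neq 0 \Rightarrow \nu_1>\mu_1$'' to get $\nu=p\nu_1+\mu_0>p\mu_1+\mu_0=\mu$, which contradicts $\nu\leq\mu$. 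That only uses the trivial direction ``$\nu_1-\mu_1\in\Z_{\geq 0}R$ implies $p(\nu_1-\mu_1)\in\Z_{\geq 0}R$.'' You instead argue directly, which means you need the \emph{harder} implication ``$\nu\leq\mu$ and $\nu_0=\mu_0 \Rightarrow \nu_1\leq\mu_1$,'' i.e., that if $p(\mu_1-\nu_1)$ is a non-negative integer combination of simple roots then so is $\mu_1-\nu_1$. You flag this correctly, and it is true here, but it is not a free step: it requires $pX(T)\cap\Z R=p\Z R$, which in turn needs $p$ coprime to $|X(T)/\Z R|$. That coprimality holds in all types under the standing assumption that $p\geq 2h-2$ is odd (one checks $A_n$: $p>n+1$; $B_n,C_n,E_7$: index $2$, $p$ odd; $D_n$: index $4$; $E_6$: index $3$, $p\geq 22$; $E_8,F_4,G_2$: index $1$), but if you take this route you should say this explicitly rather than leave it as a remark about an obstacle. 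The paper's contrapositive formulation is slightly cleaner precisely because it sidesteps this lattice-index verification. Otherwise your proof is sound, and your use of $\mathfrak d$ to obtain the $P^\sharp(\mu)$ statement is the same ``dual argument'' the paper leaves to the reader.
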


\begin{proof} Suppose that $\Ext^1_G(L(\nu),Q^\sharp(\mu))\not=0$. Write $\nu=\nu_0+p\nu_1$ with $\nu_0\in X_1(T)$ and $\nu_1\in X(T)_+$. Since $\soc_{G_1}Q(\mu_0)=L(\mu_0)$, the exact sequence (\ref{exactseq})
with $V=L(\nu)$
implies that $\nu_0=\mu_0$ and $\Ext^1_G(L(\nu_1),\nabla(\mu_1))\not=0$. Therefore, by \cite[(4.0.1)]{PS9}, $\nu_1>\mu_1$, so $\nu>\mu$. This proves
the lemma for $Q^\sharp(\mu)$. A similar argument establishes it for $P^\sharp(\mu)$.\end{proof} 

\begin{prop}\label{prop2.4} Assume that $p\geq 2h-2$. Let $\lambda=\lambda_0+p\lambda_1\in X(T)_+$, where $\lambda_0\in X_1(T)$ and $\lambda_1\in X(T)_+$.

(a) $P^\sharp(\lambda)$ has a $\Delta$-filtration and $Q^\sharp(\lambda)$ has a $\nabla$-filtration.

(b) $P^\sharp(\lambda)$ and $\Delta(\lambda)$ have $G_1$-heads isomorphic to $\Delta^p(\lambda)$ as rational
$G$-modules. Similarly, $Q^\sharp(\lambda)$ and $\nabla(\lambda)$ have $G_1$-socles
isomorphic to $\nabla_p(\lambda)$ as rational $G$-modules.
\end{prop}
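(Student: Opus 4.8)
The plan is to deduce both statements from the structure $Q^\sharp(\lambda_0)=Q^\sharp(\lambda_0)|_{G_1T}=\widehat Q_1(\lambda_0)$ together with the tensor identity and standard facts about $G_1T$-injectives. For part (a), recall from \cite[Ch. 11]{JanB} that, when $p\geq 2h-2$, the injective envelope $\widehat Q_1(\lambda_0)$ in $G_1T\text{--mod}$ has a filtration by $G_1T$-modules of the form $\widehat\nabla_1(\sigma)=\nabla(\sigma_0)\otimes\sigma_1 p$ (i.e.\ a ``$\widehat\nabla$-filtration'' as a $G_1T$-module); the key input is that as a $G$-module this refines to a genuine $\nabla$-filtration of $Q^\sharp(\lambda_0)$, since the $G_1T$-module $\widehat\nabla_1(\sigma)$, suitably induced up, contributes $\nabla$-sections. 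More precisely, $Q^\sharp(\lambda_0)$ is a tilting-type module: it has both a $\nabla$-filtration and a $\Delta$-filtration, the latter because $\mathfrak d$ fixes it. Granting this, tensoring a $\nabla$-filtration of $Q^\sharp(\lambda_0)$ with $\nabla(\lambda_1)^{[1]}$ and using that $\nabla(\sigma)\otimes\nabla(\lambda_1)^{[1]}$ has a $\nabla$-filtration (the tensor product of modules with good filtrations has a good filtration, \cite[II.4.21]{JanB}) gives a $\nabla$-filtration of $Q^\sharp(\lambda)=Q^\sharp(\lambda_0)\otimes\nabla(\lambda_1)^{[1]}$. Applying $\mathfrak d$ and using ${\mathfrak d}Q^\sharp(\lambda)\cong P^\sharp(\lambda)$ yields the $\Delta$-filtration of $P^\sharp(\lambda)$.

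For part (b), I would argue at the level of $G_1$ first and then lift. By definition $\operatorname{soc}_{G_1}\widehat Q_1(\lambda_0)=\widehat L_1(\lambda_0)$, and since $\widehat Q_1(\lambda_0)$ is $G_1$-injective, $\operatorname{soc}_{G_1}\widehat Q_1(\lambda_0)=L(\lambda_0)$ as $G_1$-modules; equivalently, applying $\mathfrak d$, the $G_1$-head of $Q^\sharp(\lambda_0)$ is $L(\lambda_0)$. Actually it is cleaner to start from $P^\sharp(\lambda_0)=Q^\sharp(\lambda_0)$ viewed as the projective cover of $L(\lambda_0)$ in the appropriate truncated category: then $\operatorname{head}_{G_1}P^\sharp(\lambda_0)=L(\lambda_0)=L(\lambda_0)\otimes k = \Delta^p(\lambda_0)$, and this head map is a map of $G$-modules since the $G_1$-head of a $G$-module is naturally a $G$-module. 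Tensoring with $\Delta(\lambda_1)^{[1]}$ (a module on which $G_1$ acts trivially) commutes with taking $G_1$-heads, so $\operatorname{head}_{G_1}P^\sharp(\lambda)=L(\lambda_0)\otimes\Delta(\lambda_1)^{[1]}=\Delta^p(\lambda)$ as $G$-modules. For $\Delta(\lambda)$: there is a surjection $P^\sharp(\lambda)\twoheadrightarrow\Delta(\lambda)$ coming from (a) and the fact that $\Delta(\lambda)$ is the top $\Delta$-section, hence $\operatorname{head}_{G_1}\Delta(\lambda)$ is a quotient of $\Delta^p(\lambda)$; on the other hand $\Delta^p(\lambda)$ is a $G$-quotient of $\Delta(\lambda)$ with $G_1$-head $\Delta^p(\lambda)$ itself (here one uses that $L(\lambda_0)\otimes\Delta(\lambda_1)^{[1]}$ restricted to $G_1$ is $L(\lambda_0)^{\oplus\dim\Delta(\lambda_1)}$, which is $G_1$-semisimple), forcing equality. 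The $\nabla_p$ and socle statements follow by applying $\mathfrak d$, using ${\mathfrak d}\Delta(\lambda)=\nabla(\lambda)$, ${\mathfrak d}\Delta^p(\lambda)=\nabla_p(\lambda)$, and ${\mathfrak d}Q^\sharp=P^\sharp$.

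The main obstacle I anticipate is part (a): verifying carefully that the $G_1T$-module $\widehat\nabla_1$-filtration of $\widehat Q_1(\lambda_0)$ promotes, under the unique $G$-structure available when $p\geq 2h-2$, to an honest $G$-module $\nabla$-filtration of $Q^\sharp(\lambda_0)$ rather than merely a $G_1T$-filtration. This is essentially \cite[II.11.x]{JanB}, but one must be precise about which sections $\widehat\nabla_1(\sigma)$ occur and how they assemble; equivalently, one must know $Q^\sharp(\lambda_0)$ is a (partial) tilting module. The cleanest route is probably to invoke Prop.\ \ref{Extvanishcriterion} and Lemma \ref{extensionlemma} in the contrapositive together with the known Ext-vanishing $\Ext^1_G(\Delta(\mu),Q^\sharp(\lambda_0))=0$ for all $\mu$ (Donkin's results / \cite{JanB}) to characterize the $\Delta$-filtration, but since those criteria here are being built up, I would instead cite \cite{JanB} Ch.\ 11 directly for the filtration property of $\widehat Q_1(\lambda_0)=Q^\sharp(\lambda_0)$ and then only do the elementary tensoring argument above. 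The rest of the proof is routine bookkeeping with $\mathfrak d$ and the tensor identity.
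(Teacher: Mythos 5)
Your proof is essentially correct and is, at bottom, the mirror image of the paper's argument (the paper proves the $\nabla$-side and ``leaves the other cases to the reader''; you prove the $\Delta$-side and dualize). Two points of comparison are worth flagging. In part (a), your derivation of the $\nabla$-filtration of $Q^\sharp(\lambda_0)$ by promoting the $G_1T$-module $\widehat\nabla_1$-filtration to a $G$-module good filtration is the step you yourself identify as shaky, and you end up falling back on citing \cite[Ch.~11]{JanB} for the tilting property. The paper sidesteps this by using a more concrete fact from the same source: $Q^\sharp(\lambda_0)$ is a $G$-direct summand of $\nabla(\xi)\otimes\St$ for some $\xi$, and then $\St\otimes\nabla(\lambda_1)^{[1]}\cong\nabla((p-1)\rho+p\lambda_1)$ (via \cite[II.3.19]{JanB}) exhibits $Q^\sharp(\lambda)$ directly as a summand of a tensor product of dual Weyl modules, so \cite[II.4.21]{JanB} applies immediately. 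That route avoids any discussion of lifting $G_1T$-filtrations and is cleaner. In part (b), your treatment of the ``easy half'' (the $G_1$-head of $P^\sharp(\lambda)$) is identical to the paper's dual computation of $\soc_{G_1}Q^\sharp(\lambda)$; the interesting difference is in how you produce the surjection $P^\sharp(\lambda)\twoheadrightarrow\Delta(\lambda)$. You extract it from the $\Delta$-filtration in (a), observing that the simple $G$-head of $P^\sharp(\lambda)$ is $L(\lambda)$ and hence the top section must be $\Delta(\lambda)$ (a point you should spell out: the top section of any $\Delta$-filtration surjects onto the head, forcing the top section's highest weight to be $\lambda$). The paper instead uses Lemma~\ref{extensionlemma} to kill the obstruction in $\Ext^1_G(\nabla(\lambda)/L(\lambda),Q^\sharp(\lambda))$ and extend $L(\lambda)\hookrightarrow Q^\sharp(\lambda)$ to $\nabla(\lambda)\hookrightarrow Q^\sharp(\lambda)$, which is logically independent of part (a). Both are valid; your version has the mild advantage of not invoking Lemma~\ref{extensionlemma} at all, at the cost of routing the logic through part (a). Either way the conclusion $\Delta^p(\lambda)\twoheadrightarrow\operatorname{head}_{G_1}\Delta(\lambda)\twoheadrightarrow\Delta^p(\lambda)$ closes the argument correctly.
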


\begin{proof} We prove (a) and (b) only for $Q^\sharp(\lambda)$ (and $\nabla(\lambda)$), leaving the 
other cases to the reader.

Proof of (a): By its construction, the $G$-module $Q^\sharp(\lambda_0)$
is a direct summand of $\nabla(\xi)\otimes\St$, for some $\xi\in X(T)_+$; see \cite[Ch. 11]{JanB}. Therefore,
using \cite[II.3.19]{JanB}, $Q^\sharp(\lambda)$ is a direct summand of 
$$\nabla(\xi)\otimes\St\otimes\nabla(\lambda_1)^{[1]}\cong
 \nabla(\xi)\otimes\nabla((p-1)\rho\otimes p\lambda_1).$$
 Since a tensor product of modules with a $\nabla$-filtration have a $\nabla$-filtration \cite[II.4.21]{JanB},  $Q^\sharp(\lambda)$ has a $\nabla$-filtration.
 
 Proof of (b): We have, writing $\soc$ for $\soc_{G_1}$,
 $$\soc Q^\sharp(\lambda)=\soc(Q(\lambda_0)\otimes \nabla(\lambda_1)^{[1]}
 \cong \soc(Q(\lambda_0))\otimes\nabla(\lambda_1)^{[1]}\cong L(\lambda_0)\otimes\nabla(\lambda_1)^{[1]}
 \cong \nabla_p(\lambda).$$

 Lemma \ref{extensionlemma} easily implies that the  inclusion $L(\lambda)\hookrightarrow Q^\sharp(\lambda)$ 
 of rational $G$-modules extends to a morphism $\nabla(\lambda)\hookrightarrow Q^\sharp(\lambda)$.
Therefore, $\soc_{G_1}\nabla(\lambda)\subseteq\soc_{G_1} Q^\sharp(\lambda)=\nabla_p(\lambda)$. On the other hand,
$\nabla_p(\lambda)\hookrightarrow \nabla(\lambda)$, so $\nabla_p(\lambda)\subseteq\soc\nabla(\lambda)$. Thus, $\nabla_p(\lambda)=\soc_{G_1}\nabla(\lambda)$.
\end{proof}

In addition to the $G$-modules $\Delta^p(\lambda)$ and $\nabla_p(\lambda)$, $\lambda\in X(T)_+$,
another family of modules, denoted $\rDelta(\lambda)$ and $\rnabla(\lambda)$, will play an important
role in this paper. These modules, first studied by Lin \cite{Lin}  (with the $\rDelta(\lambda)$ making a brief appearance in 
 Lusztig \cite{L1}), involve the theory of quantum groups at roots of unity.

Let $\zeta\in\mathbb C$ be a primitive  $p$th root of 1. Let $\sA={\mathbb Z}[v,v^{-1}]_{\mathfrak n}$, where ${\mathfrak n}=(v-1,p)$. We regard $\sA$ as a subring of the function field ${\mathbb Q}(v)$. Define $\sO':=\sA/(\phi_p)$, where $\phi_p=1+v+\cdots
+v^{p-1}\in{\mathfrak n}$. Thus, $\sO'$ is a DVR with residue field ${\mathbb F}_p$. It will be convenient to
enlarge $\sO'$ to be a complete DVR $\sO$ with residue field $k:=\overline{{\mathbb F}_p}$ and fraction field $K$.
Thus, $(K,\sO,k)$ will from now on be a $p$-modular system with
residue field the algebraically closed field $k$ as defined above. The natural mapping $\sA\to\sO$ takes
$v$ to $\zeta$.

Let $\widetilde U'_\zeta$ be the (Lusztig) $\sA$-form
of the quantum enveloping algebra ${\mathbb U}_v$ associated to the Cartan matrix of the root system $R$ over the function
field ${\mathbb Q}(v)$.  
 Let
$$\widetilde U_\zeta=\sO\otimes_\sA U'_\zeta/\langle K_1^p-1,\cdots, K_n^p-1\rangle.$$
Finally, set $U_\zeta=K\otimes_{\sO}\widetilde U_\zeta$, so that $\widetilde U_\zeta$ is an integral
$\sO$-form of the quantum enveloping algebra $U_\zeta$ at a $p$th root of unity. 
Put $\overline U_\zeta=\widetilde
U_\zeta/\pi\widetilde U_\zeta$, and let $I$ be the ideal in
$\overline U_\zeta$ generated by the images of the elements $K_i-1$,
$1\leq i\leq n$. By \cite[(8.15)]{L1},
\begin{equation}\label{hyperalgebra}
\overline U_\zeta/I\cong \Dist(G),\end{equation}
 the
distribution algebra of $G$ over $k$. 
 
The category $U_\zeta$--mod of
finite dimensional, integrable, type 1 $U_\zeta$-modules is a highest weight category (in the sense
of \cite{CPS-1}) with
irreducible (resp. standard, costandard) modules $L_\zeta(\lambda)$
(resp., $\Delta_\zeta(\lambda)$, $\nabla_\zeta(\lambda)$),
$\lambda\in X(T)_+$. For $\mu\in X(T)_+$, $\text{\rm
ch}\,\Delta_\zeta(\mu)=\text{\rm ch}\,\nabla_\zeta(\mu)=\chi(\mu)$ (Weyl's character formula). In the
sequel, it will be sometimes convenient to denote $L_\zeta(\lambda)$, $\Delta_\zeta(\lambda)$,
and $\nabla_\zeta(\lambda)$ by $L_K(\lambda)$, $\Delta_K(\lambda)$, and $\nabla_K(\lambda)$, 
respectively.

\begin{center} {\bf The Lusztig character formula (LCF)}\end{center}

\medskip In the following three paragraphs, we discuss the Lusztig character formula (needed in Lemma \ref{LCF}(c) and \S4 below). The first paragraph considers the case of the algebraic group $G$, the second considers
the case of the quantum enveloping algebra $U_\zeta$, while the third paragraph combines the two cases.

Assume that $p\geq h$, and let $$C^-:=\{x\in {\mathbb E}:=X(T)\otimes_{\mathbb Z}{\mathbb R}\,|\,-p<(\lambda+\rho,\alpha^\vee)<0,\quad\forall\alpha\in R_+\}$$ be the unique alcove for the affine Weyl group $W_p$ containing $-2\rho$.  Viewing $W_p$ as a Coxeter group with fundamental reflections in
the walls of $C^-$, for $y,x\in W_p$, let $P_{y,x}$ be the associated Kazhdan-Lusztig polynomial (in a variable $q=t^2$). Given a dominant weight $\lambda$, write $\lambda=x\cdot\lambda^-$,
for a unique $\lambda^-\in  \overline{C^-}$ and a unique $x\in W_p$ of minimal length. The
Lusztig character formula is the formal expression 
$$\chi_{\text{KL}}(\lambda):=\sum_y(-1)^{\ell(x)-\ell(y)}P_{y,x}(1)\ch \Delta(y\cdot\lambda^-),$$
where the sum is over all $y\in W_p$ satisfying $y\leq x$ and $y\cdot\lambda\in X(T)_+$. If
$\ch\,L(\lambda)=\chi_{\text{KL}}(\lambda)$, we say the LCF holds for $L(\lambda)$  (or for the dominant
weight $\lambda$). We will say that the LCF holds for $G$ if it holds for every restricted dominant weight
$\lambda$. Equivalently, using a standard Jantzen translation functor argument, the LCF holds for $G$
if and only if it holds for every regular restricted dominant weight).\footnote{The original Lusztig modular
conjecture \cite{L} posits that the LCF formula holds for all dominant weights $\lambda$ belonging to the Jantzen
region $\Jan$. If the LCF holds for $G$ and $p\geq 2h-3$, then the restricted weights are contained in the Jantzen
region and, using Kato \cite[p. 128]{Kato}, the Lusztig conjecture holds. In this paper, we will usually assume
that $p\geq 2h-2>2h-3$.} By \cite{AJS}, 
 the LCF
holds $G$ if $p\gg 0$ (depending on $R$, but specific bounds not given). More recently, explicit (and large!) bounds on $p$ insuring 
 the validity of the LCF for $G$ are available in Fiebig \cite[Cor. 9.9 and p. 135]{Fiebig}.  The original Lusztig conjecture required only a bound $p\geq h$.  However, a recent announcement of counterexamples 
 by Williamson \cite{W13} indicates this bound must be increased.\footnote{Replacing $h$ by the order of
 the Weyl group would not appear to contradict any of Williamson's proposed counterexamples. We note that the orders of
 the Weyl groups are still much smaller than the bounds obtained by Fiebig, which are huge.} 

Now consider the case of $U_\zeta$-mod and assume that $p>h$. For any $\lambda\in X(T)_+$,  then the irreducible, finite dimensional $U_\zeta$-module $L_\zeta(\lambda)$ has character satisfying $\ch\,L_\zeta(\lambda)=\chi_{\text{KL}}(\lambda)$. Thus, for $p>h$, the  ``LCF holds
for $U_\zeta$-mod."
  See \cite[\S7]{T} for a detailed discussion and
further references.

\medskip

As discussed in \cite{CPS7} (and earlier in \cite{Lin}), given $\lambda\in X(T)_+$,  there exist admissible lattices
$\widetilde\Delta_\zeta(\lambda)$ and
$\widetilde\nabla_\zeta(\lambda)$ for $\Delta_\zeta(\lambda)$ and
$\nabla_\zeta(\lambda)$, respectively, so that
$\widetilde\Delta_\zeta(\lambda)/\pi\widetilde\Delta_\zeta(\lambda)\cong
\Delta(\lambda)$ and
$\widetilde\nabla_\zeta(\lambda)/\pi\widetilde\nabla_\zeta(\lambda)\cong
\nabla(\lambda)$. The lattice $\widetilde\Delta_\zeta(\lambda)$
is generated as a $\widetilde U_\zeta$-module by a highest weight vector in $\Delta_\zeta(\lambda)$.
In the sequel, $\wDelta_\zeta(\lambda)$ is denoted simply by $\wDelta(\lambda)$ and 
$\Delta_\zeta(\lambda)$ by $\Delta_K(\lambda)$.

Given $\lambda\in X(T)_+$,  fix a highest weight vector $v^+\in
L_\zeta(\lambda)$. Then there is a unique admissible lattice
$ \wrDelta(\lambda)$ (resp., $\wrnabla(\lambda)$) of $L_\zeta(\lambda)$ which is
minimal (resp., maximal) with respect to all admissible lattices
$\widetilde L$ such that $\widetilde L\cap
L_\zeta(\lambda)_\lambda=\sO v^+$. For example, put
$\wrDelta(\lambda)=\widetilde
U_\zeta\cdot v^+$. By abuse of notation, $\wrDelta(\lambda)$
 (resp., $\wrnabla(\lambda)$) is called the minimal (resp., maximal) lattice of
$L_\zeta(\lambda)$. Any two ``minimal" (resp., ``maximal") lattices
are isomorphic as $\widetilde U_\zeta$-modules.

For $\lambda\in X(T)_+$, put
\begin{equation}\label{reduced}\rDelta(\lambda):=\wrDelta(\lambda)/\pi\wrDelta(\lambda)\,\,{\text{\rm and}}\,\,\rnabla(\lambda) :=
\wrnabla(\lambda)/\pi\wrnabla(\lambda).\end{equation}
 As noted above the LCF holds for $U_\zeta$--mod if $p>h$, giving
$$\ch\,\rDelta(\lambda)=\ch\,\rnabla(\lambda)=\chi_{\text{KL}}(\lambda).$$

\begin{prop}\label{Lin} (a) (\cite[Thm. 2.7]{Lin} or \cite[Prop. 1.78]{CPS7}) Assume that $\mu=\mu_0+p\mu_1
\in X(T)_+$ for $\mu_0\in X_1(T)$ and $\mu_1\in X(T)_+$.
Then $\rDelta(\mu)\cong\rDelta(\mu_0)\otimes\Delta(\mu_1)^{[1]}$ and $\rnabla(\mu)
\cong\rnabla(\mu_0)\otimes\nabla(\mu_1)^{[1]}$. 

(b) Assume that $p\geq 2h-2$. Then, given any $\mu\in X(T)_+$, $\rDelta(\mu)$ (resp., $\rnabla(\mu)$)
has a $\Delta^p$-filtration (resp., $\nabla_p$-filtration). In particular, $\Ext^1_G(V,Q^\sharp(\lambda))=0$
for all $\lambda\in X(T)_+$ and any $V\in\Gmod$ having a $\rDelta$-filtration.
\end{prop}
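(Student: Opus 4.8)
The plan is to deduce (b) from Proposition \ref{Lin}(a), reducing everything to a $p$-restricted highest weight, and to handle that case by descending to the small quantum group. The concluding ``In particular'' is formal: once $\rDelta(\mu)$ is known to have a $\Delta^p$-filtration, Proposition \ref{Extvanishcriterion} gives $\Ext^1_G(\rDelta(\mu),Q^\sharp(\lambda))=0$ for all $\lambda,\mu$, and since a $G$-module built by successive extensions of the $\rDelta(\mu)$'s inherits this vanishing through the long exact sequence for $\Ext^\bullet_G(-,Q^\sharp(\lambda))$, the claim holds for every $V\in\Gmod$ with a $\rDelta$-filtration.

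For the filtration statement, write $\mu=\mu_0+p\mu_1$ with $\mu_0\in X_1(T)$, so $\rDelta(\mu)\cong\rDelta(\mu_0)\otimes\Delta(\mu_1)^{[1]}$ by Proposition \ref{Lin}(a). Granting that $\rDelta(\mu_0)$ has a $\Delta^p$-filtration with sections $\Delta^p(\nu)=L(\nu_0)\otimes\Delta(\nu_1)^{[1]}$, the exact functor $-\otimes\Delta(\mu_1)^{[1]}$ produces a filtration of $\rDelta(\mu)$ with sections $L(\nu_0)\otimes\bigl(\Delta(\nu_1)\otimes\Delta(\mu_1)\bigr)^{[1]}$. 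Since a tensor product of modules with a Weyl filtration again has one (dual to \cite[II.4.21]{JanB}), $\Delta(\nu_1)\otimes\Delta(\mu_1)$ has a $\Delta$-filtration with sections $\Delta(\xi)$, and pulling these back through Frobenius and tensoring with $L(\nu_0)$ exhibits each of the displayed sections as $\Delta^p$-filtered, with sections $\Delta^p(\nu_0+p\xi)$. Refining gives a $\Delta^p$-filtration of $\rDelta(\mu)$, so it suffices to treat $\mu=\mu_0\in X_1(T)$, where $\Delta^p(\mu_0)=L(\mu_0)$.

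The restricted case is the crux. Because $\mu_0$ is $p$-restricted, $L_\zeta(\mu_0)$ remains irreducible over the small quantum group, and the quantum Frobenius generators of $\widetilde U_\zeta$ annihilate the highest weight vector $v^+$ (for $F_\alpha^{(p)}$ this is because $\mu_0-p\alpha$ is not a weight of $L_\zeta(\mu_0)$). Reducing modulo $\pi$ and using (\ref{hyperalgebra}), the subalgebra of $\Dist(G)$ generated by the $p$-th divided powers then acts through its augmentation on $\bar v^+$, so $\rDelta(\mu_0)=\Dist(G)\cdot\bar v^+=\Dist(G_1)\cdot\bar v^+$ is a cyclic $\Dist(G_1)$-module generated by a highest weight vector of weight $\mu_0$; in particular all its weights are of the form $\mu_0-\sum_{\beta>0}c_\beta\beta$ with $0\le c_\beta\le p-1$, and the highest weights $\nu$ of its composition factors satisfy $\nu\le\mu_0$ and lie in the $W_p$-linkage class of $\mu_0$. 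One then shows, from this $\Dist(G_1)$-module structure together with $p\ge 2h-2$, that $\rDelta(\mu_0)$ has a $\Delta^p$-filtration --- this is the substance of the analysis of the reduced modules initiated by Lin \cite{Lin} (see also \cite{CPS7}). I expect this step to be the main obstacle: $\rDelta(\mu_0)$ may have non-$p$-restricted composition factors $L(\nu)$ (those with $\nu_1\ne0$), so a composition series is not automatically a $\Delta^p$-filtration, and the $\Ext^1$-criterion of Theorem \ref{firstmainnum} is not yet available here; one must instead group the $\nu_1\ne0$ factors into $\Delta^p(\nu)$-sections compatibly with the dominance order, exploiting that $p\ge2h-2$ keeps $X_1(T)$ well inside the Jantzen region.
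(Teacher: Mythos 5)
Your reduction to the restricted case (via part~(a) and the fact that a tensor product of Weyl-filtered modules is Weyl-filtered, twisted through the Frobenius) and your formal deduction of the ``in particular'' clause both match the paper. However, you leave the crux unresolved --- you correctly flag it as the main obstacle but propose a ``grouping'' argument involving the $\Dist(G_1)$-structure that you do not carry out, and you conjecture that $\rDelta(\mu_0)$ may have non-restricted composition factors needing to be grouped into $\Delta^p$-sections.

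In fact no grouping is needed; the paper's argument at this point is far more elementary than you anticipate. For $\mu_0\in X_1(T)$ and $p\geq 2h-2$, any composition factor $L(\tau)$ of $\rDelta(\mu_0)$ satisfies $\tau\leq\mu_0$, hence $(\tau+\rho,\alpha_0^\vee)\leq(\mu_0+\rho,\alpha_0^\vee)\leq p(h-1)\leq p(p-h+2)$, placing $\tau$ in the Jantzen region. Writing $\tau=\tau_0+p\tau_1$ with $\tau_0$ restricted, this bound together with $(\tau_0+\rho,\alpha_0^\vee)\geq h-1$ forces $(\tau_1+\rho,\alpha_0^\vee)\leq p$, i.e.\ $\tau_1\in\overline{C}$. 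But on the closure of the bottom alcove $L(\tau_1)\cong\Delta(\tau_1)$, so by Steinberg's tensor product theorem
$$L(\tau)\cong L(\tau_0)\otimes L(\tau_1)^{[1]}\cong L(\tau_0)\otimes\Delta(\tau_1)^{[1]}=\Delta^p(\tau).$$
Thus \emph{every} composition factor of $\rDelta(\mu_0)$ is already a $\Delta^p$-module, and the composition series itself is the required $\Delta^p$-filtration. This is exactly where the hypothesis $p\geq2h-2$ earns its keep, and it renders your worry about $\nu_1\neq0$ factors moot: those factors do occur, but they are themselves of the form $\Delta^p(\nu)$.
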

\begin{proof} It suffices to prove statement (b).  By (a) and its notation, $\rDelta(\mu)\cong\rDelta(\mu_0)
\otimes\Delta(\mu)^{[1]}$. Any tensor product $\Delta(\gamma)^{[1]}\otimes\Delta(\delta)^{[1]}\cong
(\Delta(\gamma)\otimes\Delta(\delta))^{[1]}$ has a filtration with sections $\Delta(\omega)^{[1]}$. 
Thus, it is enough to show that $\rDelta(\mu_0)$ has a $\Delta^p$-filtration. Because $p\geq 2h-2$, any restricted dominant weight belongs to the Janzten
region, which consists of all $\lambda\in X(T)_+$ satisfying $(\lambda+\rho,\alpha_0^\vee)\leq p(p-h+2)$. 
If $L(\tau)$ is a composition factor of $\rDelta(\mu_0)$, then $\tau\leq\mu_0$, so $\tau$ also lies in the Jantzen
region. Therefore, writing $\tau=\tau_0+p\tau_1$ ($\tau_0\in X_1(T)$, $\tau_1\in X(T)_+$), it follows that
$\tau_1$ lies in the closure of the bottom $p$-alcove $C$ (see \cite[II, \S6.2]{JanB}).  Hence, $L(\tau_1)\cong\Delta(\tau_1)$,
so that $L(\tau)\cong L(\tau_0)\otimes L(\tau_1)^{[1]}\cong L(\tau_0)\otimes\Delta(\tau_1)^{[1]}\cong
\Delta^p(\tau)$, as required. The final assertion follows from Lemma \ref{Extvanishcriterion}(b). \end{proof}

Part (c) in the next result is an immediate consequence of part (a) of the above proposition and the
validity of the LCF for $U_\zeta$-mod as long as $p>h$. In fact,  if the LCF holds for $G$,  then $\rDelta(\lambda)=L(\lambda)$, $\lambda$ restricted.

\begin{cor} \label{LCF} (a) Let $\mu\in X(T)_+$, and let $\wM$ be a $\wU_\zeta$-lattice, and set
 $M:=(\wM)_k$ as usual.  Assume $\wM_K=L_\zeta(\mu)$ and that $L(\mu)$ is the $G$-socle of
 $M$. Then $M\cong\rnabla(\mu)$. 
 
 (b) Similarly, for $\mu\in X(T)_+$, and assume $\wM$ is a $\wU_\zeta$-lattice such that $\wM_K\cong L_\zeta(\mu)$ and such that the head of $M$ is $L(\mu)$ as a $G$-module. Then $M\cong\rDelta(\mu)$.

(c) Assume that $p>h$ is a prime. The LCF holds
for $G$ for all irreducible modules having restricted highest weights if and only if $\rDelta(\mu)=\Delta^p(\mu)$, for all $\mu\in X(T)_+$. \end{cor}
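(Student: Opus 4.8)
\emph{The plan} is to prove (a) and (b) by a lattice ``sandwich'' argument inside $L_\zeta(\mu)$ together with an $\sO$-rank count, and to deduce (c) directly from Proposition \ref{Lin}(a) and the validity of the LCF for $U_\zeta$-mod (available since $p>h$). For (a): because $\wM_K\cong L_\zeta(\mu)$ and $\dim_K L_\zeta(\mu)_\mu=1$, the $\mu$-weight lattice $\wM_\mu=\wM\cap L_\zeta(\mu)_\mu$ is free of rank one, so after scaling $\wM$ (which changes neither $M$ up to isomorphism nor the hypotheses) I may assume $\wM\cap L_\zeta(\mu)_\mu=\sO v^+$ for the fixed highest weight vector $v^+$ used to define $\wrnabla(\mu)$. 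As $\wM$ is then an admissible lattice with this property, the maximality characterizing $\wrnabla(\mu)$ gives $\wM\subseteq\wrnabla(\mu)$ inside $L_\zeta(\mu)$, so reduction mod $\pi$ yields a $G$-module map $\bar\iota\colon M\to\rnabla(\mu)$ sending the class of $v^+$ in $M$ to the class of $v^+$ in $\rnabla(\mu)$; the latter is nonzero since $\wrnabla(\mu)_\mu=\sO v^+$ forces $v^+\notin\pi\wrnabla(\mu)$. Now the class of $v^+$ in $M$ spans $M_\mu$ and hence lies in the simple $G$-socle $L(\mu)$ of $M$; were $\Ker\bar\iota$ nonzero it would contain $\soc_G M=L(\mu)$, killing the class of $v^+$, a contradiction. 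Thus $\bar\iota$ is injective, and since $\dim_k M=\dim_k\rnabla(\mu)$ (both equal $\dim_K L_\zeta(\mu)$), it is an isomorphism.

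For (b) I would run the dual argument. After the same normalization $v^+\in\wM$, whence $\wrDelta(\mu)=\wU_\zeta\cdot v^+\subseteq\wM$, and reduction mod $\pi$ gives a nonzero $G$-map $\bar\iota\colon\rDelta(\mu)\to M$ sending the class of $v^+$, a generator of $\rDelta(\mu)$, to a vector spanning $M_\mu$. Its image is a nonzero quotient of $\rDelta(\mu)$ and so has simple head $L(\mu)$ --- here I use the standard fact (\cite{Lin}, \cite{CPS7}) that $\rDelta(\mu)$ has simple head $L(\mu)$. If that image were a proper submodule of $M$, it would lie in $\rad_G M$, the unique maximal submodule of $M$ (unique because $M$ has simple head $L(\mu)$); but then the class of $v^+$ would lie in $\rad_G M$, contradicting that $M\twoheadrightarrow M/\rad_G M\cong L(\mu)$ is nonzero on the weight space $M_\mu$. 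Hence $\bar\iota$ is surjective, and the equality of $k$-dimensions makes it an isomorphism.

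For (c): assume the LCF holds for $G$ on restricted weights. For restricted $\mu$, the LCF for $U_\zeta$-mod gives $\ch\rDelta(\mu)=\chi_{\text{KL}}(\mu)=\ch L(\mu)$; since $\rDelta(\mu)$ has $L(\mu)$ as a quotient (simple head) and $\dim\rDelta(\mu)=\dim L(\mu)$, we get $\rDelta(\mu)\cong L(\mu)=\Delta^p(\mu)$, which is the ``In fact'' assertion following the statement. For arbitrary $\mu=\mu_0+p\mu_1$, Proposition \ref{Lin}(a) then gives $\rDelta(\mu)\cong\rDelta(\mu_0)\otimes\Delta(\mu_1)^{[1]}\cong L(\mu_0)\otimes\Delta(\mu_1)^{[1]}=\Delta^p(\mu)$. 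Conversely, if $\rDelta(\mu)=\Delta^p(\mu)$ for all dominant $\mu$, then for restricted $\mu$ (so $\mu_1=0$) we get $\rDelta(\mu)=\Delta^p(\mu)=L(\mu)$, hence $\ch L(\mu)=\ch\rDelta(\mu)=\chi_{\text{KL}}(\mu)$ by the LCF for $U_\zeta$-mod, so the LCF holds for $G$.

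\emph{The main obstacle} is not the bookkeeping above but the structural inputs it rests on: that $\rDelta(\mu)$ has simple head $L(\mu)$ and $\rnabla(\mu)$ has simple socle $L(\mu)$, and that an arbitrary $\wU_\zeta$-lattice $\wM$ with $\wM_K\cong L_\zeta(\mu)$ and $\wM_\mu=\sO v^+$ is genuinely trapped between $\wrDelta(\mu)$ and $\wrnabla(\mu)$. These must be read off carefully from the definitions of the minimal/maximal lattices and their mod-$\pi$ reductions; granted them, the $\sO$-rank counts close each case.
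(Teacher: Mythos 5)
Your proof is correct, and it works; but the route through (a) and (b) is genuinely different from the paper's. The paper proves (b) first: since $M$ has simple head $L(\mu)$, it is cyclic on a $\mu$-weight vector, and Nakayama's lemma lifts this to cyclicity of $\wM$; because $\wM_\mu$ has $\sO$-rank one, $\wM$ is then identified with the unique (up to isomorphism) cyclic $\wU_\zeta$-lattice, namely $\wrDelta(\mu)$, and hence $M\cong\rDelta(\mu)$. Part (a) is then dispatched by applying the contravariant duality $\widetilde{\mathfrak d}$ (which fixes irreducibles and swaps $\wrDelta$ with $\wrnabla$). You instead run two parallel sandwich arguments: normalize $\wM_\mu=\sO v^+$, then the minimality/maximality properties give $\wrDelta(\mu)\subseteq\wM\subseteq\wrnabla(\mu)$, and reduction mod $\pi$ plus the simple-head (resp.\ simple-socle) hypothesis forces the induced map to be surjective (resp.\ injective), with the $\sO$-rank count closing each case. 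The paper's approach is a bit leaner and actually yields the stronger lattice-level isomorphism $\wM\cong\wrDelta(\mu)$ in case (b); yours is self-contained and avoids invoking the duality $\mathfrak d$, at the small cost of needing to record that $\rDelta(\mu)$ has simple head $L(\mu)$ (which does hold, since $\wrDelta(\mu)=\wU_\zeta v^+$ is cyclic on a $\mu$-weight vector whose weight space has rank one). Your argument for (c) is exactly the paper's: LCF for $U_\zeta$-mod gives $\ch\rDelta(\mu_0)=\chi_{\text{KL}}(\mu_0)$; LCF for $G$ makes this $\ch L(\mu_0)$, so $\rDelta(\mu_0)\cong L(\mu_0)$ by the simple-head and dimension comparison, and Proposition~\ref{Lin}(a) handles general $\mu$; the converse is read off the same identities.
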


\begin{proof} As noted above, it is sufficient to address (a) and (b). We prove (b). By hypothesis, 
$M$ has irreducible head $L(\mu)$. Thus, $M$ is a cyclic module generated by a $\mu$-weight vector. Thus, $\wM$ is also cyclic, generated by a $\mu$-weight vector $v$. Since $\wM_K\cong L_\zeta(\mu)$, $\wM_\mu$ is an $\sO$-lattice of rank 1. Thus, $\wM$ is the unique, up to isomorphism, cyclic $\wU_\zeta$-lattice.
Therefore, $\wM\cong\wrDelta(\mu)$, and so $M\cong\rDelta(\mu)$, proving (b). Taking duals, proves
(a). \end{proof}

Now let $u_\zeta$ be the small quantum group associated to $U_\zeta$ and the Frobenius
map $F:U_\zeta\to U({\mathfrak g}_{\mathbb C})$ (the  universal
enveloping algebra of the complex simple Lie algebra ${\mathfrak g}_{\mathbb C}$ with root system $R$). Thus, $u_\zeta$ is a
normal subalgebra of $U_\zeta$ (as well as a Hopf subalgebra) of dimension $p^{\dim{\mathfrak g}_{\mathbb C}}$, and $U_\zeta//u_\zeta
\cong U({\mathfrak g})$. The algebra $u_\zeta$ also admits an $\sO$-form
$\widetilde u_\zeta$ whose image in $\Dist(G)$ is isomorphic to the restricted enveloping algebra $u=u({\mathfrak g})$. It is easy to see that, given
any $M\in U_\zeta$--mod, the $U_\zeta$-socle of $M$ agrees with the $u_\zeta$-socle of $M|_{u_\zeta}$.
Given $V\in U({\mathfrak g}_{\mathbb C})$--mod, let $V^{[1]}\in U_\zeta$--mod denote the pull-back of $M$ through the
Frobenius map $F$. (Although this notation conflicts with similar notation for $G$, it should not
cause confusion.)

Suppose $\Gamma\subset\Xreg$ is a finite non-empty ideal in the poset $\Xreg$, 
and let $U_\zeta{\text{\rm --mod}}[\Gamma]$ be the full
subcategory of $U_\zeta$--mod consisting of those modules with composition factors $L_\zeta(\lambda)$, $\lambda\in\Gamma$. Then $\Uzmod[\Gamma]$ is a
highest weight category with weight poset $\Gamma$, standard (resp., costandard, irreducible) modules $\Delta_\zeta(\lambda)$ (resp., $\nabla_\zeta(\lambda)$,
$L_\zeta(\lambda)$)
for $\lambda\in\Gamma$. If $J$ is the annihilator of $\Uzmod[\Gamma]$, then $\Uzmod[\Gamma]\cong U_\zeta/J$--mod and the algebra $A_{\zeta,\Gamma}=U_\zeta/J$
is a quasi-hereditary algebra with weight poset $\Gamma$. Further, putting $\wA_{\zeta,\Gamma}$ equal to the image of $\wU_\zeta$ in $A_{\zeta,\Gamma}$,
$A_\Gamma:=\wA_{\zeta,\Gamma}/\pi\wA_{\zeta,\Gamma}$ satisfies $A_\Gamma{\text{\rm --mod}}\cong\Gmod[\Gamma]$, the full subcategory of the $G$--mod consisting of finite dimensional rational $G$-modules
with composition factors $L(\gamma)$, $\gamma\in\Gamma$.

The image $\wA_{\zeta,\Gamma}$  of $\wU_\zeta$ in $A_{\zeta,\Gamma}$ is an $\sO$-subalgebra  $A_{\zeta,\Gamma}$, necessarily an
$\sO$-lattice. Furthermore, $\overline{\wA_{\zeta,\Gamma}}=A_{\Gamma}$, using (\ref{hyperalgebra}). In addition, all projective modules for $A_{\Gamma}$ lift uniquely to $\wA_{\zeta,\Gamma}$-lattices which are projective; see \cite{DS}.

It will generally be necessary to enlarge the poset $\Gamma$. Thus, let $\Lambda$ be a subposet in $\Xreg$  containing $\Gamma$ as an ideal, and with the property that 
\begin{equation}\label{w0}2(p-1)\rho+w_0\gamma_1 +p\gamma_1\in\Lambda,\quad\forall\gamma\in\Gamma,\end{equation}
where $\gamma=\gamma_0+p\gamma_1$, $\gamma_0\in X_1(T),\gamma_1
\in X(T)_+$. This implies that all the highest weights of the $G$-composition factors of $P^\sharp(\gamma)$, for
$\gamma\in\Gamma$, belong to $\Lambda$. The module $P^\sharp(\gamma)$ is defined as a $G$-module
when $p\geq 2h-2$, {\it a condition we now assume}; see (\ref{generalizedQsandPs}), where $\gamma
=\lambda$. The $G$-module $Q^\sharp(\gamma_0)$ is a projective indecomposable module in $p$-bounded subcategory of $\Gmod$ defined in 
\cite[II.11.11]{JanB}. Thus, for $\Gamma'$ the poset ideal associated to the regular weights in the $p$-bounded category, $Q^\sharp(\gamma_0)$ is a projective $A_{\Gamma'}$-module. As noted above, it lifts uniquely
to a projective $\wA_{\Gamma'}$-lattice $\widetilde Q^\sharp(\gamma_0)$. Now regard, $\widetilde Q^\sharp(\gamma_0)$
as a $\widetilde U_\zeta$-module, and define 
\begin{equation}\label{wP}\wP^\sharp(\gamma):=\wQ^\sharp(\gamma_0)\otimes\wDelta(\gamma_1)^{[1]}.\end{equation}
This is a $\wU_\zeta$-module, and also $\wA_\Lambda$-lattice, because of our requirements on $\Lambda$. 
Similarly, we can define a $\wA_\Lambda$-lattice (or a $\wU_{\zeta,\Lambda}$-module)
\begin{equation}\label{wQ}
\wQ^\sharp(\gamma):=\wQ^\sharp(\gamma_0)\otimes\widetilde\nabla(\gamma_1)^{[1]}\end{equation}

As a consequence of this discussion,  the following result holds.

\begin{prop}\label{prop2.5} Assume that $p\geq 2h-2$ is an odd prime. Let $\Gamma$ be a finite, non-empty ideal
in $\Xreg$ and let $\Lambda$ be as above. 

(a) The modules defined in (\ref{wP}) and (\ref{wQ}) satisfy
$$\begin{cases} \overline{\wP^\sharp(\gamma)}\cong P^\sharp(\gamma);\\
\overline{\wQ^\sharp(\gamma)  }\cong Q^\sharp(\gamma)\end{cases}$$
for all $\gamma\in\Gamma$, where $Q^\sharp(\gamma)$ and $P^\sharp(\gamma)$ are defined
as in (\ref{generalizedQsandPs}).

(b) For $\gamma\in\Gamma$, $\wP^\sharp(\gamma)$ has a $\wDelta$-filtration and $\wQ^\sharp(\gamma)$
has a $\widetilde\nabla$-filtration.\end{prop}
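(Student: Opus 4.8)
The plan is to deduce (a) from the exactness of reduction modulo $\pi$, and to prove (b) by transporting the argument used for Proposition \ref{prop2.4}(a) to the integral (quantum) setting.

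For (a), I would use that $-\otimes_\sO k$ is exact on $\sO$-lattices and commutes both with $\otimes_\sO$ and with the Frobenius pull-back (the latter because the integral Frobenius morphism is compatible with reduction, cf. (\ref{hyperalgebra})). By construction, $\wQ^\sharp(\gamma_0)$ is the unique projective $\wA_{\Gamma'}$-lattice lifting the projective module $Q^\sharp(\gamma_0)$, so $\overline{\wQ^\sharp(\gamma_0)}\cong Q^\sharp(\gamma_0)$; moreover $\overline{\wDelta(\gamma_1)}\cong\Delta(\gamma_1)$ and $\overline{\wtnabla(\gamma_1)}\cong\nabla(\gamma_1)$ by the defining property of these admissible lattices recalled before (\ref{reduced}), whence $\overline{\wDelta(\gamma_1)^{[1]}}\cong\Delta(\gamma_1)^{[1]}$ and $\overline{\wtnabla(\gamma_1)^{[1]}}\cong\nabla(\gamma_1)^{[1]}$. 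Applying $-\otimes_\sO k$ to the definitions (\ref{wP}) and (\ref{wQ}) and comparing with (\ref{generalizedQsandPs}) then gives $\overline{\wP^\sharp(\gamma)}\cong P^\sharp(\gamma)$ and $\overline{\wQ^\sharp(\gamma)}\cong Q^\sharp(\gamma)$.

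For (b), I would first note that $\wQ^\sharp(\gamma_0)$, being projective over the integral quasi-hereditary algebra $\wA_{\Gamma'}$, carries a $\wDelta$-filtration, and, since the duality ${\mathfrak d}$ fixes $Q^\sharp(\gamma_0)$ and lifts to the integral level, also a $\wtnabla$-filtration. More in the spirit of Proposition \ref{prop2.4}(a), one realizes $\wQ^\sharp(\gamma_0)$ as a direct summand of $\wtnabla(\xi)\otimes\widetilde{\St}$ for a suitable $\xi\in X(T)_+$, where $\widetilde{\St}=\wDelta((p-1)\rho)$ is the (quantum) Steinberg lattice; this lifts, over the complete ring $\sO$, the idempotent cutting out the corresponding summand of $\nabla_\zeta(\xi)\otimes\St_\zeta$ (the quantum analogue of \cite[Ch. 11]{JanB}), and, applying ${\mathfrak d}$, $\wQ^\sharp(\gamma_0)$ is likewise a summand of $\wDelta(\xi)\otimes\widetilde{\St}$. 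Tensoring with $\wtnabla(\gamma_1)^{[1]}$ (resp. $\wDelta(\gamma_1)^{[1]}$) and invoking the integral form of the quantum Steinberg tensor product theorem, $\widetilde{\St}\otimes\wtnabla(\gamma_1)^{[1]}\cong\wtnabla((p-1)\rho+p\gamma_1)$ (resp. $\widetilde{\St}\otimes\wDelta(\gamma_1)^{[1]}\cong\wDelta((p-1)\rho+p\gamma_1)$), exhibits $\wQ^\sharp(\gamma)$ as a summand of $\wtnabla(\xi)\otimes\wtnabla((p-1)\rho+p\gamma_1)$ and $\wP^\sharp(\gamma)$ as a summand of $\wDelta(\xi)\otimes\wDelta((p-1)\rho+p\gamma_1)$. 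Since tensor products of $\wtnabla$-filtered (resp. $\wDelta$-filtered) $\sO$-lattices are again $\wtnabla$-filtered (resp. $\wDelta$-filtered) --- the lattice version of \cite[II.4.21]{JanB}, available from the integral highest-weight theory of \cite{PS10} --- and a direct $\sO$-summand of such a lattice inherits the filtration, both assertions of (b) follow.

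The main obstacle is precisely this last step: one must know that the quantum Steinberg tensor identity and the statements ``a tensor product, or a direct summand, of a filtered lattice is filtered'' persist at the level of $\sO$-lattices, not merely after inverting $\pi$. I would dispose of this either by appealing directly to the integral quasi-hereditary machinery of \cite{PS10}, or, more softly, by noting that $\wP^\sharp(\gamma)$ (resp. $\wQ^\sharp(\gamma)$) is an $\wA_\Lambda$-lattice whose generic fibre has a $\Delta_K$-filtration (resp. $\nabla_K$-filtration) while its special fibre $P^\sharp(\gamma)$ (resp. $Q^\sharp(\gamma)$) has a $\Delta$-filtration (resp. $\nabla$-filtration) by Proposition \ref{prop2.4}(a), with the two families of filtration multiplicities forced to coincide by character considerations; the standard base-change criterion for integral quasi-hereditary algebras then upgrades this to a $\wDelta$- (resp. $\wtnabla$-) filtration of the lattice itself.
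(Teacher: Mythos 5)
Your part (a) argument is correct and matches what the paper leaves to the reader. For part (b) you offer two routes, and they are of quite different character. Your primary route---lifting $Q^\sharp(\gamma_0)\mid\nabla(\xi)\otimes\St$ and the Steinberg tensor identity to the integral quantum level, then invoking integral versions of ``tensor products and direct summands of $\wDelta$-filtered lattices are $\wDelta$-filtered''---is genuinely different from the paper's, and you rightly flag the danger: these integral tensor/summand facts are not free, and assembling them would amount to redoing a fair amount of \cite{PS10}. Note also a small but real slip there: over the complete DVR $\sO$, idempotents lift from the \emph{special} fiber $k$, not from the generic fiber $K$; the summand should be detected on the reduction $\nabla(\xi)\otimes\St$, not on $\nabla_\zeta(\xi)\otimes\St_\zeta$.

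Your ``more softly'' alternative is essentially the paper's route, but it elides the step that makes it work. The paper does \emph{not} argue by matching filtration multiplicities on the two fibers. Instead it works entirely on the special fiber: Proposition \ref{prop2.4}(a) gives $P^\sharp(\gamma)$ a $\Delta$-filtration, hence $\Ext^1_G(P^\sharp(\gamma),\nabla(\mu))=0$ by (the dual of) \cite[II.4.16(b)]{JanB}; this $\Ext^1$-vanishing over $A_\Lambda$ base-changes, via \cite[Lem.~1.5.2(c)]{CPSMem} and Nakayama, to $\Ext^1_{\wA_\Lambda}(\wP^\sharp(\gamma),\wnabla(\mu))=0$; and Proposition \ref{filtrationprop} in Appendix I is then precisely the tool that converts \emph{that} $\Ext^1$-vanishing into a $\wDelta$-filtration of the lattice. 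So the ``standard base-change criterion'' you gesture at is real, but it runs through $\Ext^1$-vanishing and Proposition \ref{filtrationprop}, not through character comparisons, which are neither needed nor sufficient on their own.
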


\begin{proof} Part (a) follows from the construction of $\widetilde P^\sharp(\gamma)$ and $\wQ^\sharp(\gamma)$. To see (b), first observe that by Proposition \ref{prop2.4} that $P^\sharp(\gamma)$ has a
$\Delta$-filtration.  Therefore, by (the dual version of) \cite[II.4.16(b)]{JanB}, $\Ext^1_{A_\Lambda}(P^\sharp(\lambda),\nabla(\mu))=
 \Ext^1_G(P^\sharp(\lambda),\nabla(\mu))=0$, for all $\mu\in\Lambda$. By a standard base change result \cite[Lem. 1.5.2(c)]{CPSMem},
 this implies that $\Ext^1_{\wA_\Lambda}(\wP^\sharp(\lambda),\wnabla(\mu))=0$, for all $\mu\in\Lambda$.
 Therefore, Proposition \ref{filtrationprop} in Appendix I (\S6) implies that $\wP^\sharp(\lambda)$ has a $\wDelta$-filtration. A dual argument works to show that each $\wQ^\sharp(\gamma)$ has a $\wnabla$-filtration. 
\end{proof}

\section{A summary of previous results; new results on tight modules}
\setcounter{equation}{0}

This section begins by summarizing various results from \cite{CPS1a}, \cite{DS}, \cite{PS10}. Some important (and new) facts concerning tight modules will be established. All this will be needed later  for the main results of the paper. 
Throughout fix an odd prime $p$ satisfying  $p\geq 2h-2$.   The size restriction on $p$ is only needed in order to make use of the graded results established in \cite{PS10}.

Let $\Lambda$ be a finite, nonempty ideal in the poset $\Xreg$ of regular weights in $X(T)_+$.    
Consider the $\sO$-algebra  $\wA_\Lambda=\wU_{\zeta,\Lambda}$ constructed in \S2. It is an $\sO$-lattice. When the poset $\Lambda$ is 
understood, denote $\wA_\Lambda$ simply by $\wA$. If $\emptyset\not=\Gamma\trianglelefteq
\Lambda$, there is a natural surjection $\wA_\Lambda\twoheadrightarrow\wA_\Gamma$. In this way, any $\wA_\Gamma$-module $\wM$ can be viewed, by inflation, as an $\wA_\Lambda$-module.
 Given two finite $\wA_\Gamma$-modules $\wM,\wN$,  inflation induces isomorphisms
\begin{equation}\label{Ext}\Ext^n_{\wA_\Gamma}(\wM,\wN)\cong
\Ext^n_{\wA_\Lambda}(\wM,\wN)\quad\forall n\geq 0.\end{equation}
These results follow from the general theory of quasi-hereditary algebras \cite{CPS1a} and its discussion
in the quantum case in \cite{DS} (the latter results are also recalled in \cite{PS10}). The algebras
$\wA_\Lambda$ are all split quasi-hereditary algebras (QHAs) in the sense of \cite{CPS1a}.

A similar result holds at the graded level. More precisely, consider the $\sO$-algebras $\gr\wA_\Lambda$ and $\gr\wA_\Gamma$. These algebras 
 are both (positively) graded QHAs with weight posets $\Lambda$ and $\Gamma$, respectively. There is a surjective homomorphism $\gr\wA_\Lambda\twoheadrightarrow\gr\wA_\Gamma$ which
induces (by restriction) an isomorphism
\begin{equation}\label{grExt}\Ext^n_{\gr\wA_\Gamma}(\wM,\wN)\cong\Ext^n_{\gr\wA_\Lambda}(\wM,\wN),\quad\forall n\geq 0.\end{equation}
for any two finite $\gr\wA_\Gamma$-modules $\wM,\wN$. In addition, these isomorphisms hold at the
level of graded $\grExt^\bullet$, when $\wM,\wN$ are graded $\gr\wA_\Gamma$-modules. Also,
if $M_K,N_K$ are $(\gr \wA_\Gamma)_K$-modules and if $M,N$ are $(\gr \wA_\Gamma)_k$-modules,
then
\begin{equation}\label{level K/k}
\begin{cases}\Ext^n_{(\gr\wA_\Gamma)_K}(M_K,N_K)\cong\Ext^n_{(\gr\wA_\Lambda)_K}(M_K,N_K);\\
\Ext^n_{(\gr\wA_\Gamma)_k}(M,N)\cong\Ext^n_{(\gr\wA_\Lambda)_k}(M,N),\end{cases}\quad\forall n\geq 0.\end{equation}
A closely related result is the isomorphism 
\begin{equation}\label{closelyrelatedresult}(\gr\wA_\Lambda)_\Gamma\cong\gr \wA_\Gamma,\end{equation}
 where $(\gr\wA_\Lambda)_\Gamma$ denotes the largest quotient algebra of $\gr\wA_\Lambda$, all of whose irreducible
modules have the form $L(\gamma)$ for $\gamma\in\Gamma$. 

All these quoted results in the previous paragraph follow from the (split)
quasi-heredity of $\gr\wA_\Lambda$ and the description of its standard modules (as the graded
modules $\gr\widetilde\Delta(\lambda)$, $\lambda\in\Lambda$) proved in \cite{PS10}; see especially Remark 3.18 and Theorem 5.3 there.

For the algebras $\wA_\Lambda$,  a key step in \cite{PS10} in proving that $\gr\wA_\Lambda$ is an integral QHA is  
showing that each $\gr\wA_\Lambda$-module $\gr\widetilde\Delta(\lambda)$, $\lambda\in\Lambda$, has  a simple head. As a consequence of this fact we also record the following result, using the
proof of \cite[Cor. 3.15]{PS10}.

\begin{thm}\label{gradedDeltas}     Let $\wN$ be a $\wA_\Lambda$-lattice which has a $\widetilde\Delta$-filtration. Then the graded $\gr\wA_\Lambda$-module $\gr\wN$ has a $\gr\widetilde\Delta$-filtration. In addition, the multiplicity of $\widetilde\Delta(\nu)$  
as a section of $\wN$ agrees with the multiplicity of $\gr\widetilde\Delta(\nu)$ as an ungraded section
of $\gr\wN$, or, equivalently, with the sum of the multiplicities of the various shifts of $\gr\widetilde\Delta(\lambda)$ as graded sections of $\gr\wN$.\end{thm}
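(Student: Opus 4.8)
The plan is to induct on the length $t$ of a $\wDelta$-filtration $0=\wN_0\subset\wN_1\subset\cdots\subset\wN_t=\wN$, with $\wN_i/\wN_{i-1}\cong\wDelta(\lambda_i)$. Using the standard reordering of $\wDelta$-filtrations available in a split quasi-hereditary algebra over $\sO$ (as in \cite{CPS1a}, \cite{DS}), I may assume $\lambda_t$ is maximal among $\lambda_1,\dots,\lambda_t$, so that $\wDelta(\lambda_t)$ is the top quotient and $\wN':=\wN_{t-1}$ is $\wDelta$-filtered of length $t-1$. The radical filtration of $\wN$ induces, by taking images, a filtration of the graded module $\gr\wN$ by graded $\gr\wA_\Lambda$-submodules $\overline{\wN_i}$, and an elementary diagram chase identifies the sections:
\[
\overline{\wN_i}/\overline{\wN_{i-1}}\;\cong\;\bigoplus_{n\ge0}\frac{\wN_i\cap\wrad^n\wN}{(\wN_{i-1}\cap\wrad^n\wN)+(\wN_i\cap\wrad^{n+1}\wN)}\;=:\;\gr_{\mathcal F}\wDelta(\lambda_i),
\]
the associated graded of $\wDelta(\lambda_i)=\wN_i/\wN_{i-1}$ with respect to the filtration $\mathcal F^\bullet$ it inherits from the radical filtration of $\wN$. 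The whole statement thus reduces to showing that, for each $i$, this section is a \emph{single} shifted standard module $\gr\wDelta(\lambda_i)(r_i)$ of the graded quasi-hereditary algebra $\gr\wA_\Lambda$; equivalently, that $\mathcal F$ coincides with the radical filtration of $\wDelta(\lambda_i)$ shifted by some $r_i\ge0$.

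One inclusion is formal: by tightness of $\wDelta(\lambda_i)$ (Corollary \ref{tightcor}) one has $(\wrad^n\wA_\Lambda)\wDelta(\lambda_i)=\wrad^n\wDelta(\lambda_i)$, and a short induction then gives $\mathcal F^{r_i+n}\supseteq\wrad^n\wDelta(\lambda_i)$, where $r_i:=\max\{m:\mathcal F^m=\wDelta(\lambda_i)\}$; hence $\gr\wDelta(\lambda_i)(r_i)$ maps onto the submodule of $\gr_{\mathcal F}\wDelta(\lambda_i)$ generated in its lowest degree $r_i$. The crux, which I expect to be the main obstacle, is the reverse inclusion $\mathcal F^{r_i+n}\subseteq\wrad^n\wDelta(\lambda_i)$ — that no ``spreading out'' of the filtration occurs. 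This is exactly where \cite{PS10} is indispensable: the key fact proved there is that the standard module $\gr\wDelta(\lambda_i)$ of $\gr\wA_\Lambda$ has \emph{simple head} $L(\lambda_i)$, and this rigidity — combined with the facts that $\gr_{\mathcal F}\wDelta(\lambda_i)$ has the same composition factor multiplicities as $\wDelta(\lambda_i)$, hence as $\gr\wDelta(\lambda_i)$, and receives the surjection from $\gr\wDelta(\lambda_i)(r_i)$ just noted — forces that surjection to be an isomorphism onto all of $\gr_{\mathcal F}\wDelta(\lambda_i)$. I would carry this out by reproducing, essentially verbatim, the argument in the proof of \cite[Cor.\ 3.15]{PS10}, where precisely this conclusion is drawn in the special case that $\wN$ is a projective $\wA_\Lambda$-lattice; that argument uses only the $\wDelta$-filtration and the simple-head property, so it transfers without change to an arbitrary $\wDelta$-filtered lattice.

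Granting the claim, the induction closes immediately: $\overline{\wN_{t-1}}$ has a $\gr\wDelta$-filtration by the inductive hypothesis applied to $\wN'$ together with the identifications $\overline{\wN_i}/\overline{\wN_{i-1}}\cong\gr\wDelta(\lambda_i)(r_i)$ for $i<t$, while $\gr\wN/\overline{\wN_{t-1}}\cong\gr\wDelta(\lambda_t)$; so $\gr\wN$ has a $\gr\wDelta$-filtration. The multiplicity assertion then needs no further argument: the $\gr\wDelta$-filtration of $\gr\wN$ constructed above has, as ungraded sections, exactly $\gr\wDelta(\lambda_1),\dots,\gr\wDelta(\lambda_t)$, each up to a nonnegative grading shift, so for every $\nu$ the number of sections isomorphic to a shift of $\gr\wDelta(\nu)$ equals $\#\{i:\lambda_i=\nu\}$, which is the multiplicity of $\wDelta(\nu)$ as a section of the $\wDelta$-filtration of $\wN$ — this multiplicity being independent of the chosen filtration by the general theory of quasi-hereditary algebras.
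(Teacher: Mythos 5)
Your approach is the same as the paper's: the paper gives no independent argument but simply records the theorem ``using the proof of \cite[Cor.\ 3.15]{PS10},'' together with the key input from \cite{PS10} that each $\gr\wDelta(\lambda)$ has simple head $L(\lambda)$. Your reduction is also sound — the induced filtration of $\gr\wN$ by images $\overline{\wN_i}$, the Zassenhaus-type identification of sections with $\gr_{\mathcal F}\wDelta(\lambda_i)$, the tightness of $\wDelta(\lambda_i)$ (although the right citation is Corollary \ref{cor3.9} rather than Corollary \ref{tightcor}, and you should note that the proof of that corollary rests only on \cite{PS10}, not on the present theorem, so there is no circularity in the exposition), and the resulting one-sided containment $\wrad^n\wDelta(\lambda_i)\subseteq\mathcal F^{r_i+n}$. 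The multiplicity assertion at the end is also handled correctly.

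The one caution is that the inline sketch of the crux — the sentence asserting that the simple-head property, the equality of composition-factor multiplicities, and the map $\phi:\gr\wDelta(\lambda_i)(r_i)\to\gr_{\mathcal F}\wDelta(\lambda_i)$ together ``force that surjection to be an isomorphism onto all of $\gr_{\mathcal F}\wDelta(\lambda_i)$'' — is not by itself a valid argument. By your own description, the image of $\phi$ is the $\gr\wA_\Lambda$-submodule of $\gr_{\mathcal F}\wDelta(\lambda_i)$ generated in lowest degree $r_i$; but the statement that this submodule is \emph{all} of $\gr_{\mathcal F}\wDelta(\lambda_i)$ is, via an elementary downward induction on the grade, literally equivalent to the desired equality $\wrad^n\wDelta(\lambda_i)=\mathcal F^{r_i+n}$. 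The composition-length count is no help until you have independently shown either injectivity or surjectivity of $\phi$ (and over $\sO$, even injectivity with matching lengths does not immediately give surjectivity on lattices). In other words, the ``combined with'' argument is circular as written. You do ultimately say you would ``reproduce, essentially verbatim, the argument in the proof of \cite[Cor.\ 3.15]{PS10},'' and that is the right thing to do; just be aware that the preceding heuristic would not survive being expanded into a real proof, and the rigidity one actually needs must be extracted from the argument in \cite{PS10}, not from the counting shortcut.
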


This leads to the following corollary.

\begin{cor}\label{ZeroDegreeQHA}Let $\wA_\Lambda$ be as above  and form the graded integral QHA $\gr\wA_\Lambda$
with weight poset $\Lambda$. Then $\wB:=(\gr\wA_\Lambda)_0$ (the term in grade 0 in $\gr\wA_\Lambda$)
is an integral QHA with weight poset $\Lambda$. It has 
standard (resp., costandard) modules $\wrDelta(\lambda)$ (resp., $\wrnabla(\lambda)$), $\lambda\in\Lambda$.\end{cor}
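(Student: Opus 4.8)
The plan is to deduce the quasi-heredity of $\wB=(\gr\wA_\Lambda)_0$ from that of $\gr\wA_\Lambda$ by exploiting the grading. First I would recall from \cite{PS10} (quoted above) that $\gr\wA_\Lambda$ is a positively graded integral QHA with weight poset $\Lambda$ and standard modules $\gr\wDelta(\lambda)$, each of which has simple head $L(\lambda)$. The zero-graded piece $\wB=(\gr\wA_\Lambda)_0$ is an $\sO$-subalgebra of $\gr\wA_\Lambda$, and it is an $\sO$-lattice (a direct summand of the lattice $\gr\wA_\Lambda$). Its irreducible modules over $k$ (equivalently, the simple $\wB_k$-modules) are again indexed by $\Lambda$: the composition $\gr\wA_\Lambda \to \wB$ obtained from the grading (or, more precisely, the fact that $\wB$ is the degree-$0$ part and every simple of $\gr\wA_\Lambda$ is concentrated in degree $0$) identifies $\wB_k\text{--mod}$'s simples with those of $(\gr\wA_\Lambda)_k$. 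Thus the weight poset is $\Lambda$, as claimed.

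Next I would identify the candidate standard modules. The key observation is that $\wrDelta(\lambda)$, the minimal admissible lattice of $L_\zeta(\lambda)$, carries a natural $\gr\wA_\Lambda$-module structure concentrated in degree $0$ — indeed by \cite[Thm. 5.3, Rem. 3.18]{PS10} one has $(\gr\wDelta(\lambda))_0 \cong \wrDelta(\lambda)$, since the degree-$0$ part of the radical filtration of $\wDelta(\lambda)$, after passing to the quantum/forced grading, is precisely the minimal lattice (this is how the modules $\rDelta(\lambda)$ "arise naturally" in the introduction). A module concentrated in degree $0$ for a positively graded algebra is the same thing as a module for the degree-$0$ subalgebra $\wB$, so $\wrDelta(\lambda)$ is a $\wB$-lattice. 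To check it is the standard module, I would verify: (i) $\wrDelta(\lambda)$ has simple head $L(\lambda)$ — this is inherited from the simple head of $\gr\wDelta(\lambda)$ by restricting to degree $0$; (ii) all other composition factors $L(\mu)$ satisfy $\mu < \lambda$ — again inherited; (iii) the projective cover $\wP_\wB(\lambda)$ of $L(\lambda)$ over $\wB$ has a filtration by $\wrDelta(\mu)$'s with $\wrDelta(\lambda)$ on top and the rest indexed by $\mu > \lambda$, with the appropriate $\sO$-freeness of the relevant $\Hom$ and $\Ext^1$ groups. For (iii), the cleanest route is to use that $\gr\wA_\Lambda$ is quasi-hereditary and take degree-$0$ parts: the degree-$0$ part of a $\gr\wDelta$-filtration of a graded projective $\gr\wA_\Lambda$-module, restricted to the degree-$0$ contributions, yields a $\wrDelta$-filtration of the corresponding $\wB$-projective, using that each $\gr\wDelta(\mu)$ has degree-$0$ part $\wrDelta(\mu)$ and that tensoring/restricting to degree $0$ is exact on the relevant filtered pieces (here Theorem \ref{gradedDeltas} and the compatibility of filtration multiplicities with graded shifts is the relevant input).

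For the costandard side, I would argue dually: $\gr^\diamond$ of the dual construction, or equivalently $\sO$-duality applied to $\wB^{\mathrm{op}}$, produces $\wrnabla(\lambda) = (\gr^\diamond\wnabla(\lambda))_0$ as the costandard $\wB$-modules, using the relation $(\gr^\diamond\wM)^* = \gr\wM^*$ noted in the Notation section. The integrality requirements in the definition of a split integral QHA — that $\Hom$ and $\Ext^1$ groups between standards, costandards, and their duals are $\sO$-free of the expected rank and that base change to $K$ and $k$ both yield the QHA over those fields — follow from the corresponding statements for $\gr\wA_\Lambda$ by restricting to degree $0$, together with the standard base-change machinery of \cite{CPS1a}, \cite{DS}.

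The main obstacle I anticipate is step (iii): showing rigorously that a degree-$0$ truncation of a $\gr\wDelta$-filtered graded projective is itself $\wrDelta$-filtered, and that no extensions are lost in the truncation. This is where one must be careful — a priori the degree-$0$ part of a filtered module need not be filtered by the degree-$0$ parts of the sections unless one knows the sections $\gr\wDelta(\mu)$ sit in nonnegative degrees with degree-$0$ part exactly $\wrDelta(\mu)$ and that the connecting maps respect this. The positivity of the grading on $\gr\wA_\Lambda$ and the explicit identification of $\gr\wDelta$ from \cite{PS10} are exactly what make this go through, but it is the step requiring the most care.
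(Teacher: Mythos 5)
Your proposal is correct and follows the paper's proof in all essentials: identify the $\wB$-projectives as $(\gr\wP(\lambda))_0$, apply Theorem~\ref{gradedDeltas} to produce a graded $\gr\wDelta$-filtration of $\gr\wP(\lambda)$ with top section $\gr\wDelta(\lambda)$ and the remaining sections indexed by $\nu>\lambda$, then truncate to degree~$0$ to obtain a $\wrDelta$-filtration with the correct poset ordering. The degree-$0$ truncation concern you flag is resolved, as you say, by positivity of the grading (taking the degree-$0$ piece is exact on positively graded $\gr\wA_\Lambda$-modules); the paper dispatches this point silently and then confirms $\gr\wDelta(\lambda)_0\cong\wrDelta(\lambda)$ by observing that $\wDelta(\lambda)/\wrad\wDelta(\lambda)$ is $\wU_\zeta$-cyclic on a $\lambda$-weight vector, hence the minimal admissible lattice.
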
 

\begin{proof} The projective indecomposable modules for $\wB$ are just the modules $\gr\wP(\lambda)_0$, $\lambda\in\Lambda$,  where
$\wP(\lambda)$ is the projective cover of the irreducible $\wA_\Lambda$-module $L(\lambda)$.  By Theorem
\ref{gradedDeltas}, the graded module $\gr\wP(\lambda)$ has a graded filtration by modules $\gr\widetilde\Delta(\nu)$.   Since the head of $\wP(\lambda)$ is $L(\lambda)$, the top section of this filtration must
be $\gr\widetilde\Delta(\lambda)$, the only module $\gr\widetilde\Delta(\lambda)$ with head $L(\lambda)$. 
By the multiplicity assertion of Theorem \ref{gradedDeltas}, all other sections $\gr\widetilde\Delta(\nu)$  of the filtration satisfy $\nu>\lambda$. 

Now pass to grade 0 to obtain a filtration of $\gr\wP(\lambda)_0$ with top section $\gr\wDelta(\lambda)_0$
and lower sections $\gr\wDelta(\nu)_0$ for $\nu>\lambda$. Clearly, any composition factor of $\gr\wDelta(\nu)_0$ has the form $L(\tau)$, $\tau\leq \lambda$, with $L(\lambda)$ occurring just once, in the head.
It follows that $\wB$ is an integral  QHA, as required.

Finally, $\gr\wDelta(\lambda)_0\cong\wDelta(\lambda)/\wrad\Delta(\lambda)$ is generated by a $\lambda$-weight vector as a $\wU_\zeta$-module, so gives (the unique, up to isomorphism) minimal $\wU_\zeta$-lattice
in the irreducible module $((\gr\wDelta(\lambda)_0)_K\cong L_\zeta(\lambda)$. That is,
$\gr\wDelta(\lambda)_0\cong\wrDelta(\lambda)$.
 \end{proof}
 
There are natural dualities $\wdelta_K$ on $U_\zeta$--mod and $\delta=\wdelta_k$ on $G$-mod which
fix irreducible modules. Both are compatible by base change with a duality $\wdelta$ on the category of
$\wU_\zeta$- or $\wA$-lattices. 
   It takes $\wP^\sharp(\lambda)$ to
$\wQ^\sharp(\lambda)$, and also satisfies ${\widetilde{\mathfrak d}}\wDelta(\lambda)\cong\wnabla(\lambda)$ and ${\mathfrak d}(\gr\wP^\sharp(\lambda))\cong\gr^\diamond\wQ^\sharp(\lambda)$, for all $\lambda\in\Lambda$. 

 \begin{cor}\label{cor3.3} Let $\Gamma\trianglelefteq\Lambda$ satisfy condition (\ref{w0}) for all
 $\gamma\in\Gamma$, so that, if $\gamma\in\Gamma$,  then  $\wP^\sharp(\gamma)$ and $\wQ^\sharp(\gamma)$ ) belong to the category $\wA_\Lambda$-mod. Then, for $\gamma\in\Gamma$,  $\gr\wP^\sharp(\gamma)$ (resp., $\gr^\diamond \wQ^\sharp(\gamma)$) has,
 as a graded $\gr\wA_\Lambda$-module, a $\gr\wDelta$-filtration (resp., $\gr\wnabla$-filtration). Thus,
 there is a filtration of $\gr\wP^\sharp(\gamma)$ (resp., $\gr^\diamond \wQ^\sharp(\gamma)$) as a graded $\gr\wA_\Lambda$-module with sections of the
 form $\gr\wDelta(\mu)(r)$ (resp., $\gr\wnabla(\mu)(-r)$),  $r\in\mathbb N$, $\mu\in\Lambda$. \end{cor}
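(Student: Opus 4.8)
The plan is to deduce everything from Proposition~\ref{prop2.5}(b), Theorem~\ref{gradedDeltas}, and the contravariant duality ${\mathfrak d}$ on graded $\gr\wA_\Lambda$-lattices recorded just before the corollary.

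First I would treat $\gr\wP^\sharp(\gamma)$. Since condition~(\ref{w0}) holds for every $\gamma\in\Gamma$, the lattice $\wP^\sharp(\gamma)$ of~(\ref{wP}) genuinely lies in $\wA_\Lambda$--mod, so $\gr\wP^\sharp(\gamma)$ is a positively graded $\gr\wA_\Lambda$-lattice. By Proposition~\ref{prop2.5}(b), $\wP^\sharp(\gamma)$ has a $\wDelta$-filtration as an $\wA_\Lambda$-lattice, so Theorem~\ref{gradedDeltas} (applied with $\wN=\wP^\sharp(\gamma)$) shows that $\gr\wP^\sharp(\gamma)$ has a $\gr\wDelta$-filtration; by the multiplicity clause of that theorem this filtration may be chosen by graded submodules, with graded sections the shifts $\gr\wDelta(\mu)(r)$, $\mu\in\Lambda$, $r\in\Z$. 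Because $\gr\wP^\sharp(\gamma)$ is concentrated in degrees $\geq 0$ while each $\gr\wDelta(\mu)$ is concentrated in degrees $\geq 0$ with $\gr\wDelta(\mu)_0\neq 0$ its head (Corollary~\ref{ZeroDegreeQHA}), any shift $\gr\wDelta(\mu)(r)$ occurring as a subquotient must satisfy $r\geq 0$, i.e.\ $r\in\N$. This gives the statement for $\gr\wP^\sharp(\gamma)$.

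For $\gr^\diamond\wQ^\sharp(\gamma)$ I would apply ${\mathfrak d}$. It is contravariant and exact, carries $\gr\wP^\sharp(\gamma)$ to $\gr^\diamond\wQ^\sharp(\gamma)$, interchanges the positively graded construction $\gr(-)$ with the negatively graded $\gr^\diamond(-)$ (item~(4) of the Notation), sends each standard $\gr\wDelta(\mu)$ to the corresponding costandard $\gr\wnabla(\mu)$ (using ${\widetilde{\mathfrak d}}\wDelta(\mu)\cong\wnabla(\mu)$), and reverses grading shifts, ${\mathfrak d}(N(r))\cong({\mathfrak d}N)(-r)$. Hence the graded $\gr\wDelta$-filtration of $\gr\wP^\sharp(\gamma)$ dualizes to a filtration of $\gr^\diamond\wQ^\sharp(\gamma)$ by graded submodules with sections $\gr\wnabla(\mu)(-r)$, $\mu\in\Lambda$, $r\in\N$, the bound $r\geq 0$ now coming from the fact that $\gr^\diamond\wQ^\sharp(\gamma)$ lives in degrees $\leq 0$. (Equivalently, one could argue symmetrically from the $\wnabla$-filtration of $\wQ^\sharp(\gamma)$ supplied by Proposition~\ref{prop2.5}(b) via the $\gr^\diamond$-analogue of Theorem~\ref{gradedDeltas}.)

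The only genuinely delicate point is the grading bookkeeping: I must make sure Theorem~\ref{gradedDeltas} really yields a filtration by \emph{graded} submodules---so that the shifts $\gr\wDelta(\mu)(r)$ are meaningful sections rather than merely the ungraded modules $\gr\wDelta(\mu)$---and that ${\mathfrak d}$ does interchange $\gr$ with $\gr^\diamond$ and reverse the shift index as claimed. Both are already contained in the results quoted above (respectively the multiplicity clause of Theorem~\ref{gradedDeltas} and the displayed properties of ${\mathfrak d}$ together with $(\gr^\diamond\wM)^*=\gr(\wM^*)$), so once these are granted the proof is just the concatenation of Proposition~\ref{prop2.5}(b), Theorem~\ref{gradedDeltas}, and the duality ${\mathfrak d}$.
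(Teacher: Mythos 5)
Your proof is correct and matches the paper's own argument: the paper likewise reduces to the $\wP^\sharp(\gamma)$ case via the duality ${\mathfrak d}$ (its ``remarks before the corollary''), then combines Proposition~\ref{prop2.5}(b) with Theorem~\ref{gradedDeltas}. Your extra bookkeeping on why the shifts $r$ lie in $\N$ and how ${\mathfrak d}$ interchanges $\gr$ with $\gr^\diamond$ simply makes explicit what the paper leaves tacit.
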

 
 \begin{proof} By remarks before the corollary, it suffices to consider the case of $\wP^\sharp(\gamma)$. By Proposition \ref{prop2.5}(b),
 $\wP^\sharp(\gamma)$ has a $\wDelta$-filtration.
 Now apply Theorem \ref{gradedDeltas} to complete the proof.\end{proof}

\medskip
\begin{center}{\bf The subalgebra $\bold\wfa$ and the modules $\bold\wgr M$}\end{center}
\smallskip
  We make the following assumptions throughout the remainder of this section.

\begin{assumptions}\label{assumptions} The prime $p$ is assumed to be odd and $\geq 2h-2$. $\Lambda$ is a finite, non-empty ideal in $\Xreg$, which contains the set $\Xreg\cap X_1(T)$ of regular restricted weights. In addition, if $\lambda\in \Xreg\cap X_1(T)$,   then $2(p-1)\rho+w_0\lambda\in\Lambda$. Let $\wA=\wA_\Lambda$.  \end{assumptions}

 Let $\wfa$ be the image of the (integral) small quantum group $\widetilde u_\zeta$ in $\wA_\Lambda$. Our assumptions
on $\Lambda$ imply that $\wfa_K$ is isomorphic to the projection $u'_{\zeta,K}$ of $u_{\zeta,K}$ on its regular blocks. For more details/discussion, see \cite[\S5]{PS10}. Also, 
\begin{equation}\label{radicals}(\rad^n\wfa_K)\wA_K=
  \rad^n\wA_K,\quad\forall n\in{\mathbb N}, \end{equation}
by \cite[Lem. 8.1, Lem. 8.3]{PS9}. In addition, we have $\wA_K(\rad^n\wfa_K)=\rad^n\wA_K$,
  for $n\geq 0$; see \cite[Theorem 6,1(b)]{PS10} (the condition on ``fatness" there is not necessary, since
  radicals are preserved under surjective homomorphisms).
  
  Also, $\fa:=\wfa_k$ is isomorphic (through the projection $(\widetilde u_\zeta)_k\to\wfa_k=\fa$) to the (direct) sum of the regular blocks in the universal enveloping algebra $u$ of the Lie
  algebra of $G$. See \cite[fn. 17]{PS10} for a discussion.

  In the following, let $\wrad^n\wfa:=\wfa\cap\rad^n\wfa_K$ (as discussed indirectly in item (6) in the notation of \S2), and put $\gr\wfa:=\bigoplus_{n\geq 0}\wrad^n\wfa/\wrad^{n+1}\wfa$. Consider the decreasing filtration 
 \begin{equation}\label{decreasing}\wF^\bullet: \wfa\supseteq\wrad\wfa\supseteq\wrad^2\wfa\supseteq\cdots
 \end{equation}
 of $\wfa$. Thus, $\wF^n=\widetilde \rad^n\wfa$. Given an $\wfa$-module $\wM$ (which could be an
 $\fa$-module $M$\footnote{In fact, we mostly use $\wgr\wM$ when $\wM=M$ is an $A$-module, where $A=\overline{\wA}$. For $\wfa$ lattices $\wM$, $\wgr\wM$ is poorly behaved because of the possibility of torsion, unless $\wM$ is
 a tight lattice, in which case $\wgr\wM=\gr\wM$.}), we define a
 graded $\gr\wfa$-module $\wgr \wM$ by setting 
 \begin{equation}\label{wgr} \wgr\wM:=\bigoplus_{n\geq 0}\wF^n\wM/\wF^{n+1}\wM=\bigoplus_{n\geq 0}
 (\wrad^n\wfa)\wM/(\wrad^{n+1}\wfa)\wM.\end{equation}
 In particular, $\wgr\wfa=\gr\wfa$, and it follows that $\wgr\wM$ is a graded module for $\gr\wfa$.

 It is useful to observe the following lemma. 

\begin{lem}\label{pretight} Let $\wM$ be an $\wfa$-lattice. There is a natural map 
$\wgr\wM\to\gr\wM$ of $\gr\wfa$-modules. The lattice $\wM$ is $\wfa$-tight if and
only if this map is surjective. Moreover, if the map is surjective, then it is an isomorphism
of $\gr\wfa$-modules, and there is a physical equality
$\wgr\wM=\gr\wM$.  
   \end{lem}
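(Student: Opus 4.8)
The plan is to unwind the definitions and exhibit the natural map explicitly, then characterize surjectivity. First I would recall that $\wM$, being an $\wfa$-lattice, sits inside $\wM_K$, and for each $n$ we have $(\wrad^n\wfa)\wM \subseteq \wrad^n\wM := \wM\cap\rad^n\wM_K$; indeed, $(\wrad^n\wfa)\wM \subseteq (\rad^n\wfa_K)\wM_K = \rad^n\wM_K$ (using $\wM\subseteq\wM_K$ and $\wrad^n\wfa\subseteq\rad^n\wfa_K$), and also $(\wrad^n\wfa)\wM\subseteq\wM$, so it lands in the intersection. This gives, for each $n$, an inclusion $\wF^n\wM\hookrightarrow\wrad^n\wM$ of $\sO$-submodules of $\wM$, compatible with $n\mapsto n+1$. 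Passing to successive quotients yields the natural degree-preserving map $\wgr\wM\to\gr\wM$ of $\gr\wfa$-modules announced in the statement (that it is $\gr\wfa$-linear is immediate since the filtration $\wF^\bullet$ is a $\wfa$-module filtration by the very definition $\wF^n = \wrad^n\wfa$, and passing to $\gr$ turns this into a $\gr\wfa$-action).

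Next I would prove the tightness criterion. By definition, $\wM$ is $\wfa$-tight precisely when $(\wrad^n\wfa)\wM = \wrad^n\wM$ for all $n\geq 0$, i.e.\ when every inclusion $\wF^n\wM\hookrightarrow\wrad^n\wM$ constructed above is an equality. If all these inclusions are equalities, then trivially the induced map on graded pieces is an isomorphism, hence surjective, and in fact $\wgr\wM$ and $\gr\wM$ have literally the same underlying $\sO$-modules in each degree---the ``physical equality'' claimed. Conversely, suppose the natural map $\wgr\wM\to\gr\wM$ is surjective. I would argue by induction on $n$ that $\wF^n\wM = \wrad^n\wM$. The base case $n=0$ is the equality $\wM=\wM$. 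For the inductive step, assume $\wF^n\wM = \wrad^n\wM$; surjectivity of the map in degree $n$ says exactly that $\wF^n\wM + \wrad^{n+1}\wM = \wrad^n\wM$, but since $\wF^n\wM = \wrad^n\wM \supseteq \wrad^{n+1}\wM$ by the inductive hypothesis, this is automatic and gives no new information---so I must instead run the induction using surjectivity in degree $n+1$ relative to the already-known equality in degree $n$. Concretely: $\wrad^{n+1}\wM/\wrad^{n+2}\wM$ is the degree-$(n+1)$ piece of $\gr\wM$, and its preimage generators live in $\wF^{n+1}\wM$; surjectivity gives $\wrad^{n+1}\wM = \wF^{n+1}\wM + \wrad^{n+2}\wM$. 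Then I would iterate, or invoke completeness/finiteness: since $\wM$ is a finitely generated module over the complete (or at least Noetherian) $\sO$ and $\wrad^m\wM\subseteq\pi\wrad^{m-?}\dots$ eventually---more cleanly, $\bigcap_n\wrad^n\wM = 0$ because $\bigcap_n\rad^n\wM_K = 0$ in the finite-dimensional $\wA_K$-module $\wM_K$---a standard Nakayama/filtration argument shows $\wrad^{n+1}\wM = \wF^{n+1}\wM + \wrad^{n+2}\wM$ for all iterates forces $\wrad^{n+1}\wM = \wF^{n+1}\wM$.

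To make that last point rigorous I would phrase it as: the filtrations $\{\wF^m\wM\}_m$ and $\{\wrad^m\wM\}_m$ are both exhaustive and separated (separated because $\bigcap_m\wrad^m\wM = \wM\cap\bigcap_m\rad^m\wM_K = 0$ and $\wF^m\wM\subseteq\wrad^m\wM$), and the hypothesis says the inclusion of filtered objects $\wF^\bullet\wM\hookrightarrow\wrad^\bullet\wM$ induces a surjection on associated graded modules; a surjection on $\gr$ between separated, exhaustive filtrations is an isomorphism, hence $\wF^m\wM = \wrad^m\wM$ for every $m$. This is the standard ``control by the graded'' lemma and is where I would be most careful, since one genuinely needs separatedness (the torsion subtlety flagged in the paper's footnote about badly-behaved $\wgr\wM$ is exactly the failure mode, but here both sides are $\sO$-submodules of the torsion-free $\wM$, so separatedness holds automatically). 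The main obstacle is therefore not conceptual but making sure the graded-control argument is invoked with the correct hypotheses---that the map is induced by an honest inclusion of filtrations on a common separated object---rather than between abstractly isomorphic graded modules; once that is pinned down, the equivalence and the physical equality follow formally.
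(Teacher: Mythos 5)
Your construction of the natural map and your argument that $\wfa$-tightness forces surjectivity, isomorphism, and the physical equality $\wgr\wM=\gr\wM$ all match the paper's reasoning, and you correctly diagnose that the naive upward induction for the converse goes nowhere. The problem is the ``control by the graded'' lemma you then invoke. As you state it --- a surjection on associated graded between two separated, exhaustive filtrations of a common module is an isomorphism --- it is false. Take $M=\prod_{i\geq 0}\mathbb Z$, let $G^n$ be the sequences vanishing in coordinates $<n$, and set $F^0=M$, $F^n=\bigl(\bigoplus_i\mathbb Z\bigr)\cap G^n$ for $n\geq 1$. Both filtrations are separated and the induced map on each graded piece is the $a_n$-coordinate projection, which is onto (hit by $e_n\in F^n$), yet $F^1\subsetneq G^1$ since $(0,1,1,1,\dots)\notin F^1$. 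The mechanism of failure is exactly the one your iteration exposes: surjectivity in degree $n$ gives only $G^n=F^n+G^{n+1}$, hence $G^n=F^n+G^{n+m}$ for every $m$, and to pass to the limit you need the filtration to be complete with $F^n$ closed --- or, much more simply, bounded (eventually zero) --- not merely separated.

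In the setting of the lemma boundedness is free and is what actually drives the argument. Since $\wA_K$ (and hence $\wfa_K$) is a finite-dimensional $K$-algebra, $\rad^N\wA_K=0$ for $N\gg 0$, so $\wrad^N\wM=\wM\cap\rad^N\wM_K=0$ and a fortiori $\wF^N\wM=(\wrad^N\wfa)\wM=0$. Once both filtrations vanish from some $N$ on, the clean finish is the paper's downward induction: surjectivity in degree $N-1$ reads $\wF^{N-1}\wM+\wrad^N\wM=\wrad^{N-1}\wM$, i.e.\ $\wF^{N-1}\wM=\wrad^{N-1}\wM$, and one descends one degree at a time using the equality already established one step higher. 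So replace ``separated'' by ``bounded'' in the graded-control step, or simply run the descending induction explicitly; the rest of your write-up is sound.
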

   
\begin{proof}  There is an evident, natural inclusion
\begin{equation}\label{tightn}(\wrad^n\wfa)\wM\hookrightarrow\rad^n\wM_K\cap\wM\end{equation} 
for all $n\geq 0$, by the definition above of the left hand side.  The  term $\rad^n\wM_K$ on the right hand side may be viewed as constructed in $\wA_K$-mod or in $\wfa_K$-mod, using (\ref{radicals}).  This defines a natural map $\wgr\wM\to\gr\wM$. 
When $\wM$ is $\wfa$-tight, the inclusion (\ref{tightn}) is an equality (by the $\wfa$-version of (\ref{rad})).
 This gives a physical equality between $\wgr\wM$ and $\gr\wM$ as $\gr\wfa$-modules (and certainly
 a surjection).

Conversely, assume the above natural map is surjective. Choose $n\gg 0$ so that (\ref{tightn}) is an equality;
for example, we can guarantee that both sides are 0. Using the fact that $(\wgr \wM)_{n-1}$ maps onto
$(\gr\wM)_{n-1}$, we
can conclude that equality in (\ref{tightn}) also holds for $n-1$. The converse now follows by an evident induction.
\end{proof}   
  We will see later in Corollary \ref{tightcor}  that the following lemma  holds if $\Lambda$ is replaced by any poset
  ideal. However, the lemma is used implicitly in the proof of the corollary.
  
 \begin{lem}\label{tight}Let $\wP$ be a projective indecomposable module for  $\wA=\wA_\Lambda$.
 Then $\wP$ is $\wfa$-tight.  Equivalently, $\wgr\wP=\gr\wP$.\end{lem}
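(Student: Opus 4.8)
The plan is to reduce the statement to a fact about $\wfa_K$-modules, namely that the projective indecomposable $\wA_K$-modules, viewed as $\wfa_K$-modules, have radical series which are ``induced'' from the radical series of $\wfa_K$ in a compatible way. Concretely, by Lemma \ref{pretight}, it suffices to show that the natural map $\wgr\wP\to\gr\wP$ is surjective, equivalently that $(\wrad^n\wfa)\wP=\wrad^n\wP=\rad^n\wP_K\cap\wP$ for all $n\ge 0$. Since $\wP$ is projective, it is automatically $\wA$-tight, so $\wrad^n\wP=(\wrad^n\wA)\wP$. Thus the claim is the equality $(\wrad^n\wfa)\wP=(\wrad^n\wA)\wP$, i.e. that $\wA$-tightness and $\wfa$-tightness coincide for $\wP$. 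The inclusion $(\wrad^n\wfa)\wP\subseteq(\wrad^n\wA)\wP$ is clear since $\wfa\subseteq\wA$; the reverse inclusion is what must be proved.

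First I would pass to $K$. Because $\wP$ is a lattice and $\wP_K$ is $\wA_K$-projective, and because $\wrad^n\wP=(\wrad^n\wA)\wP=(\rad^n\wA_K)\wP_K\cap\wP$, it is enough to show $(\rad^n\wfa_K)\wP_K=(\rad^n\wA_K)\wP_K$ as submodules of $\wP_K$, and then intersect back with $\wP$; since $\wP$ is $\sO$-pure in $\wP_K$ and $(\wrad^n\wfa)\wP$ is the $\sO$-span inside $\wP_K$ of $(\rad^n\wfa_K)\wP_K$ intersected with $\wP$ (one must check this purity/span compatibility, but it is formal given that $\wfa,\wA,\wP$ are all $\sO$-lattices and $\wfa$ is pure in $\wA$), the $K$-level equality will give the $\sO$-level equality. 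Now at the $K$ level we use (\ref{radicals}) in the strong form recorded in the text: $(\rad^n\wfa_K)\wA_K=\rad^n\wA_K$ and $\wA_K(\rad^n\wfa_K)=\rad^n\wA_K$. Applying the first of these to $\wP_K$: $(\rad^n\wA_K)\wP_K=(\rad^n\wfa_K)\wA_K\wP_K=(\rad^n\wfa_K)\wP_K$, since $\wA_K\wP_K=\wP_K$. This gives exactly the desired equality $(\rad^n\wfa_K)\wP_K=(\rad^n\wA_K)\wP_K$, and hence, after descending to $\sO$ as above, $(\wrad^n\wfa)\wP=(\wrad^n\wA)\wP=\wrad^n\wP$ for all $n$. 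By Lemma \ref{pretight} this means $\wP$ is $\wfa$-tight and $\wgr\wP=\gr\wP$.

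The main obstacle I anticipate is the descent from the $K$-level equality to the $\sO$-level equality: one needs that $(\wrad^n\wfa)\wP$, which is defined intrinsically over $\sO$ as $(\wfa\cap\rad^n\wfa_K)\wP$, equals $(\rad^n\wfa_K)\wP_K\cap\wP$. The inclusion ``$\subseteq$'' is immediate. For ``$\supseteq$'' I would argue as follows: $\wP$ being $\wA$-projective (hence $\wfa$-projective, since $\wfa$ is a subalgebra over which $\wA$ is... actually one must be careful here — $\wA$ need not be $\wfa$-projective), so instead I would use the tightness of $\wP$ as an $\wA$-lattice together with the chain of equalities $\wrad^n\wP=(\wrad^n\wA)\wP$ and then show $(\wrad^n\wA)\wP\subseteq(\wrad^n\wfa)\wP$ directly: pick a generator set; the key input is that $\wrad^n\wA=\wrad^n\wfa\cdot\wA+\pi^{?}(\cdots)$ — this is where the purity of $\wfa$ in $\wA$ and the two-sided radical identities of \cite{PS10} must be combined. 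Since the text explicitly warns (footnote after (\ref{wgr})) that $\wgr$ is well-behaved precisely for tight lattices, and since the two-sided equality $(\rad^n\wfa_K)\wA_K=\rad^n\wA_K=\wA_K(\rad^n\wfa_K)$ is quoted, I expect the intended argument is exactly this combination, with the projectivity of $\wP$ ensuring $\wA$-tightness so that one only has to reconcile the $\wA$- and $\wfa$-radical filtrations of $\wP$, which reduces to the already-quoted radical identities.
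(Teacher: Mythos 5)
Your reduction via Lemma \ref{pretight} is the right framing, and you correctly identify the crux: you need the integral equality $(\wrad^n\wfa)\wP=\wrad^n\wP$, not merely the $K$-level identity $(\rad^n\wfa_K)\wP_K=(\rad^n\wA_K)\wP_K$, and you are right to be suspicious that the descent from $K$ to $\sO$ does not come for free. That suspicion is where the proposal stops short of an actual proof. The direct route you sketch --- show $(\wrad^n\wA)\wP\subseteq(\wrad^n\wfa)\wP$ by proving $\wrad^n\wA=(\wrad^n\wfa)\wA$ --- is circular: that equality is precisely Corollary \ref{Aistight}(a), and in the paper that corollary is \emph{deduced from} Lemma \ref{tight} (``$\wA$ is a direct sum of projective indecomposables which are all $\wfa$-tight by Lemma \ref{tight}''). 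The two-sided identities (\ref{radicals}) are statements about $\wfa_K$ and $\wA_K$; they say nothing about how the lattices $\wrad^n\wfa\subseteq\wfa$ and $\wrad^n\wA\subseteq\wA$ sit inside one another, and purity of $\wfa$ in $\wA$ is not enough to bridge that gap, because $(\wrad^n\wfa)\wP$ need not be pure in $\wP$ a priori while $\wrad^n\wP$ is.

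The idea you are missing is to \emph{enlarge the poset}. The paper replaces $\Lambda$ by a larger ideal $\Lambda'$ --- concretely the $p^r$-bounded weights of \cite[II.11]{JanB} for $r\gg 0$ --- and writes $\wP=\wP'_\Lambda$ as the truncation of a projective indecomposable $\wP'$ for $\wA_{\Lambda'}$, where $\overline{\wP'}$ is a $G_r$-projective indecomposable. The point is that $\wP'$ can be arranged to be $\wfa$-\emph{projective}, so $\wP'$ is $\wfa$-tight for free (by the remark in notation item (5)), and hence $\wgr\wP'=\gr\wP'$ by Lemma \ref{pretight}. One then feeds this into a commutative square comparing $\wgr$ and $\gr$ of $\wP'$ and of $\wP'_\Lambda=\wP$, using the isomorphism $(\gr\wA_{\Lambda'})_\Lambda\cong\gr\wA_\Lambda$ from (\ref{closelyrelatedresult}) to see that the natural map $\gr\wP'\to\gr(\wP'_\Lambda)$ is (after truncating to $\Lambda$) surjective, and the map $\wgr\wP'\to\wgr(\wP'_\Lambda)$ is surjective by definition of $\wgr$. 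Chasing the diagram gives surjectivity of $\wgr\wP\to\gr\wP$, and a second application of Lemma \ref{pretight} finishes. So your intuition that $\wfa$-projectivity is what is really needed is correct; the trick is that one cannot get it for $\wP$ itself, but one can get it for a $\wfa$-projective cover $\wP'$ over a sufficiently large poset, and then truncate back down using the structural result (\ref{closelyrelatedresult}) from \cite{PS10}.
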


 \begin{proof}
 If $\Lambda'$ is a larger poset containing $\Lambda$ as a
 ideal, we can write $\wP=\wP'_\Lambda$ where $\wP'$ is a projective indecomposable for the quasi-hereditary algebra $\wA'$ associated with the
 larger poset $\Lambda'$. By construction the algebra $\wfa$ means the same in $\wA$ and in $\wA_{\Lambda'}$. It is easy to arrange that $\wP'$ is $\wfa$-projective. (For example, take $\Lambda'$
 to be the poset of $p^r$-bounded weights in the sense of \cite[II.11]{JanB} and take $\overline{\wP'}$
 to be a $G_r$-projective indecomposable. Here $r$ is taken large enough so that $\Lambda\subseteq\Lambda'$.)

 Thus, $\wP'$ is $\wfa$-tight. Consider the commutative diagram
 
 \medskip
 \hskip1in\begin{picture}(130,80)
\put(1,55){$ \gr\wP' $}
\put(45,55){$(\gr\wP')_\Lambda$}
\put(104,55){$\gr(\wP'_{\Lambda})$}
\put(0,0){$\wgr \wP'$}
\put(99,0){$\wgr(\wP'_\Lambda)$}

\put(26,3){\vector(1,0){70}}

\put(10,15){\vector(0,1){35}}
\put(115,15){\vector(0,1){35}}

\put(23,59){\vector(1,0){17}}
\put(26,59){\vector(1,0){17}}

\put(82,59){\vector(1,0){17}}
\put(86,61){$\sim$}
\end{picture} 
  \medskip
  
  \noindent
  The right hand map on the upper horizontal row is an isomorphism using (\ref{closelyrelatedresult}), after changing the names
 of the posets.  The left hand map on the upper horizontal row is surjective by its definition. The left hand vertical map is an isomorphism by Lemma \ref{pretight}. Thus, the right hand vertical map is a surjection. Now apply Lemma \ref{pretight} again. \end{proof}
 
 \begin{cor}\label{Aistight} Let $\wA=\wA_\Lambda$.
 
 (a)  $\wA$ is a tight $\wfa$-module, i.~e.,
 \begin{equation}\label{tightequal}
 (\wrad^n\wfa)\wA=\wrad^n\wA=\rad^n\wA_K\cap\wA=(\rad^n\wfa_K)\wA_K\cap\wA,\quad \forall n\geq 0. 
 \end{equation}
  Thus, $\wgr\wA=\gr\wA$ as (graded) $\sO$-modules.  In particular,
 $\wgr\wA$ is naturally an $\sO$-algebra, isomorphic (and even equal) to $\gr\wA$. 
   
 (b) For $n\geq 0$, $\wA(\wrad^n\wfa)=(\wrad^n\wfa)\wA$.  This also (directly) gives $\wgr\wA$ an $\sO$-algebra
 structure, necessarily agreeing with that in part (a) above. In addition, for any $\wA$-module $\wM$,
 $\wgr\wM$ is a natural $\gr\wA=\wgr\wA$-module. If $\wM$ is $\wA$-lattice, then the natural map $\wgr\wM\to\gr\wM$ of $\gr\wfa$-modules  in Lemma \ref{pretight} is a map of $\gr\wA$-modules. 
 
 (c) $\wgr A$ is a $k$-algebra, naturally isomorphic to $(\wgr\wA)_k$. If $\wM$ is any $\wA$-module, then
 $(\wgr\wM)_k$ is naturally as $\wgr A$-module. In particular, if $M$ is an $A$-module, then $\wgr M$
 is naturally a $\wgr A$-module.
 \end{cor}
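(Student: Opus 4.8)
The plan is to handle the three parts in order, treating (a) as the one place where genuinely new input is consumed and (b), (c) as formal consequences. The real mathematical content --- the radical identities (\ref{radicals}) (and their right-handed companion recorded alongside them) and the $\wfa$-tightness of projective indecomposable $\wA$-modules, which is Lemma \ref{tight} --- is already available, so the task is mostly to assemble these pieces and to keep careful track of the various module and algebra structures and of base change to $k$.

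For part (a), I would first record that $\wfa$-tightness (the $\wfa$-analogue of (\ref{rad})) is inherited by and from direct summands: if $\wM=\wM'\oplus\wM''$ as $\wA$-lattices then $\rad^n\wM_K=\rad^n\wM'_K\oplus\rad^n\wM''_K$, hence $\wrad^n\wM=\wrad^n\wM'\oplus\wrad^n\wM''$, while $\wrad^n\wfa\subseteq\wfa\subseteq\wA$ acts diagonally on the summands, so $(\wrad^n\wfa)\wM=(\wrad^n\wfa)\wM'\oplus(\wrad^n\wfa)\wM''$. Being module-finite over the complete local ring $\sO$, the algebra $\wA$ is semiperfect, so the left regular module $\wA$ is a finite direct sum of projective indecomposable $\wA$-modules, each of which is $\wfa$-tight by Lemma \ref{tight}; hence $\wA$ is $\wfa$-tight, i.e.\ $(\wrad^n\wfa)\wA=\wrad^n\wA$ for all $n$. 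The remaining equalities in (\ref{tightequal}) then follow at once from $\wrad^n\wA=\wA\cap\rad^n\wA_K$ and from $\rad^n\wA_K=(\rad^n\wfa_K)\wA_K$ in (\ref{radicals}). Applying the equality clause of Lemma \ref{pretight} to $\wM=\wA$ gives the physical identity $\wgr\wA=\gr\wA$ of graded $\sO$-modules; since $\{\wrad^n\wA\}$ is a ring filtration, $\gr\wA$ is an $\sO$-algebra, and $\wgr\wA$ inherits this structure.

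For part (b), the point is that the entire argument for (a) is left/right symmetric: $\wA$ is semiperfect as a right module as well, the proof of Lemma \ref{tight} uses only lifting of projectives to larger posets together with $G_r$-projectivity and so applies verbatim to projective indecomposable right $\wA$-modules, and the right-handed identity $\wA_K(\rad^n\wfa_K)=\rad^n\wA_K$ is already on record. Hence $\wA$ is right $\wfa$-tight, so $\wA(\wrad^n\wfa)=\wrad^n\wA=(\wrad^n\wfa)\wA$, and the right filtration has associated graded $\gr\wA$ with the same algebra structure. To equip $\wgr\wM$ with a $\wgr\wA$-action for an arbitrary $\wA$-module $\wM$, I would rewrite $\wrad^m\wA=\wA(\wrad^m\wfa)$ and use $(\wrad^m\wfa)(\wrad^n\wfa)\subseteq\wrad^{m+n}\wfa$ and $(\wrad^{m+n}\wfa)\wA\wM=(\wrad^{m+n}\wfa)\wM$ to get $\wrad^m\wA\cdot(\wrad^n\wfa)\wM\subseteq(\wrad^{m+n}\wfa)\wM$, and similarly $\wrad^{m+1}\wA\cdot(\wrad^n\wfa)\wM$ and $\wrad^m\wA\cdot(\wrad^{n+1}\wfa)\wM$ both land in $(\wrad^{m+n+1}\wfa)\wM$; this descends to a well-defined pairing on graded pieces, making $\wgr\wM$ a graded $\wgr\wA=\gr\wA$-module. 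Finally, for an $\wA$-lattice $\wM$ the inclusions $(\wrad^n\wfa)\wM\hookrightarrow\wrad^n\wM$ of Lemma \ref{pretight} are visibly compatible with these actions, so $\wgr\wM\to\gr\wM$ is a morphism of $\gr\wA$-modules.

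For part (c), put $A=\wA/\pi\wA$ and note that $(\wrad^n\wfa)A$ is the image of $(\wrad^n\wfa)\wA=\wrad^n\wA$ in $A$; since $\wrad^n\wA=\wA\cap\rad^n\wA_K$, the quotient $\wA/\wrad^n\wA$ embeds in the $K$-vector space $\wA_K/\rad^n\wA_K$ and so is $\sO$-torsion-free, whence this image is $\wrad^n\wA\otimes_\sO k$, and since each $\wrad^n\wA/\wrad^{n+1}\wA$ is $\sO$-free, tensoring $0\to\wrad^{n+1}\wA\to\wrad^n\wA\to\wrad^n\wA/\wrad^{n+1}\wA\to 0$ with $k$ stays exact. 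Summing over $n$ gives $\wgr A\cong(\gr\wA)_k=(\wgr\wA)_k$, a $k$-algebra by base change of the $\sO$-algebra $\gr\wA$. The remaining assertions are then formal: for any $\wA$-module $\wM$, $(\wgr\wM)_k=\wgr\wM\otimes_\sO k$ is a module over $\wgr\wA\otimes_\sO k=\wgr A$ by (b), and if $M$ is an $A$-module, inflated to an $\wA$-module, then $\pi$ kills $\wgr M$, so its $\wgr\wA$-action factors through $\wgr A$; equivalently every graded piece of $\wgr M$ is a $k$-space, so $\wgr M=(\wgr M)_k$. The one step I expect to require genuine care is the left/right symmetry invoked in (b): it rests on the symmetry of the proof of Lemma \ref{tight} rather than on a symmetric statement, so I would want to confirm that lifting of projectives and $G_r$-projectivity really do function on the right --- which they do, since $\Dist(G_r)$ and the relevant integral quantum algebras carry antipodes identifying left and right projectives.
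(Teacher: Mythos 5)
Your proposal is correct and follows essentially the same route as the paper's own proof: part (a) reduces to Lemma~\ref{tight} applied to the projective indecomposable summands of $\wA$, part (b) is the left/right-symmetric counterpart of the same argument (the paper dispatches this with the phrase ``an evident argument working with right modules,'' which is exactly the right-handed version of Lemma~\ref{tight} you spell out, supported by the recorded identity $\wA_K(\rad^n\wfa_K)=\rad^n\wA_K$), and (c) is base change via purity of $\wrad^n\wA$. Your closing caveat about verifying that the proof of Lemma~\ref{tight} carries over to right modules is exactly the point one should check, and the antipode/anti-automorphism observation is the standard way to settle it.
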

 
 \begin{proof} Part (a) follows because $\wA$ is a direct sum of projective indecomposable modules $\wP$
 which are all $\wfa$-tight by Lemma \ref{tight}. Hence, $\wA$ is tight.  
 
 Now we prove part (b). By (a), $\wA$ is $\wfa$-tight, so that
 $(\wrad^n\wfa)\wA=\wrad^n\wA=\rad^n\wA_K\cap\wA$ as a left $\wA$-submodule of $\wA$. Thus, 
$$\wA(\wrad^n\wfa)\subseteq\wA(\wrad^n\wfa)\wA=(\wrad^n\wfa)\wA.$$
The reverse containment holds by an evident argument working with right modules.
This verifies the first assertion in (b), and we leave the other assertions to the reader.

Finally, part (c) follows easily from part (b).
 \end{proof}
 
This leads to the following result.

\begin{cor}\label{tightcor} Let $\wM$ be an $\wA$-lattice. 

(a) $\wM$ is $\wA$-tight if and only if $\wM$
is $\wfa$-tight. 

(b) There is a natural map 
\begin{equation}\label{map} \wgr \overline{\wM}\longrightarrow\overline{\gr\wM}\end{equation}
factoring in grade $n\in\mathbb N$ as
\begin{equation}\label{factoring} (\wgr \overline{\wM})_n\cong\frac{(\wrad^n\wfa)\wM+\pi\wM  }{(\rad^{n+1}\wfa)\wM+\pi\wM  }
\longrightarrow\frac{\wrad^n\wM+\pi\wM}{\wrad^{n+1}\wM+\pi\wM}
\cong(\overline{\gr\wM})_n.\end{equation}

(c) If $\wM$ is $\wA$- (or $\wfa$-) tight, then $\wgr \wM=\gr \wM$. Also, the map (\ref{map}) is 
an isomorphism
\begin{equation}\label{identity}\wgr\overline{\wM}\cong\overline{\gr\wM}.\end{equation}

(d) Conversely, if (\ref{map}) gives an isomorphism  (\ref{identity}), then 
$\wM$ is $\wfa$-tight.
\end{cor}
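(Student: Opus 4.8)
The plan is to reduce everything to the two tools that have already been assembled: Corollary \ref{Aistight}, which says $\wA$ itself is $\wfa$-tight and hence $\wgr\wA=\gr\wA$ as $\sO$-algebras (so $\wrad^n\wfa\cdot\wA=\wrad^n\wM_K\cap\wA$ for $\wM=\wA$), and Lemma \ref{pretight}, the basic criterion characterizing $\wfa$-tightness of an $\wfa$-lattice by surjectivity of the natural map $\wgr\wM\to\gr\wM$. For part (a), the forward direction (``$\wA$-tight $\Rightarrow$ $\wfa$-tight'') is immediate once one checks that $\wrad^n\wM$ computed using $\wA$ agrees with $\wrad^n\wM$ computed using $\wfa$; but this is exactly the content of (\ref{radicals}) together with $\wA_K\rad^n\wfa_K=\rad^n\wA_K$, which forces $\rad^n\wM_K=(\rad^n\wfa_K)\wM_K=(\rad^n\wA_K)\wM_K$, so the two intersections with $\wM$ coincide. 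Hence the two tightness conditions $(\wrad^n\wfa)\wM=\wrad^n\wM$ and $(\wrad^n\wA)\wM=\wrad^n\wM$ literally coincide once one substitutes $\wrad^n\wA=(\wrad^n\wfa)\wA$ from Corollary \ref{Aistight}(a): $(\wrad^n\wA)\wM=(\wrad^n\wfa)\wA\wM=(\wrad^n\wfa)\wM$. So (a) is essentially a formal unwinding.

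For (b) I would simply write down the map. In grade $n$, $(\wgr\overline{\wM})_n$ is by definition $(\wF^n\wM/\wF^{n+1}\wM)\otimes_\sO k = \bigl((\wrad^n\wfa)\wM\bigr)\big/\bigl((\wrad^{n+1}\wfa)\wM\bigr)$ after tensoring with $k$, which is $\bigl((\wrad^n\wfa)\wM+\pi\wM\bigr)\big/\bigl((\wrad^{n+1}\wfa)\wM+\pi\wM\bigr)$; similarly $(\overline{\gr\wM})_n=\bigl(\wrad^n\wM+\pi\wM\bigr)\big/\bigl(\wrad^{n+1}\wM+\pi\wM\bigr)$. The inclusion $(\wrad^n\wfa)\wM\subseteq\wrad^n\wM$ of (\ref{tightn}) in Lemma \ref{pretight} induces the displayed map in each grade, and these assemble into a map of graded $\wgr A=(\gr\wA)_k$-modules by Corollary \ref{Aistight}(b),(c) (which give $\wgr\wM$ its $\gr\wA$-module structure compatibly). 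For (c), if $\wM$ is $\wfa$-tight (equivalently $\wA$-tight by (a)), Lemma \ref{pretight} already gives the physical equality $\wgr\wM=\gr\wM$; tensoring this equality with $k$ gives $\wgr\wM\otimes_\sO k = \gr\wM\otimes_\sO k$, and one checks that the left side is $\wgr\overline{\wM}$ — here one uses that tightness makes each $\wF^n\wM=(\wrad^n\wfa)\wM$ an $\sO$-pure submodule (a direct summand of $\wM$ as $\sO$-module, since each $\wrad^n\wM=\rad^n\wM_K\cap\wM$ is $\sO$-pure in the lattice $\wM$), so that the short exact sequences $0\to\wF^{n+1}\wM\to\wF^n\wM\to(\wgr\wM)_n\to 0$ stay exact after $\otimes_\sO k$; hence $\wgr\overline{\wM}=(\wgr\wM)_k=(\gr\wM)_k=\overline{\gr\wM}$, and the map (\ref{map}) is this identity.

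Part (d) is the converse and I expect it to be the only place requiring a real argument rather than bookkeeping. Assume the map (\ref{map}) is an isomorphism, i.e. the map of (\ref{factoring}) is surjective (and injective) in every grade $n$. The strategy is a Nakayama-type descending induction, mirroring the proof of Lemma \ref{pretight}: choose $n\gg 0$ so that $\wrad^n\wM=0=(\wrad^n\wfa)\wM$, so the inclusion (\ref{tightn}) is an equality there; then I want to propagate equality downward from $n$ to $n-1$. Given equality $(\wrad^n\wfa)\wM=\wrad^n\wM$, surjectivity of (\ref{factoring}) in grade $n-1$ says $(\wrad^{n-1}\wfa)\wM+\pi\wM \supseteq \wrad^{n-1}\wM$, i.e. $\wrad^{n-1}\wM=(\wrad^{n-1}\wfa)\wM+(\pi\wM\cap\wrad^{n-1}\wM)$. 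Since $\wrad^{n-1}\wM=\rad^{n-1}\wM_K\cap\wM$ is $\sO$-pure in $\wM$, one has $\pi\wM\cap\wrad^{n-1}\wM=\pi\wrad^{n-1}\wM$; combined with the already-known inclusion $(\wrad^{n-1}\wfa)\wM\subseteq\wrad^{n-1}\wM$, this reads $\wrad^{n-1}\wM=(\wrad^{n-1}\wfa)\wM+\pi\wrad^{n-1}\wM$, and Nakayama's lemma over the DVR $\sO$ (applied to the finitely generated $\sO$-module $\wrad^{n-1}\wM/(\wrad^{n-1}\wfa)\wM$) yields $\wrad^{n-1}\wM=(\wrad^{n-1}\wfa)\wM$. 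Descending induction then gives equality for all $n\geq 0$, which is exactly $\wfa$-tightness. The delicate point — and the main obstacle I anticipate — is justifying the $\sO$-purity identity $\pi\wM\cap\wrad^{n}\wM=\pi\wrad^n\wM$ cleanly; this is where the hypothesis that $\wM$ is an $\sO$-\emph{lattice} (free of finite rank) and the definition $\wrad^n\wM=\rad^n\wM_K\cap\wM$ are doing the work, since intersecting a $K$-subspace with the lattice always produces a pure (hence free, hence complemented over the DVR) submodule.
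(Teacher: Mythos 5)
Your proofs of parts (a), (b), and (c) follow the paper's argument essentially verbatim: part (a) is the formal unwinding via $(\wrad^n\wA)\wM=(\wrad^n\wfa)\wA\wM=(\wrad^n\wfa)\wM$ together with (\ref{radicals}) ensuring the two notions of $\wrad^n\wM$ coincide; part (b) is the observation that the map (\ref{map}) in grade $n$ is just the inclusion $(\wrad^n\wfa)\wM\subseteq\wrad^n\wM$ filtered through the third isomorphism theorem; and part (c) rests on tightness forcing $\wF^n\wM=\wrad^n\wM$ to be $\sO$-pure in $\wM$, so that reduction mod $\pi$ commutes with taking the associated graded. The only slip is in part (b), where you assert that $(\wgr\overline{\wM})_n$ is \emph{by definition} $(\wF^n\wM/\wF^{n+1}\wM)\otimes_\sO k$. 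That identity is false in general (it is exactly what fails when $\wF^n\wM$ is not pure, and (c),(d) measure this failure); the correct definitional reading is $(\wgr\overline{\wM})_n=\wF^n\overline{\wM}/\wF^{n+1}\overline{\wM}$, which by the third isomorphism theorem gives the displayed quotient $\bigl((\wrad^n\wfa)\wM+\pi\wM\bigr)/\bigl((\wrad^{n+1}\wfa)\wM+\pi\wM\bigr)$. Since you immediately land on the correct formula, the error does not propagate, but it should be repaired.

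For part (d) you take a genuinely different route from the paper. The paper assembles a commutative square with $\wgr\wM\to\gr\wM$ (the map of Lemma \ref{pretight}) on top, the map (\ref{map}) on the bottom, and the two reduction maps as verticals; since the left vertical and the bottom map are surjective, the image of $\wgr\wM$ in $\gr\wM$ surjects onto $\overline{\gr\wM}=\gr\wM/\pi\gr\wM$, and a single application of Nakayama to the finitely generated $\sO$-module $\gr\wM$ gives $\wgr\wM\twoheadrightarrow\gr\wM$, whence tightness by Lemma \ref{pretight}. You instead run a descending induction grade by grade, mirroring the proof of Lemma \ref{pretight}: from $(\wrad^{n+1}\wfa)\wM=\wrad^{n+1}\wM$ and surjectivity of (\ref{factoring}) in grade $n$ you extract $\wrad^n\wM=(\wrad^n\wfa)\wM+\pi\wrad^n\wM$ (using the modular law and the purity identity $\pi\wM\cap\wrad^n\wM=\pi\wrad^n\wM$), and then Nakayama applied to the finitely generated module $\wrad^n\wM/(\wrad^n\wfa)\wM$ closes the step. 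Both arguments are correct and use the same two ingredients (purity of $\wrad^n\wM$ and Nakayama over $\sO$); the paper's version is shorter because it invokes Nakayama once globally on $\gr\wM$, whereas yours is more explicit about where purity enters and does not presuppose any structural facts about $\gr\wM$ beyond its being a finitely generated $\sO$-module. Either presentation would serve.
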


\begin{proof} For part (a), Corollary \ref{Aistight}(a)  says that $(\wrad^n\wfa)\wA=\wrad^n\wA$, for all $n$, and hence that $(\wrad^n\wfa )\wM=(\wrad^n\wA)\wM$ for all $n$. Also, $\wrad^n\wM=\wM\cap\rad^n\wM_K$ agrees with its counterpart for $\wfa$, since $(\rad\wfa_K)\wA_K=\rad\wA_K$ by (\ref{radicals}). Hence,  $\wM$ is $\wfa$-tight if and only if it is $\wA$-tight. This proves (a).

Proof of (b): The left hand isomorphism in (\ref{factoring}) is obtained from the definition of $\wgr$ and the
``first isomorphism theorem." The middle map is obtained from the inclusion $(\wrad^n\wfa)\wM\subseteq
\wrad^n\wM$ and its $n+1$-analog. The right hand isomorphism is a consequence of the purity of
$\wrad^n\wM$ in $\wM$. This proves (b).

Next, consider part (c). First, the meaning of $\wgr\wM$ can be defined using $\wfa$ or $\wA$ by
Corollary \ref{Aistight}(a). This gives the first assertion of (c). Next, by above, $(\wrad\wfa^n)\wM$ is 
a pure submodule of $\wM$, so that $\overline{(\wrad\wfa^n)\wM}=(\wrad\wfa^n)M$, writing $M:=\overline{\wM}$. 
Thus, $$\overline{(\gr\wM)_n}\cong\overline{(\wgr \wM)_n}\cong\overline{\left( \frac{(\wrad^n\wfa)\wM}{(\wrad^{n+1}\wfa)\wM}\right)}
\cong\frac {\overline{(\wrad^n\wfa) \wM}}{\overline{(\wrad^{n+1}\wfa)\wM}}\cong\frac {{(\wrad^n\wfa) M}}{{(\wrad^{n+1}\wfa) M}},$$
which implies the second assertion in part (c).

To prove part (d), observe there is always a surjection $\wgr \wM\twoheadrightarrow \wgr M$ (where, as
above, $M:=\overline{\wM}$) induced by the natural homomorphism $\wM \twoheadrightarrow M$.
We can also map $\gr \wM$ to its reduction mod $\pi$, obtaining, together with (\ref{map}) and the
map of Lemma \ref{pretight}, a commutative diagram
$$\begin{CD}  \wgr\wM @>>> \gr\wM\\
@VV{\text{onto}}V @VV{\text{onto}}V\\\wgr M @>>>
\overline{\gr\wM}.\end{CD}$$
Now part (d) follows from Nakayama's lemma for $\sO$-modules
and Lemma \ref{pretight}.
\end{proof}
 
 \begin{cor}\label{cor3.9} For each $\lambda\in\Lambda$, the module $\wDelta(\lambda)$ is $\wfa$-tight. More
 generally, if $\Gamma\subseteq\Lambda$ is a poset ideal, and if $\wP$ is a projective indecomposable
 module in $\wA_\Gamma$, then $\wP$ is $\wfa$-tight, as is $\wA_\Gamma$. \end{cor}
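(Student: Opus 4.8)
The plan is to prove \ref{cor3.9} by reducing both assertions to the tightness statements already available for projective indecomposables, and then chaining through Corollary \ref{Aistight} and Corollary \ref{tightcor}. First I would handle the ``more general'' statement, since the assertion about $\wDelta(\lambda)$ will drop out of it. Let $\Gamma\subseteq\Lambda$ be a poset ideal and let $\wP$ be a projective indecomposable module for $\wA_\Gamma$. The key point is that $\wfa$ ``means the same thing'' whether we work inside $\wA_\Lambda$ or $\wA_\Gamma$: by construction $\wfa$ is the image of $\widetilde u_\zeta$, and by the surjection $\wA_\Lambda\twoheadrightarrow\wA_\Gamma$ the image of $\widetilde u_\zeta$ in $\wA_\Gamma$ is the corresponding quotient of $\wfa$; moreover by (\ref{radicals}) and (\ref{closelyrelatedresult}) the radical filtration of $\wfa$ is compatible with these identifications, so $\wfa$-tightness computed in $\wA_\Gamma$-mod agrees with $\wfa$-tightness computed in $\wA_\Lambda$-mod. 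Thus it suffices to prove $\wfa$-tightness of $\wP$ working with the poset $\Gamma$ in place of $\Lambda$; but Lemma \ref{tight} (applied with $\Gamma$ playing the role of $\Lambda$ there --- the hypotheses on $\Lambda$ used in that lemma are only the ideal structure, which $\Gamma$ inherits) gives exactly that $\wP$ is $\wfa$-tight.

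Next I would deduce the tightness of $\wA_\Gamma$ itself. The algebra $\wA_\Gamma$ decomposes as a direct sum of projective indecomposable modules $\wP$, each of which is $\wfa$-tight by the previous step; since tightness (\ref{rad}) is a condition that passes through finite direct sums, $\wA_\Gamma$ is $\wfa$-tight. (Alternatively, one can invoke Corollary \ref{Aistight}(a) directly with $\Gamma$ in place of $\Lambda$, noting again that its proof only uses the ideal property.)

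Finally, for the module $\wDelta(\lambda)$ with $\lambda\in\Lambda$, I would take $\Gamma$ to be the poset ideal in $\Lambda$ generated by $\lambda$, i.e. $\Gamma=\{\mu\in\Lambda : \mu\leq\lambda\}$. In the quasi-hereditary algebra $\wA_\Gamma$, the weight $\lambda$ is maximal, so the standard module $\wDelta(\lambda)$ coincides with the projective indecomposable cover $\wP_\Gamma(\lambda)$ of $L(\lambda)$ in $\wA_\Gamma$-mod (this is the standard fact that for a maximal weight in a quasi-hereditary algebra the standard and projective indecomposable modules agree; it holds integrally as well, cf.\ \cite{CPS1a}, \cite{DS}). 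Hence $\wDelta(\lambda)$ is $\wfa$-tight by the projective indecomposable case just established, and by Corollary \ref{tightcor}(a) it is also $\wA$-tight.

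The main obstacle --- really the only subtle point --- is the bookkeeping that $\wfa$ and its radical filtration are genuinely insensitive to the ambient poset, so that Lemma \ref{tight}, Corollary \ref{Aistight}, and Corollary \ref{tightcor} can all be reread with an arbitrary poset ideal $\Gamma$ in place of $\Lambda$; this rests on the isomorphisms (\ref{radicals}) and (\ref{closelyrelatedresult}) together with the compatibility of the surjections $\wA_\Lambda\twoheadrightarrow\wA_\Gamma$ and $\gr\wA_\Lambda\twoheadrightarrow\gr\wA_\Gamma$ with the inclusion of $\wfa$. Once that is granted, everything else is a direct citation of the preceding lemmas and the elementary closure of tightness under direct summands.
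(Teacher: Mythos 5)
Your proposal has a genuine gap in the first step.  You propose to apply Lemma \ref{tight} ``with $\Gamma$ playing the role of $\Lambda$'' and justify this by asserting that the hypotheses used in that lemma are ``only the ideal structure, which $\Gamma$ inherits.''  That is not correct: the ambient poset in Lemma \ref{tight} must satisfy Assumptions \ref{assumptions}, which require that it contain all of $\Xreg\cap X_1(T)$ (and the weights $2(p-1)\rho+w_0\lambda$ for $\lambda$ regular restricted).  A general poset ideal $\Gamma\subseteq\Lambda$ need not contain any restricted weights at all.  This matters: the constant refrain in the proof of Lemma \ref{tight} is that ``the algebra $\wfa$ means the same in $\wA$ and in $\wA_{\Lambda'}$,'' which is true for \emph{enlargements} $\Lambda'\supseteq\Lambda$ still satisfying Assumptions \ref{assumptions}, precisely because both images of $\widetilde u_\zeta$ land on the projection of the small quantum group onto its regular blocks.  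For a \emph{smaller} $\Gamma$ missing regular restricted weights, the image of $\widetilde u_\zeta$ in $\wA_\Gamma$ is a proper quotient of $\wfa$, and the identification you rely on breaks down.  Your remark that ``$\wfa$ means the same thing whether we work inside $\wA_\Lambda$ or $\wA_\Gamma$'' is thus exactly the point that needs proof and is not true in the naive sense; neither (\ref{radicals}) nor (\ref{closelyrelatedresult}) gives it to you.

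The paper's actual argument avoids this issue entirely.  Rather than re-running Lemma \ref{tight} over $\Gamma$, it invokes the already-established tightness of $\wA=\wA_\Lambda$ (Corollary \ref{Aistight}(a), which records the physical equality $\wgr\wA=\gr\wA$) and then passes to the quotient.  Concretely, one forms the commutative square with $\wgr\wA=\gr\wA$ at the top, $\wgr\wA_\Gamma$ and $\gr\wA_\Gamma$ at the bottom, and both vertical maps surjective: the left one by the definition of $\wgr$, the right one by (\ref{closelyrelatedresult}) which identifies $\gr\wA_\Gamma$ with $(\gr\wA)_\Gamma$.  It follows that the bottom map $\wgr\wA_\Gamma\to\gr\wA_\Gamma$ is surjective, hence by Lemma \ref{pretight} that $\wA_\Gamma$ is tight as an $\wA$-lattice.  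Your second and third steps (direct summands of a tight module are tight, and $\wDelta(\lambda)$ is a projective indecomposable over $\wA_\Gamma$ for $\Gamma$ the ideal generated by $\lambda$) are fine and match the paper's.  But the first step needs to be replaced by this quotient-of-$\wA$ argument, not by a re-application of Lemma \ref{tight} over $\Gamma$.
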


 \begin{proof}By Corollary \ref{tightcor}, it suffices to show that $\wA_\Gamma$ is $\wA$-tight.  In fact, given $\lambda\in \Lambda$, 
 let $\Gamma$ be the ideal in $\Lambda$ generated by $\lambda$. Then $\lambda$ is maximal in
 $\Gamma$, so that $\wDelta(\lambda)$ is a projective indecomposable module in $\wA_\Gamma$.
 
Now let $\Gamma$ be any poset ideal in $\Lambda$, and let $\wP$ be a projective indecomposable module in $\wA_\Gamma$--mod.
 Form 
 a commutative diagram
$$\begin{CD}
 \wgr \wA @= \gr\wA\\ @V\text{onto}VV @VV\text{onto}V \\
 \wgr\wA_\Gamma @>>> \gr\wA_\Gamma
 \end{CD}$$
 of $\sO$-modules and natural maps. By (\ref{closelyrelatedresult}), 
 $\gr 
 \wA_\Gamma\cong (\gr\wA)_\Gamma$, which implies that the right hand vertical map is surjective. On the other hand, the left hand vertical map is clearly surjective by the definition of $\wgr$. Therefore, the bottom horizontal map
 is surjective, and Lemma \ref{pretight} implies that $\wA_\Gamma$ is tight.
  \end{proof}

  \section{A homological criterion for $p$-filtrations}
\setcounter{equation}{0}

   {\it In this section, Assumptions \ref{assumptions}
  are in force.} In particular, the prime $p$ is odd and $\geq 2h-2$. Let $\Gamma$ be a non-empty ideal in the finite poset $\Lambda$ which is required to
  be itself an ideal in the poset  of regular dominant weights. 
 Our focus will be on $\Gamma$ and on modules for the algebra $A_\Gamma$. But we need
 $\Lambda$ large enough so that various related rational
  $G$-modules make sense as modules for  $A_\Lambda$. 
 {\it For that reason, we also
 assume that $\Lambda$ satisfies the condition (\ref{w0}) for 
all  $\gamma\in\Gamma$.} This condition means that if $\gamma\in\Gamma$, then
$\wP^\sharp(\gamma)$ and $\wQ^\sharp(\gamma)$ belong to $\wA_\Lambda$--mod.
See (\ref{wP}) and (\ref{wQ}).
 Recall from the previous section that $\wfa$ is the image of the (integral) small quantum group $\widetilde u_\zeta$ in $\wA_\Lambda$.   Also,
 $\fa:=\wfa_k$ is isomorphic (through the natural surjection) to the sum of the regular blocks in the restricted enveloping algebra $u$ of $G$.  
 
 Additional assumptions, involving the LCF, will be introduced in Lemma \ref{preliminaryLCF}(c) and after its
 proof. The LCF will not be used until that point.
     
Let $M$ be an $\fa$-module. For a non-negative integer $r$, let, in the notation of (\ref{decreasing}),
\begin{equation}\label{titlder}M_{\widetilde r}:=(\wgr M)_r=\wF^rM/\wF^{r+1}M=(\wrad^r\wfa) M/(\wrad^{r+1}\wfa) M.\end{equation}
(This is a slight abuse of notation. We will not use the symbol ``$\widetilde r$", for an integer $r$, except as a subscript as
above.)
 If $M$ is an $A$-module, then $M_{\widetilde r}$ is also an $A$-module (with $\wF^{>0} A:=\Sigma_{n>0}F^nA$ acting trivially).

 Put $\wsoc^{-n}M:=\{x\in M\,|\, \wF^nx=0\}$, and set
 \begin{equation}\label{grdiamond} \wgr^\diamond M:=\bigoplus_{n\geq 0}\wsoc^{-n-1}M/\wsoc^{-n}M
 \end{equation}
 with the index $n$ giving the degree $-n$ term. (Thus, $\wgr^\diamond M$ is negatively graded.)
 If $M$ is an $A$-module, then $\wgr^\diamond M$ is a graded $\wgr A$-module.

\begin{lem}\label{preliminaryLCF}Let $\mu\in\Gamma$.

(a) $\nabla_p(\mu)$ is naturally isomorphic to the $\fa$-socle of $Q^\sharp(\mu)$ and also naturally
isomorphic to the $\wgr \fa$-socle of $\gr^\diamond Q^\sharp(\mu)$.

(b) There are inclusions
$$\nabla_p(\mu)\subseteq\rnabla(\mu)\subseteq \wsoc^{-1}Q^\sharp(\mu)$$
of $A$-modules.

(c) Assume the LCF holds for $G$. Then the inclusions in part (b) are equalities, for all $\mu\in\Gamma$.
 \end{lem}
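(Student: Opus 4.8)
The plan is to prove the three parts in order, since (c) rests on (a) and (b). For part (a), I would start from the definition $\nabla_p(\mu) = L(\mu_0)\otimes\nabla(\mu_1)^{[1]}$ and the factorization $Q^\sharp(\mu) = Q^\sharp(\mu_0)\otimes\nabla(\mu_1)^{[1]}$. The algebra $\fa$ is (a sum of regular blocks of) the restricted enveloping algebra $u$, which acts only through the first tensor factor (the Frobenius-twisted factor $\nabla(\mu_1)^{[1]}$ is $\fa$-trivial). Hence the $\fa$-socle of $Q^\sharp(\mu)$ is $(\soc_\fa Q^\sharp(\mu_0))\otimes\nabla(\mu_1)^{[1]}$, and since $Q^\sharp(\mu_0)$ is the $G_1T$-injective hull of $\widehat L_1(\mu_0)$ we have $\soc_\fa Q^\sharp(\mu_0) = L(\mu_0)$ (this is essentially the argument already used in the proof of Proposition \ref{prop2.4}(b), now read $u$-locally rather than $G_1$-locally). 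So $\soc_\fa Q^\sharp(\mu)\cong\nabla_p(\mu)$. For the second assertion, I would note that passing to $\gr^\diamond$ with respect to the $\wF^\bullet$-filtration does not change the bottom layer: $(\gr^\diamond Q^\sharp(\mu))_0 = \wsoc^{-1}Q^\sharp(\mu)/\wsoc^0 Q^\sharp(\mu) = \wsoc^{-1}Q^\sharp(\mu)$, and $\wsoc^{-1}$ (killing $\wrad\wfa$) contains the $\fa$-socle; more precisely the $\wgr\fa$-socle of $\gr^\diamond Q^\sharp(\mu)$, being annihilated by $\wrad\fa = (\wgr\fa)_{>0}$-plus-$\rad(\wgr\fa)_0$, coincides with the honest $\fa$-socle computed above. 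A small amount of care is needed to identify $\soc_{\wgr\fa}$ with $\soc_\fa$, using that $\wgr\fa$ and $\fa$ have the same irreducibles and that the relevant socle is semisimple.

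For part (b), the inclusion $\nabla_p(\mu)\subseteq\rnabla(\mu)$ is standard: $\rnabla(\mu)$ is the reduction mod $\pi$ of the maximal lattice in $L_\zeta(\mu)$, its $G$-socle is $L(\mu)$, and by Lin's tensor-product theorem (Proposition \ref{Lin}(a)) $\rnabla(\mu)\cong\rnabla(\mu_0)\otimes\nabla(\mu_1)^{[1]}$, while $\nabla_p(\mu) = L(\mu_0)\otimes\nabla(\mu_1)^{[1]}$ embeds into it because $L(\mu_0)=\soc\,\rnabla(\mu_0)\hookrightarrow\rnabla(\mu_0)$. For $\rnabla(\mu)\subseteq\wsoc^{-1}Q^\sharp(\mu)$ I would first produce an embedding $\rnabla(\mu)\hookrightarrow Q^\sharp(\mu)$ of $A$-modules. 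This should follow by an argument parallel to the one in Proposition \ref{prop2.4}(b): the inclusion $L(\mu)\hookrightarrow Q^\sharp(\mu)$ extends, using Lemma \ref{extensionlemma} (which gives the vanishing of the relevant $\Ext^1$'s against composition factors $L(\nu)$ with $\nu\leq\mu$, hence against all of $\rnabla(\mu)$ since its composition factors have highest weights $\leq\mu$), to a map $\rnabla(\mu)\to Q^\sharp(\mu)$, which is injective because $\rnabla(\mu)$ has simple socle $L(\mu)$ meeting the socle of $Q^\sharp(\mu)$. Once $\rnabla(\mu)$ sits inside $Q^\sharp(\mu)$, to see it lands in $\wsoc^{-1}Q^\sharp(\mu)$ I need that $\wrad\wfa$ annihilates $\rnabla(\mu)$, i.e. that $\fa$ acts on $\rnabla(\mu)$ through $\fa/\wrad\fa$ — equivalently that $\rnabla(\mu)$ is $\fa$-semisimple as a module for the small quantum group's reduction. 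Since $\rnabla(\mu)\cong\rnabla(\mu_0)\otimes\nabla(\mu_1)^{[1]}$ and the twisted factor is $\fa$-trivial, this reduces to: $\rnabla(\mu_0)$ is $u$-semisimple. That in turn I would get from the characterization of $\rnabla(\mu_0)$ as (the reduction of) a $u_\zeta$-socle-type lattice, or from the fact that its composition factors $L(\nu)$ have $\nu\leq\mu_0$ restricted and hence are $u$-irreducible — actually I should argue directly that $\wsoc^{-1}$ of $Q^\sharp(\mu)$, i.e. the maximal $\fa$-semisimple-bottom submodule, contains $\rnabla(\mu)$ because the latter restricts $\fa$-semisimply.

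Part (c) is where the LCF enters and is the crux. Under the LCF for $G$ one has $\rDelta(\lambda)=\Delta^p(\lambda)$, hence dually $\rnabla(\mu)=\nabla_p(\mu)$ for all dominant $\mu$ (Corollary \ref{LCF}(c) together with the duality $\wdelta$), which immediately collapses the left inclusion in (b) to an equality. For the right inclusion, I must show $\wsoc^{-1}Q^\sharp(\mu) = \nabla_p(\mu)$, i.e. that the $\wrad\wfa$-torsion part of $Q^\sharp(\mu)$ is exactly its $\fa$-socle. The inclusion $\soc_\fa Q^\sharp(\mu)\subseteq\wsoc^{-1}Q^\sharp(\mu)$ is clear (socle elements are killed by $\wrad\fa\supseteq\wF^1$... wait, $\soc_\fa$ is killed by $\rad\fa$, and $\wF^1 = \wrad\fa\subseteq\rad\fa$ by definition of the radical filtration, so yes). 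For the reverse inclusion $\wsoc^{-1}Q^\sharp(\mu)\subseteq\soc_\fa Q^\sharp(\mu)$: a vector killed by $\wrad\wfa = \wF^1$ generates an $\fa$-submodule on which $\fa$ acts through $\fa/\wrad^?\fa$... this is not automatic. The honest statement I expect to need is that $\wsoc^{-1}$ is semisimple-over-$\fa$, which should come from the fact that $\gr^\diamond Q^\sharp(\mu)$ is a module for $\gr\wfa = \wgr\fa$ whose degree-$0$ part $(\gr^\diamond Q^\sharp(\mu))_0 = \wsoc^{-1}$ is killed by the positive-degree part $(\gr\wfa)_{>0}$ — but one also needs it killed by $\rad((\gr\wfa)_0) = \rad\fa$, and that is where the LCF-driven structural result on $\gr^\diamond Q^\sharp(\mu)$ having a $\gr\wnabla$-type (reduced) filtration, from Corollary \ref{cor3.3} combined with Corollary \ref{ZeroDegreeQHA} and $\wrnabla(\mu) = \wgr^\diamond\wQ^\sharp(\mu)_0$-type identifications, must be invoked. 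Concretely: $\gr^\diamond Q^\sharp(\mu)$ has a filtration with sections $\rnabla(\nu)(-r)$, and under the LCF each $\rnabla(\nu) = \nabla_p(\nu)$; the degree-$0$ part then has a filtration by $\nabla_p(\nu)$'s with $\nu$ ranging over a set whose bottom term, by the simple-socle/simple-head bookkeeping from Corollary \ref{ZeroDegreeQHA}, is forced to be just $\nabla_p(\mu)$ once we compare $\fa$-socles using part (a). I expect the main obstacle to be precisely this last bookkeeping step — pinning down that the grade-$0$ piece of $\gr^\diamond Q^\sharp(\mu)$ is $\nabla_p(\mu)$ and nothing more — which requires carefully transporting the $\gr\wnabla$-filtration of $\gr^\diamond\wQ^\sharp(\mu)$ from Corollary \ref{cor3.3} down to $k$ and into grade $0$, matching $\gr\wnabla(\nu)_0 \cong \wrnabla(\nu)$ and using tightness (Corollary \ref{cor3.9}, Corollary \ref{tightcor}) so that reduction mod $\pi$ commutes with $\wgr$.
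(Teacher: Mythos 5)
Your plan for parts (a) and (b) is broadly in the paper's spirit, but part (c) misses the key observation the paper uses, and this is where the argument really hinges on the LCF. I'll take the three parts in turn.

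For (a), the first isomorphism is indeed immediate from the tensor factorization, as you say. For the second isomorphism the paper dualizes: it proves the equivalent statement that the $\fa$-head of $P^\sharp(\mu)$ equals the $\wgr\fa$-head of $\wgr P^\sharp(\mu)$, and the point is that $P^\sharp(\mu)$ is $\fa$-\emph{projective}, so one reduces to the regular representation $\fa$ itself, where the identity of heads is obvious. Your socle-side sketch gestures at the issue ("a small amount of care is needed to identify $\soc_{\wgr\fa}$ with $\soc_\fa$") but does not resolve it; the paper's reduction to $P^\sharp(\mu)=\fa$ via projectivity is exactly what closes that gap, and it is worth making explicit. (Note that without such an argument, a $\wgr\fa$-socle element could a priori live in a negative graded degree of $\gr^\diamond Q^\sharp(\mu)$; the projectivity/injectivity is what rules this out.)

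For (b), your argument for the first inclusion and for the embedding $\rnabla(\mu)\hookrightarrow Q^\sharp(\mu)$ is fine, if slightly different in route from the paper's (which simply goes through $\rnabla(\mu)\subseteq\nabla(\mu)\subseteq Q^\sharp(\mu)$). However, your argument that $\rnabla(\mu)\subseteq\wsoc^{-1}Q^\sharp(\mu)$ hinges on the claim that $\rnabla(\mu_0)$ is $u$-semisimple. That claim is false in general absent the LCF; in fact its truth is essentially equivalent to the content of part (c), and part (b) must be proved without it. The correct and short argument, which the paper uses, is that $\wrad\wfa=\wfa\cap\rad\wfa_K$ already kills the lattice $\wrnabla(\mu)$ at the $\sO$-level, because $\rad\wfa_K$ annihilates the $\wfa_K$-semisimple module $L_K(\mu)$; reducing mod $\pi$, $\overline{\wrad\wfa}$ kills $\rnabla(\mu)$. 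This does \emph{not} require $\rad\fa$ to kill $\rnabla(\mu)$, which is exactly the difference between what (b) says and what (c) says.

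For (c), your proposal goes off in the wrong direction. You try to show that $(\gr^\diamond Q^\sharp(\mu))_0=\wsoc^{-1}Q^\sharp(\mu)$ equals $\nabla_p(\mu)$ by tracking the graded $\gr\wnabla$-filtration of $\gr^\diamond\wQ^\sharp(\mu)$ from Corollary \ref{cor3.3} into grade $0$; but $(\gr^\diamond\wQ^\sharp(\mu))_0$ is \emph{not} $\wrnabla(\mu)$ in general --- it has a $\wrnabla$-filtration whose bottom term is $\wrnabla(\mu)$ with possibly further sections above (this is precisely the analysis carried out in Appendix II of the paper, which only manages to isolate the $\Gamma$-truncation $(\wsoc^{-1}Q^\sharp(\gamma))^\Gamma\cong\rnabla(\gamma)$). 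So that bookkeeping cannot by itself pin down $\wsoc^{-1}=\nabla_p(\mu)$. The actual argument is much simpler and entirely global: under the LCF, $\dim L_\zeta(\lambda)=\dim L(\lambda)$ for all restricted regular $\lambda$, so the heads of $\wfa_K$ and $\fa$ have the same dimension; hence the nilpotent ideal $\overline{\wrad\wfa}\subseteq\rad\fa$ has the same codimension as $\rad\fa$, forcing $\overline{\wrad\wfa}=\rad\fa$. It follows immediately that $\wsoc^{-1}Q^\sharp(\mu)=\soc_\fa Q^\sharp(\mu)=\nabla_p(\mu)$, and the chain of inclusions in (b) collapses. This dimension count is the missing idea in your sketch; once you have $\overline{\wrad\wfa}=\rad\fa$ there is nothing further to do.
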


\begin{proof} The first claimed isomorphism in part (a) is clear from the definitions.  For the second isomorphism,
it suffices to prove the dual statement that the $\fa$-head of $P^\sharp(\mu)$ is isomorphic to the 
$\wgr\fa$-head of $\wgr P^\sharp(\mu)$. (Both heads are $\fa/(\wrad\wfa)\fa  =(\wgr\fa)_0$-modules.) The
$\fa$-head of $P^\sharp(\mu)$ is $P^\sharp(\mu)/\rad\fa P^\sharp(\lambda)$, a homomorphic image
of $\wgr P^\sharp(\mu)_0$. To check that this natural homomorphism induces an isomorphism on
heads, it is enough to check the corresponding statement with $P^\sharp(\mu)$ replaced by $\fa$, since
$P^\sharp(\mu)$ is $\fa$-projective. Here,
however, it is obvious. Thus, (a) is proved.

To see  part (b), observe that the first inclusion follows from the definitions. Next, we assert that $\rnabla(\mu)\subseteq Q^\sharp(\mu)$. We can assume that $\mu\in X_1(T)$, using Proposition \ref{Lin}(a). 
By Proposition 2.3(b), $Q^\sharp(\mu)$ has a $\rnabla$-filtration and $G$-socle $L(\mu)$. Thus, $\nabla(\mu)
$ is a submodule of $Q^\sharp(\mu)$. On the other hand, $\rnabla(\mu)\subseteq \nabla(\mu)$, proving
our assertion. 
 Finally,  $\wrad\fa$
acts trivially on $\rnabla(\mu)$, since this module arises by base change from a lattice in
an irreducible $\wA_K$-module. Thus, the second containment holds. 

Finally,  observe that  if LCF holds for $G$, then
 the heads of $\wfa_K$ and $\fa$ have the same dimensions.
  Thus, the nilpotent ideal $(\wrad\wfa)_k$ has codimension equal to that of $\rad\fa$, so must be equal to $\rad\fa$. Therefore, $\wsoc^{-1} Q^\sharp(\mu)$ is the $\fa$-socle of $Q^\sharp(\mu)$, namely, $\nabla_p(\mu)$. This clearly implies (c).
\end{proof} 

{\it For the rest of this section, it is assumed (in additions to Assumptions \ref{assumptions} and (\ref{w0})  that the LCF holds for $G$.  Equivalently, by Corollary \ref{LCF}, $\rDelta(\lambda)=\Delta^p(\lambda)$, for all
$\lambda\in X(T)_+$.}

 Recall the convention that, if $M$ is a $\mathbb Z$-graded module, then $M(a)$ is the $\mathbb Z$-graded
 module defined by putting
$M(a)_b:=M_{b-a}$, for integers $a,b$. 

 \begin{lem}\label{numlemma} Let $M\in A_\Gamma$--mod, $\mu\in\Gamma$, and $r\in\mathbb N$. Assume,  for each non-negative integer $s<r$, that $M_{\widetilde s}$ has a $\rDelta$-filtration.
Then there is an isomorphism
$$f:\grhom_{\wgr A}(\wgr M(-r),\wgr^\diamond Q^\sharp(\mu))\overset\sim\longrightarrow \Hom_A(M_{\widetilde r},\rnabla(\mu))=\Hom_{\wgr A}(M_{\widetilde r},\rnabla(\mu))$$
given by restriction.\end{lem}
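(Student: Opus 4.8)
The plan is to realize $f$ as the operation ``restrict a graded homomorphism to its degree-zero component,'' and to control its kernel and cokernel by feeding a single short exact sequence of graded $\wgr A$-modules into the long exact $\grExt$-sequence. Injectivity will come from a cogeneration property of $\wgr^\diamond Q^\sharp(\mu)$, while surjectivity rests on a $\grExt^1$-vanishing that is the real content of the lemma.

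First I would make the two sides explicit. One has $(\wgr M(-r))_0=(\wgr M)_r=M_{\widetilde r}$, and, since the LCF is in force, Lemma \ref{preliminaryLCF}(c) gives $(\wgr^\diamond Q^\sharp(\mu))_0=\wsoc^{-1}Q^\sharp(\mu)=\rnabla(\mu)$. Both $M_{\widetilde r}$ and $\rnabla(\mu)$ are killed by $\wrad\wfa$ (for $M_{\widetilde r}$ by the definition of $\wgr$; for $\rnabla(\mu)$ because it reduces from a lattice in an irreducible, hence $\rad\wA_K$-semisimple, $\wA_K$-module, exactly as in the proof of Lemma \ref{preliminaryLCF}(c)); since $(\wrad\wfa)\wA=\wrad\wA$ by Corollary \ref{Aistight}(a), both are modules over $(\wgr A)_0$, so $\Hom_A$, $\Hom_{(\wgr A)_0}$ and $\Hom_{\wgr A}$ between them coincide, which is the last equality in the statement. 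Restriction to the degree-zero component then gives the map $f$ of the statement.

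Next consider the short exact sequence of graded $\wgr A$-modules
\[
0\longrightarrow (\wgr M(-r))_{\geq 0}\longrightarrow \wgr M(-r)\longrightarrow \bar N\longrightarrow 0,
\]
where $(\wgr M(-r))_{\geq 0}$ is the graded submodule spanned by the homogeneous components of non-negative degree (a submodule because $\wgr A$ is non-negatively graded), so that $\bar N$ is supported in degrees $-r,\dots,-1$, with $\bar N$ in degree $s-r$ equal to $M_{\widetilde s}$ for $0\leq s\leq r-1$. A graded map from a non-negatively graded module to a non-positively graded one is concentrated in degree $0$, whence $\grhom_{\wgr A}((\wgr M(-r))_{\geq 0},\wgr^\diamond Q^\sharp(\mu))=\Hom_{(\wgr A)_0}(M_{\widetilde r},\rnabla(\mu))=\Hom_A(M_{\widetilde r},\rnabla(\mu))$, and under this identification the restriction map $\grhom_{\wgr A}(\wgr M(-r),-)\to\grhom_{\wgr A}((\wgr M(-r))_{\geq 0},-)$ is precisely $f$. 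It therefore suffices to prove $\grhom_{\wgr A}(\bar N,\wgr^\diamond Q^\sharp(\mu))=0$ (injectivity of $f$) and $\grExt^1_{\wgr A}(\bar N,\wgr^\diamond Q^\sharp(\mu))=0$, or at least that the connecting map is zero (surjectivity of $f$). For the first: $\gr\wP^\sharp(\mu)$ is generated in degree $0$ (the associated graded of any module for its radical filtration is), so its $\sO$-dual $\gr^\diamond\wQ^\sharp(\mu)$ is cogenerated in degree $0$; as $\wQ^\sharp(\mu)$ is $\wfa$-tight (on restriction to $\wfa$ it is a sum of copies of the projective lattice $\wQ^\sharp(\mu_0)$, which is tight by Corollary \ref{cor3.9}), the dual of Corollary \ref{tightcor}(c) identifies $\wgr^\diamond Q^\sharp(\mu)$ with $\overline{\gr^\diamond\wQ^\sharp(\mu)}$, still cogenerated in degree $0$, so its graded socle lies in degree $0$; hence any nonzero graded submodule meets degree $0$, while the image of any graded map out of $\bar N$ is a graded submodule concentrated in strictly negative degrees, so it must vanish.

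The main obstacle is the $\grExt^1$-vanishing, and it is here that the hypothesis enters. Because the LCF holds, $\rDelta(\lambda)=\Delta^p(\lambda)=L(\lambda_0)\otimes\Delta(\lambda_1)^{[1]}$ is $\fa$-semisimple, hence a $(\wgr A)_0$-module; since each $M_{\widetilde s}$ with $s<r$ has a $\rDelta$-filtration, $\bar N$ acquires a filtration by graded $\wgr A$-modules $\rDelta(\lambda)(c)$ with $c\in\{-r,\dots,-1\}$, i.e. by standard-type modules concentrated in a single strictly negative degree. Dually to Corollary \ref{cor3.3} (again using $\wfa$-tightness), $\wgr^\diamond Q^\sharp(\mu)$ has a filtration by reductions of the graded costandard modules of $\gr\wA$, placed in degrees $\leq 0$, only the bottom section meeting degree $0$. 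One is thereby reduced to $\grExt^1_{\gr A}(\rDelta(\lambda)(c),-)=0$ against each such costandard section, for $c<0$. Writing $\rDelta(\lambda)$ as the degree-zero quotient of the graded standard module $\gr\Delta(\lambda):=\overline{\gr\wDelta(\lambda)}$, which has simple head $L(\lambda)$ in degree $0$ by the simple-head property of $\gr\wDelta(\lambda)$ from \cite{PS10}, and running $0\to(\gr\Delta(\lambda))_{\geq 1}\to\gr\Delta(\lambda)\to\rDelta(\lambda)\to 0$ through the long exact sequence: the $\gr\Delta(\lambda)$-term vanishes by the $\Ext$-orthogonality of standard and costandard modules in the graded quasi-hereditary algebra $\gr\wA$, and the $(\gr\Delta(\lambda))_{\geq 1}$-term is supported in strictly positive internal degrees, which against costandard sections in degrees $\leq 0$, after the shift by $c<0$, cannot produce the required extension class. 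I expect this degree bookkeeping to be cleanest when organized as an induction on $r$, peeling off the top layer $M_{\widetilde{r-1}}$ (concentrated in degree $-1$) and invoking the case $r-1$. A more conceptual alternative — proving directly that $\wgr^\diamond Q^\sharp(\mu)$ is injective in the category of graded $\wgr A$-modules, from the $\fa$-injectivity of $Q^\sharp(\mu)$ — would kill the $\grExt^1$ formally, but appears to require Koszul-type input not developed in this paper.
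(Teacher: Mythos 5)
Your setup is a clean repackaging of the right picture, and your injectivity argument is sound and essentially the one in the paper: the image of a graded map out of the strictly negatively graded module $\bar N$ is a graded $\wgr A$-submodule of $\wgr^\diamond Q^\sharp(\mu)$ concentrated in degrees $\leq -1$, and since the $\wgr\fa$-socle of $\wgr^\diamond Q^\sharp(\mu)$ lies in degree $0$, such a submodule must vanish. This is the same fact the paper uses, just factored through your short exact sequence.

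The surjectivity half, however, has a genuine gap, and it is one you are honest about. You reduce to showing $\grExt^1_{\wgr A}(\bar N, \wgr^\diamond Q^\sharp(\mu))=0$ (or at least that the connecting map vanishes), and propose to filter $\bar N$ by shifted modules $\rDelta(\lambda)(c)$, $c<0$, and $\wgr^\diamond Q^\sharp(\mu)$ by costandard sections, reducing to $\grExt^1_{\gr A}(\rDelta(\lambda)(c),C)=0$. Running the long exact sequence from $0\to(\gr\Delta(\lambda))_{\geq 1}(c)\to\gr\Delta(\lambda)(c)\to\rDelta(\lambda)(c)\to 0$ does kill the term $\grExt^1(\gr\Delta(\lambda)(c),C)$ by standard/costandard orthogonality in the graded QHA $\gr\wA$, but it leaves you with the group $\grhom_{\gr A}\bigl((\gr\Delta(\lambda))_{\geq 1}(c),C\bigr)$ feeding into the obstruction. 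This $\grhom$ is not obviously zero: for $c=-1$, the module $(\gr\Delta(\lambda))_{\geq 1}(-1)$ is generated in degree $0$ by $(\gr\Delta(\lambda))_1$, and a degree-$0$ costandard section $C$ has $C_0\cong\rnabla(\nu)$, so the graded $\Hom$ reduces to $\Hom_{(\gr A)_0}((\gr\Delta(\lambda))_1,\rnabla(\nu))$, which can certainly be nonzero. To finish along your lines one would have to show every such map lifts to $\gr\Delta(\lambda)(-1)$, which is more or less the original problem in a different guise; your remark about Koszul-type input (which is indeed not available in this paper) reflects the difficulty.

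The paper sidesteps the graded $\grExt^1$ question entirely. It establishes the \emph{ungraded} vanishing $\Ext^1_A(M_{\widetilde s},Q^\sharp(\mu))=0$ for $s<r$ from the $\rDelta$-filtration hypothesis together with Propositions \ref{Extvanishcriterion} and \ref{Lin}(b), then uses the long exact sequence for $\Ext^\bullet_A(-,Q^\sharp(\mu))$ to lift a given $g:M_{\widetilde r}\to\rnabla(\mu)$ to $h:M/(\wrad^{r+1}\wfa)M\to Q^\sharp(\mu)$, observes that $h$ factors through $\wsoc^{-r-1}Q^\sharp(\mu)$, and finally applies the $\wgr$ functor to produce the required graded map to $\wgr^\diamond Q^\sharp(\mu)$. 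In other words, the graded lift is constructed directly from an ungraded lift, not extracted from a graded $\Ext$-vanishing theorem. If you want to complete your argument, you should replace the appeal to a conjectural $\grExt^1$-vanishing by this lift-then-grade maneuver.
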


\begin{proof}  By Lemma \ref{preliminaryLCF}(c) (which uses the LCF) and the definition (\ref{grdiamond}), $(\wgr^\diamond Q^\sharp(\mu))_0$ identifies with $\rnabla(\mu)$ as $(\wgr A)_0\cong\wgr A/(\wgr A)_{>0}$-modules.  Also, the action of
$A$ on $M_{\widetilde r}
:=(\wrad^r\wfa) M/(\wrad^{r+1}\wfa)M$ factors through the quotient $A\to (\wgr A)_0$. 
It follows that there is a map  the map 
$$f:\grhom_{\wgr A}(\wgr M(-r),\wgr^\diamond Q^\sharp(\mu))\overset\sim\longrightarrow \Hom_A(M_{\widetilde r},\rnabla(\mu))$$
which is induced by restriction to the homogeneous piece $M_{\widetilde r}$ of $\wgr M(-r)$.

\medskip\noindent
\underline{The map $f$ is injective.}
To see this, observe that, since any element $x$ of the left hand
side, if nonzero, must have a nonzero image in $\wgr^\diamond Q^\sharp(\mu)$. Any such image is a $\wgr\fa$-submodule of $\wgr^\diamond Q^\sharp(\mu)$, and must intersect the $\wgr\fa$-socle
of $\wgr^\diamond Q^\sharp(\mu)$, which is contained in $\wsoc^{-1} Q^\sharp(\mu)$. (Actually, we have
equality, but an inclusion would be sufficient, as would follow just from knowing that $\nabla_p(\mu)=\soc^{-1}Q^\sharp(\mu)^\Gamma$,
the largest submodule of $\wsoc^{-1}Q^\sharp(\mu)$ with composition factors having highest weights
in $\Gamma$. This equality is also sufficient to define $f$. We will return to this issue in \S7.)
 
  But the image of $x$ is a direct sum of terms $x(M_{\tilde s})\subseteq (\wgr^\diamond Q^\sharp(\mu))_{s-r}$, with $s\geq 0$.
Thus, $x(M_{\widetilde r})\not=0$. Therefore, $f$ is injective.

\medskip
\noindent
\underline{The map $f$ is surjective.}
The hypothesis on $\rDelta$-filtrations implies
that 
$$\Ext^1_A(M_{\tilde s},Q^\sharp(\mu))=0, \quad {\text{\rm for}}\,\,s<r,$$
 by Proposition \ref{Lin}(b). Thus, by the long exact
sequence for $\Ext^\bullet_A(-,Q^\sharp(\mu))$,  there is a surjection
\begin{equation}\label{surjection}\Hom_A(M/(\wrad^{r+1}\wfa)M,Q^\sharp(\mu))\twoheadrightarrow
\Hom_A(M_{\widetilde r},Q^\sharp(\mu))\cong\Hom_A(M_{\widetilde r},\rnabla(\mu))\end{equation}
which factors through the inverse of the natural isomorphism
$$\Hom_A(M/(\wrad^{r+1}\wfa)M,\wsoc^{-r-1}Q^\sharp(\mu))\overset\sim\longrightarrow\Hom_A(M/(\wrad^{r+1}\wfa) M,Q^\sharp(\mu)).$$
Notice that 
any $\wfa$-submodule $N$ of $Q^\sharp(\mu)$ having $(\wrad^{r+1}\wfa)N=0$ must be contained in $\wsoc^{-r-1}Q^\sharp(\mu)$.

Now, given $g\in\Hom_A(M_{\widetilde r}, \rnabla(\mu))$, also regarded as a map to $Q^\sharp(\mu)$, 
we can therefore choose
$$h\in\Hom_A(M/(\wrad^{r+1}\wfa)M,Q^\sharp(\mu)),\quad h|_{M_{\widetilde r}}=g.$$
 Of
course,  $h'|_{M_{\widetilde r}}=g$, where $h'\in\Hom_A(M/(\wrad^{r+1}\wfa)M,\wsoc^{-r-1} Q^\sharp(\mu))$ is induced by $h$. We also have $(\wgr h')|_{M_{\widetilde r}}=g$, where
$\wgr h':\wgr(M/(\wrad^{r+1}\wfa)M)\longrightarrow\wgr(\wsoc^{-r-1} Q^\sharp(\mu))$ given by applying the $\wgr$-functor. Here, by a mild abuse of
notation,  $g$ can be identified with the map it induces $\wgr(M/(\wrad^{r+1}\wfa) M)_r\cong M_{\widetilde r}\longrightarrow \wgr(\wsoc^{-r-1} Q^\sharp(\mu))_r\subseteq
\wsoc^{-1} Q^\sharp(\mu)\cong\rnabla(\mu)$. 

Consider the diagram 
\[
\begin{CD} \wgr M(-r) @>\tau>> \wgr(M/(\wrad^{r+1}\wfa)M)(-r) \\
@.  @VV(\wgr h')(-r)V\\
\wgr^\diamond(\wsoc^{-r-1}Q^\sharp(\mu)) @<\theta<< \wgr(\wsoc^{-r-1} Q^\sharp(\mu))(-r)\\
@V\sigma VV @. \\
\wgr^\diamond Q^\sharp(\mu) 
\end{CD}
\]
in which  $\tau$ is obtained by applying
$\wgr$ to the quotient map $M\to M/(\wrad^{r+1}\wfa)M$, $\theta$ is the obvious natural map of $\wgr A$-modules, and $\sigma$ is the natural inclusion.
The composition of the maps defines a graded map 
 $$h^{\prime\prime}:=\sigma\circ\theta\circ (\wgr h')(-r)\circ\tau:\wgr M(-r)\to\wgr^\diamond Q^\sharp(\mu)$$
 which, in grade 0---that is, on $M_{\widetilde r}\cong(\wgr M(-r))_0$---identifies with $g$. Therefore, $f$ is surjective.
 
 It follows that $f$ is an isomorphism.
 \end{proof}
 
   We are now ready for the main theorems of this section. As its proof shows, the first theorem below is a formality for
  general QHAs, though it is formulated for our context here.

\begin{thm}\label{secondmainnum}Suppose that $N\in\Amod$ is annihilated by $\wrad\wfa$. Then $N$ has a $\rDelta$-filtration if and only
if
\begin{equation}\label{numformula}\dim\,N=\sum_{\mu\in\Lambda}\dim\Hom_A(N,\rnabla(\mu))\dim\rDelta(\mu).\end{equation}

 \end{thm}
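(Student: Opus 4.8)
The plan is to recognize that the module $N$, being annihilated by $\wrad\wfa$, is a module for the quotient algebra $(\wgr A)_0\cong A/(\wrad\wfa)A$, and that this quotient algebra is a quasi-hereditary algebra whose standard modules are (isomorphic to) the $\rDelta(\mu)$ and whose costandard modules are the $\rnabla(\mu)$. Indeed, by Corollary~\ref{ZeroDegreeQHA} (applied after base change to $k$, or rather its $k$-form obtained by reducing $\wB=(\gr\wA_\Lambda)_0$ mod $\pi$) the algebra $\wgr A/(\wgr A)_{>0}=\overline{\wB}$ is a split quasi-hereditary algebra with weight poset $\Lambda$, standard modules $\rDelta(\mu)$ and costandard modules $\rnabla(\mu)$, $\mu\in\Lambda$. (One must also use Corollary~\ref{LCF}(c) to identify $\rDelta(\mu)$ with $\Delta^p(\mu)$, but that identification is not strictly needed here --- only the QHA structure.) Once this is in place, the statement is the standard numerical criterion for when a module over a quasi-hereditary algebra has a $\Delta$-filtration.

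Concretely, first I would recall the two standard facts about a split QHA $B$ with standard modules $\{\Delta(\mu)\}$ and costandard modules $\{\nabla(\mu)\}$: (i) $\dim\Hom_B(M,\nabla(\mu))$ equals the multiplicity $[M:\Delta(\mu)]_\Delta$ of $\Delta(\mu)$ as a section in a $\Delta$-filtration of $M$ \emph{whenever $M$ has such a filtration}, because $\Ext^1_B(\Delta(\nu),\nabla(\mu))=0$ and $\Hom_B(\Delta(\nu),\nabla(\mu))=\delta_{\nu\mu}k$; and (ii) for an \emph{arbitrary} module $M$, the Euler-type count gives $\dim M=\sum_\mu [\,\text{something}\,]$ only under a filtration hypothesis, while in general one has the inequality coming from the fact that $\Hom_B(M,\nabla(\mu))$ is computed from the top of a $\Delta$-presentation. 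The cleanest route is: apply $\Hom_B(-,\nabla(\mu))$ to a projective cover $P\twoheadrightarrow M$; since each indecomposable projective $P(\nu)$ has a $\Delta$-filtration, $\dim\Hom_B(P(\nu),\nabla(\mu))=[P(\nu):\Delta(\mu)]_\Delta=[\nabla(\mu):L(\nu)]$ (BGG reciprocity), and one deduces that $\sum_\mu\dim\Hom_B(M,\nabla(\mu))\dim\Delta(\mu)\le\dim M$ always, with equality if and only if $M$ has a $\Delta$-filtration. Thus the ``only if'' direction of the theorem follows from fact (i) and additivity of dimension along the filtration, and the ``if'' direction follows from the equality case of this inequality.

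For the forward direction in detail: if $N$ has a $\rDelta$-filtration with sections $\rDelta(\mu)$ occurring with multiplicity $m_\mu$, then $\dim N=\sum_\mu m_\mu\dim\rDelta(\mu)$ by additivity, and applying $\Hom_A(-,\rnabla(\mu))$ along the filtration --- using $\Ext^1(\rDelta(\nu),\rnabla(\mu))=0$ to split off each step and $\Hom(\rDelta(\nu),\rnabla(\mu))\cong\delta_{\nu\mu}k$ --- gives $\dim\Hom_A(N,\rnabla(\mu))=m_\mu$, so (\ref{numformula}) holds. For the converse, one chooses a $\Delta$-filtered projective cover or, more efficiently, argues by induction on $\dim N$: let $\mu$ be maximal in $\Lambda$ with $\Hom_A(N,\rnabla(\mu))\neq0$; maximality forces any such nonzero map to be surjective onto $\rDelta(\mu)$ (its image contains a generator of weight $\mu$, and has only composition factors $L(\nu)$, $\nu\le\mu$, matching $\rDelta(\mu)$), giving an exact sequence $0\to N'\to N\to\rDelta(\mu)\to0$; one checks $\dim\Hom_A(N',\rnabla(\tau))=\dim\Hom_A(N,\rnabla(\tau))$ for $\tau\neq\mu$ and has dropped by one at $\tau=\mu$, so the numerical identity descends to $N'$, and induction finishes.

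The main obstacle --- really the only subtle point --- is to make sure the quasi-hereditary apparatus is legitimately available for $N$: that $N$, as a module over $A$ killed by $\wrad\wfa$, is the same thing as a module over the QHA $\overline{\wB}=(\wgr A)_0$ with the expected standard and costandard objects. This requires invoking Corollary~\ref{ZeroDegreeQHA} together with the identification $(\wgr A)_0\cong A/(\wrad\wfa)A$ and base-changing the integral QHA statement to $k$; in particular one needs that $\rDelta(\mu)$ and $\rnabla(\mu)$ are genuinely the standard and costandard modules of this $k$-algebra and that the sum over $\mu\in\Lambda$ (rather than over a smaller ideal) is harmless because modules in $A_\Gamma$-mod only involve $\mu\in\Gamma\subseteq\Lambda$. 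Everything after that is the textbook proof of the numerical $\Delta$-filtration criterion for quasi-hereditary algebras, which the proof can legitimately quote as ``a formality for general QHAs'' as the statement of the theorem already advertises.
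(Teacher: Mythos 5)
Your identification of the setting is exactly right and matches the paper: by Corollary~\ref{ZeroDegreeQHA} together with Corollary~\ref{tightcor}(c) (using that $\wA$ is $\wfa$-tight), the hypothesis that $\wrad\wfa$ annihilates $N$ means $N$ is a module for the $k$-algebra $A/(\wrad\wfa)A\cong\overline{(\gr\wA)_0}$, and this is a split QHA with standard modules $\rDelta(\mu)$ and costandard modules $\rnabla(\mu)$. Once that is in place, the ``only if'' direction you give (additivity along a $\rDelta$-filtration plus the facts $\Hom(\rDelta(\nu),\rnabla(\mu))\cong\delta_{\nu\mu}k$ and $\Ext^1(\rDelta(\nu),\rnabla(\mu))=0$) is correct and is essentially what the paper uses as well.

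The ``if'' direction in your proposal, however, has a real gap, and in fact the auxiliary inequality you state is the reverse of the correct one. You claim that for a QHA $B$, $\sum_\mu\dim\Hom_B(M,\nabla(\mu))\dim\Delta(\mu)\le\dim M$ holds always. Take $M=L(\mu)$ with $\Delta(\mu)\neq L(\mu)$: since $\soc\nabla(\lambda)=L(\lambda)$ is simple, $\Hom(L(\mu),\nabla(\lambda))$ is one-dimensional for $\lambda=\mu$ and zero otherwise, so the left-hand side is $\dim\Delta(\mu)>\dim L(\mu)$. Thus the true general inequality is $\dim M\le\sum_\mu\dim\Hom_B(M,\nabla(\mu))\dim\Delta(\mu)$, as the paper states; your projective-cover derivation also does not go through (it would only bound the sum by $\dim P$, not $\dim M$). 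The alternative induction you sketch is likewise flawed: a nonzero map $N\to\rnabla(\mu)$ has image a nonzero submodule of $\rnabla(\mu)$, hence with simple socle $L(\mu)$, whereas $\rDelta(\mu)$ has simple head $L(\mu)$; these are generally not isomorphic, so the claimed short exact sequence $0\to N'\to N\to\rDelta(\mu)\to0$ is not produced. The paper's actual proof uses Scott's theorem on \emph{semistandard filtrations} (\cite{S2}): every module over a QHA has a filtration whose sections are nonzero homomorphic images of standard modules, with the number of $\mu$-sections equal to $\dim\Hom(N,\rnabla(\mu))$. This is precisely what yields the correct inequality $\dim N\le\sum$ together with the characterization of equality (equality forces each section to be the full $\rDelta(\mu)$). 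You should cite or reproduce that result rather than rely on the incorrect elementary arguments; the ``formality for general QHAs'' the paper advertises is this semistandard-filtration lemma, which is not as elementary as your sketch assumes.
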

 
 \begin{proof} By Corollary \ref{ZeroDegreeQHA}, 
$\wA/\wrad\wA=(\gr \wA)_0$ is an integral QHA with standard (resp., costandard) objects $\wrDelta(\lambda)$ (resp., $\wrnabla(\lambda)$).  Because $\wA$ is $\wfa$-tight, 
$$\overline{\wA/\wrad\wA}
\cong A/\wrad\wfa A$$
 by (\ref{identity}) in Corollary \ref{tightcor}. Now $N$ has a semistandard filtration in the sense of \cite{S2}, with the multiplicity with which a
nontrivial homomorphic image of a given $\rDelta(\mu)$ appears equal to $\dim\Hom_A(N,\rnabla(\mu))$. Thus,
$$\dim N\leq\sum_{\mu\in\Lambda}\dim\Hom_A(N,\rnabla(\mu))\dim\rDelta(\mu)$$ 
Equality holds if and only if each homomorphic image of a $\rDelta(\mu)$ appearing is actually isomorphic to $\rDelta(\mu)$, in which case
the filtration is a $\Delta$-filtration for the QHA $A/\wrad \wfa A$, i.~e., a $\rDelta$-filtration.
 \end{proof}

An $A$-module $N$ is said to have a tight lifting if $N\cong\overline{\wN}$ for some tight $\wA$-lattice $\wN$.  This implies that $\wgr N\cong {\overline{\gr\wN}}$ by Corollary \ref{tightcor}(c). Also, we 
recall from Corollary \ref{tightcor}(a) that any $\wA$-lattice is $\wA$-tight if and only if it is $\wfa$-tight. Notice if $N$ is an $A_\Gamma$-module, with a tight lifting or not, and $\wN$ is an $\wA_\Lambda$-lattice with $\overline{\wN}\cong N$, then $\wN$ is necessarily
an $\wA_\Gamma$-lattice.  ($\wN$ has a filtration with sections which are lattices $\wL_\lambda$ in irreducible
$\wA_K$-modules $L_K(\lambda)$, $\lambda\in\Lambda$. Then $L(\lambda)$ is always a
composition factor of $\overline\wL_\lambda$, forcing $\lambda\in\Gamma$.) Thus, any lift on $N$, tight
or not, is necessarily an $\wA_\Gamma$-lattice. 
  Also, $\gr\wN$ (defined using $\wA_\Lambda$) is a $\gr\wA_\Gamma=
(\gr\wA)_\Gamma$-module.
\begin{thm}\label{thm4.4}
Suppose an $A_\Gamma$-module $M$ has an $\wA$-tight lifting to a lattice $\wM$ for $\wA$. Then $M_{\widetilde s}$ has a $\rDelta$-filtration, for each $s\geq 0$, if and only if
\begin{equation}\label{Donkin}\Ext^1_{\gr\wA}(\gr\wM,\gr^\diamond \wQ^\sharp(\mu))=0,\quad\forall
\mu\in\Gamma.\end{equation}\end{thm}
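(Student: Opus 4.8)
The plan is to reduce the claimed equivalence to a graded version of the numerical criterion in Theorem \ref{secondmainnum}, combined with the isomorphism of $\Hom$-spaces established in Lemma \ref{numlemma}. Since $\gr\wA$ is a graded integral QHA (by \cite[Thm. 5.3]{PS10}) with $\wB=(\gr\wA)_0$ the integral QHA of Corollary \ref{ZeroDegreeQHA}, the graded $\gr^\diamond\wQ^\sharp(\mu)$ plays the role of a graded injective/costandard-filtered module: by Corollary \ref{cor3.3}, $\gr^\diamond\wQ^\sharp(\mu)$ has a $\gr\wnabla$-filtration as a graded $\gr\wA$-module. First I would reduce mod $\pi$: because $\wM$ is tight, Corollary \ref{tightcor}(c) gives $\wgr\overline{\wM}\cong\overline{\gr\wM}$, so working over $k$ with $\wgr M$ ($M=\overline{\wM}$) is the same as reducing $\gr\wM$. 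Similarly $\gr^\diamond\wQ^\sharp(\mu)$ reduces to a graded $\wgr A$-module whose grade-$0$ piece is $\rnabla(\mu)$ by Lemma \ref{preliminaryLCF}(c). A standard base-change argument (as in \cite[Lem. 1.5.2]{CPSMem}, using that $\gr^\diamond\wQ^\sharp(\mu)$ has a $\gr\wnabla$-filtration so $\Ext^1$ is compatible with reduction) then shows \eqref{Donkin} is equivalent to the corresponding vanishing $\grExt^1_{\gr A}(\gr M,\gr^\diamond Q^\sharp(\mu))=0$ over $k$.

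Next I would run an induction on $s$, the key inductive hypothesis being that $M_{\widetilde 0},\dots,M_{\widetilde{s-1}}$ all have $\rDelta$-filtrations. Under this hypothesis Lemma \ref{numlemma} applies, giving the identification
\[
\grhom_{\wgr A}(\wgr M(-s),\wgr^\diamond Q^\sharp(\mu))\;\cong\;\Hom_A(M_{\widetilde s},\rnabla(\mu)).
\]
The point is to convert the $\Ext^1$-vanishing into the numerical equality \eqref{numformula} for $N=M_{\widetilde s}$. Consider the short exact sequence of graded $\wgr A$-modules obtained by truncating $\wgr M$ below degree $s$, i.e. $0\to \wF^{s+1}M/\wF^{>s'}M \to M/\wF^{>s'}M\to M/\wF^{s+1}M\to 0$ for $s'$ large; using the already-established $\rDelta$-filtrations on lower pieces and Proposition \ref{Lin}(b) (which gives $\Ext^1_A(M_{\widetilde t},Q^\sharp(\mu))=0$ for $t<s$, exactly as in the proof of Lemma \ref{numlemma}), the hypothesis \eqref{Donkin} forces the graded $\Hom$ onto $\wgr^\diamond Q^\sharp(\mu)$ to behave as if $\wgr M$ had a graded $\gr\wrDelta$-filtration through degree $s$. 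Dimension-counting — summing the identities from Lemma \ref{numlemma} over $\mu\in\Lambda$ and matching against $\dim M_{\widetilde s}$ using that $\wgr^\diamond Q^\sharp(\mu)$ has a $\gr\wnabla$-filtration — yields precisely \eqref{numformula} for $M_{\widetilde s}$, whence Theorem \ref{secondmainnum} gives the $\rDelta$-filtration of $M_{\widetilde s}$ and the induction continues. The converse direction is easier: if every $M_{\widetilde s}$ has a $\rDelta$-filtration, then $\wgr M$ (hence $\gr\wM$, hence $\gr\wM$ over $\sO$) has a $\gr\wrDelta$-filtration as a graded $\gr\wA$-module, and $\grExt^1_{\gr\wA}(\gr\wrDelta(\lambda),\gr^\diamond\wQ^\sharp(\mu))=0$ because $\gr^\diamond\wQ^\sharp(\mu)$ has a $\gr\wnabla$-filtration and the QHA axioms give $\Ext^1(\Delta,\nabla\text{-filtered})=0$, summed over grade shifts and over the filtration.

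The main obstacle I anticipate is the careful bookkeeping of gradings and shifts: one must verify that the $\Ext^1$-vanishing in all internal degrees is exactly what is needed, that the truncation short exact sequences are admissible in the graded category $\gr\wA$-mod and compatible with reduction mod $\pi$ (this is where tightness of $\wM$, via Corollary \ref{tightcor}(c), and the purity of the $\wrad^n\wfa$-filtration are essential), and that the numerical sum in Lemma \ref{numlemma} really matches $\dim M_{\widetilde s}$ rather than overcounting across degrees — the injectivity half of Lemma \ref{numlemma}, i.e.\ that a graded map is detected in degree $0$, is what prevents this overcounting. A secondary subtlety is ensuring the induction is correctly seeded: the $s=0$ case should follow directly, since $M_{\widetilde 0}$ is the $\wgr A$-head-type quotient and \eqref{Donkin} in degree $0$ immediately gives the numerical criterion via Lemma \ref{numlemma} with $r=0$ (vacuous hypothesis). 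Finally, one should record that the equivalence is genuinely dependent on the LCF hypothesis in force in this section, since Lemma \ref{preliminaryLCF}(c), and hence the identification of $(\wgr^\diamond Q^\sharp(\mu))_0$ with $\rnabla(\mu)$, uses it.
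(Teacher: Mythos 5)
Your overall plan---reduce to the numerical criterion of Theorem \ref{secondmainnum} via the Hom-identification of Lemma \ref{numlemma}, inducting on $s$---is the right skeleton, and the converse direction is handled in essentially the same spirit as the paper. But there is a genuine gap in the forward direction, and one of your intermediate claims is not correct as stated.

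The central missing ingredient is this: you never use, nor even mention, that $(\gr^\diamond\wQ^\sharp(\mu))_K$ is an \emph{injective} $(\gr\wA)_K$-module. This is the observation that makes the whole argument run. It forces $\Ext^1_{\gr\wA}(\gr\wM,\gr^\diamond\wQ^\sharp(\mu))$ to be an $\sO$-torsion module, so that its vanishing is \emph{equivalent} to surjectivity of the reduction-mod-$\pi$ map $\Hom_{\gr\wA}(\gr\wM,\gr^\diamond\wQ^\sharp(\mu))\to\Hom_{\gr A}(\overline{\gr\wM},\overline{\gr^\diamond\wQ^\sharp(\mu)})$. It also pins down the $\sO$-rank of the left-hand $\Hom$ as $[(\gr\wM_K)_s:L_K(\mu)]$, because $(\gr^\diamond\wQ^\sharp(\mu))_K$ is the injective envelope of $L_K(\mu)$. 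Surjectivity then transfers this count to $\dim\grhom_{\gr A}(\wgr M(-s),\wgr^\diamond Q^\sharp(\mu))$, which by Lemma \ref{numlemma} is $\dim\Hom_A(M_{\widetilde s},\rnabla(\mu))$; summing over $\mu$ and using $\dim(\gr\wM_K)_s=\dim M_{\widetilde s}$ gives \eqref{numformula}. Your sketch instead invokes truncation short exact sequences and a loose claim that \eqref{Donkin} ``forces the graded $\Hom$ \dots to behave as if $\wgr M$ had a graded $\gr\wrDelta$-filtration''; this is not a precise reduction and would require exactly the dimension count you are trying to produce.

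Separately, your assertion at the start that \eqref{Donkin} is \emph{equivalent} to the $k$-level vanishing $\grExt^1_{\gr A}(\gr M,\gr^\diamond Q^\sharp(\mu))=0$ is not justified. The $k$-level vanishing does imply \eqref{Donkin} (this is exactly how Theorem \ref{firstmainnum} is deduced, via \cite[Lem.~1.5.2(c)]{CPSMem}), but the converse is not clear: from the long exact sequence for $0\to\gr^\diamond\wQ^\sharp(\mu)\overset{\pi}{\to}\gr^\diamond\wQ^\sharp(\mu)\to\overline{\gr^\diamond\wQ^\sharp(\mu)}\to 0$ one only learns that the $k$-level $\Ext^1$ injects into $\Ext^2_{\gr\wA}(\gr\wM,\gr^\diamond\wQ^\sharp(\mu))$, and the latter is torsion but not obviously zero. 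Your appeal to ``$\gr^\diamond\wQ^\sharp(\mu)$ has a $\gr\wnabla$-filtration so $\Ext^1$ is compatible with reduction'' would need $\gr\wM$ to have a $\gr\wDelta$-filtration to be effective, which is essentially the conclusion. Since the paper carefully states Theorem \ref{firstmainnum} as a one-way implication while Theorem \ref{thm4.4} is an equivalence, promoting the $k$-level condition to an ``if and only if'' is a genuinely stronger (and unproved) claim. Stick with the $\sO$-level vanishing throughout, and do the bookkeeping through surjectivity of the $\Hom$-reduction map.
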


\begin{proof}First, $(\gr^\diamond\wQ^\sharp(\mu))_K$ is $\gr \wA_K=(\gr\wA)_K$-injective, so that $\Ext^1_{\gr \wA}(\gr \wM,\gr^\diamond \wQ^\sharp(\mu))$ is an $\mathcal O$-torsion module. Hence, it is zero if and only if the reduction mod $\pi$
 map
 \begin{equation}\label{surj2}\Hom_{\gr\wA}(\gr\wM,\gr^\diamond \wQ^\sharp(\mu))\to\Hom_{\gr\wA}(\overline{\gr \wM},\overline{\gr^\diamond \wQ^\sharp(\mu)})\end{equation}
 is surjective. By the tightness hypothesis and Corollary \ref{tightcor}(c), $\overline{\gr\wM}\cong\wgr M$.  Also,
 $$\overline{\gr^\diamond \wQ^\sharp(\mu)}\cong\wgr^\diamond  Q^\sharp(\mu).$$
To see this isomorphism, first notice that, since 
 $\wP^\sharp(\mu)$ is a tight $\wfa$-lattice, we have $\overline{\gr\wP^\sharp(\mu)}=\wgr P^\sharp(\mu)$.
 Apply the duality functor $\frak d$ to both sides. On the left side, we get, using the discussion above Corollary
 \ref{cor3.3},
 $$\frak d(\overline{\wP^\sharp(\mu)})\cong\overline{\wdelta(\gr\wP^\sharp(\mu))}\cong\gr^\diamond\wdelta(
 \wP^\sharp(\mu))\cong\overline{\gr^\diamond\wQ^\sharp(\mu)}.$$
 Next, use the 
 general fact that, for any $A$-module $N$, there is a natural isomorphism ${\mathfrak d}(\wgr N)\cong\wgr^\diamond{\frak d}(N)$; we leave the easy proof to the
 reader. Thus, if $\frak d$ is applied to the right hand side $\wgr P^\sharp(\mu)$, we get
 $\wgr^\diamond{\frak d}( P^\sharp(\mu))\cong\wgr^\diamond Q^\sharp(\mu)$, as desired.

Now assume the $\Ext^1_{\gr \wA}$-vanishing hypothesis of the theorem holds for all $\mu\in\Gamma$. 
 Then, taking into account the grading, (\ref{surj2}) gives a surjection
 \begin{equation}\label{surj3}\grhom_{\gr\wA}(\gr\wM(-s),\gr^\diamond\wQ^\sharp(\mu))\twoheadrightarrow\grhom_{\gr \wA}(\wgr M(-s),\wgr^\diamond Q^\sharp(\mu)),\end{equation}
 for each integer $s$ and each $\mu\in\Gamma$. Observe that 
 $$\dim\grhom_{\gr A_K}(\gr\wM_K(-s),\gr^\diamond\wQ^\sharp(\mu))_K)= [(\gr\wM_K)_s:L_K(\mu)]$$
 since $(gr^\diamond\wQ^\sharp(\mu))_K$ is the injective envelope of $L_K(\mu)$ in $\gr\wA_K$--mod. 
 Thus, the  $\sO$-lattice $\grhom_{\gr\wA}(\gr\wM(-s),\gr^\diamond\wQ^\sharp(\mu))$ has
 rank  $[(\gr\wM_K)_s:L_K(\mu)]$.  By general principles (see, e.~g., \cite[Lem. 1.5.2(b)]{CPSMem}), for $\gr\wA$ lattices $\wX$ and $\wY$, we have  $\overline{\Hom_{\gr \wA}(\wX,\wY)}
 \subseteq\Hom_{\gr \wA}(\overline{\wX},\overline{\wY})$, so
 $$\dim\grhom_{\gr A}(\wgr M(-s),\wgr^\diamond Q^\sharp(\mu))=[(\gr\wM_K)_s:L_K(\mu)],$$
 since the map (\ref{surj3}) is a surjection.
 
 So we get, if $r$ is an integer satisfying the hypothesis of Lemma \ref{numlemma},
  $$\begin{aligned} \dim(\gr\wM_K)_r &=\sum_{\mu\in\Gamma}[(\gr\wM_K)_r:L_K(\mu)]\dim L_K(\mu)\\ &=\sum_{\mu\in\Gamma}\dim\Hom_A(M_{\widetilde r},\rnabla(\mu))\dim\rnabla(\mu).\end{aligned}$$
Here we are using the fact that $\wM$ is an $\wA_\Gamma$-lattice, by the discussion before the theorem.  
 Since $\dim(\gr\wM_K)_r=\dim M_{\widetilde r}$, an induction on $r$ proves that each $M_{\widetilde r}$ has a $\rDelta$-filtration. 
 
 Conversely, assume that each $M_{\widetilde s}$ has an $\rDelta$-filtration. Tracing back through the above discussion recovers the $\Ext^1$-vanishing in the statement of the theorem. We leave further
 details to the reader.\end{proof}
 
 Finally, he following result  is a consequence of Theorem \ref{thm4.4}, since the $\Ext^1$-vanishing in Theorem \ref{firstmainnum} implies the
 vanishing condition in Theorem \ref{thm4.4}; see \cite[Lem. 1.5.2(c)]{CPSMem}.
 
 \begin{thm}\label{firstmainnum}Suppose the $A_\Gamma$-module $M$ has a tight lifting. Then $M_{\tilde s}$ has a $\rDelta$-filtration, for each $s\geq 0$, provided that
$$\Ext^1_{\wgr A}(\wgr M,\wgr^\diamond Q^\sharp(\mu))=0,\quad\forall\mu\in\Gamma.$$
\end{thm}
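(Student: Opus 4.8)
The plan is to deduce Theorem \ref{firstmainnum} from Theorem \ref{thm4.4} by an integral lifting argument, combined with the identifications of reductions mod $\pi$ already recorded. First I would invoke the hypothesis that $M$ has a tight lifting, i.e., $M\cong\overline{\wM}$ for some $\wA$-tight lattice $\wM$. By Corollary \ref{tightcor}(c), tightness gives the physical equality $\wgr\wM=\gr\wM$ and the isomorphism $\wgr\overline{\wM}\cong\overline{\gr\wM}$, so that $\wgr M\cong\overline{\gr\wM}$. Likewise, since $\wP^\sharp(\mu)$ is a tight $\wfa$-lattice (Proposition \ref{prop2.5}(b) together with Corollary \ref{cor3.9}, or directly from $\wfa$-projectivity of $\wP^\sharp(\mu)$ after enlarging the poset), we get $\overline{\gr\wP^\sharp(\mu)}\cong\wgr P^\sharp(\mu)$, and applying the duality $\wdelta$ as in the proof of Theorem \ref{thm4.4} yields $\overline{\gr^\diamond\wQ^\sharp(\mu)}\cong\wgr^\diamond Q^\sharp(\mu)$. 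So the data appearing in the $\Ext^1$-hypothesis of Theorem \ref{firstmainnum} are exactly the mod-$\pi$ reductions of the data appearing in condition (\ref{Donkin}).

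The next step is the base-change comparison. The hypothesis of Theorem \ref{firstmainnum} is that $\Ext^1_{\wgr A}(\wgr M,\wgr^\diamond Q^\sharp(\mu))=0$ for all $\mu\in\Gamma$, where $\wgr A\cong(\gr\wA)_k$ by Corollary \ref{Aistight}(c). I would apply the standard base-change lemma \cite[Lem. 1.5.2(c)]{CPSMem}: for $\gr\wA$-lattices $\wX,\wY$ with $\Ext^1_{(\gr\wA)_k}(\overline{\wX},\overline{\wY})=0$, one concludes $\Ext^1_{\gr\wA}(\wX,\wY)=0$. Taking $\wX=\gr\wM$ (which equals $\wgr\wM$, hence is an honest lattice) and $\wY=\gr^\diamond\wQ^\sharp(\mu)$ (a lattice, being the dual of a lattice with a filtration by lattices), and using the reduction identifications from the previous paragraph, the vanishing hypothesis of Theorem \ref{firstmainnum} implies $\Ext^1_{\gr\wA}(\gr\wM,\gr^\diamond\wQ^\sharp(\mu))=0$ for all $\mu\in\Gamma$, which is precisely condition (\ref{Donkin}) of Theorem \ref{thm4.4}.

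Finally, with condition (\ref{Donkin}) in hand and the tight lifting $\wM$ of $M$ supplied by hypothesis, Theorem \ref{thm4.4} applies directly and yields that $M_{\widetilde s}$ has a $\rDelta$-filtration for every $s\geq 0$. This completes the proof. I would note that all the standing hypotheses—Assumptions \ref{assumptions}, the condition (\ref{w0}) on $\Lambda$, and the validity of the LCF for $G$—are already in force for the whole section, so they need not be reinvoked; they are used only insofar as Theorem \ref{thm4.4} uses them.

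The step I expect to be the only real point requiring care is checking that the objects to which \cite[Lem. 1.5.2(c)]{CPSMem} is applied are genuinely $\gr\wA$-\emph{lattices} (free of finite rank over $\sO$), so that the base-change machinery is legitimate: for $\gr\wM$ this is the content of tightness via Corollary \ref{tightcor}(c), while for $\gr^\diamond\wQ^\sharp(\mu)$ one uses that $\wQ^\sharp(\mu)$ is a lattice with a $\wnabla$-filtration (Proposition \ref{prop2.5}(b)), that each $\wnabla(\lambda)$ is a lattice, and that $\gr^\diamond$ of such a lattice is again free over $\sO$ by the purity of the socle filtration terms—the same purity argument used for $\gr$ in the notation section. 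Everything else is a bookkeeping chase through isomorphisms already established in \S3.
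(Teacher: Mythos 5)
Your proposal is correct and takes essentially the same route as the paper: the paper compresses the argument into one sentence, invoking the base-change lemma \cite[Lem.\ 1.5.2(c)]{CPSMem} to pass from the $\Ext^1$-vanishing over $\wgr A\cong(\gr\wA)_k$ to condition (\ref{Donkin}) over $\gr\wA$, and then applying Theorem \ref{thm4.4}. Your version spells out the intermediate identifications $\wgr M\cong\overline{\gr\wM}$ and $\wgr^\diamond Q^\sharp(\mu)\cong\overline{\gr^\diamond\wQ^\sharp(\mu)}$ and the lattice-verifications needed to invoke the base-change lemma, but these are exactly the ingredients already laid out in the proof of Theorem \ref{thm4.4} and implicitly used by the paper.
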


 \section{$p$-filtrations of Weyl modules}
\setcounter{equation}{0}

Let $G$ be a simple, simply connected algebraic group over an algebraically closed field $k$ of positive characteristic $p$. Making use of the results so far obtained, we prove the following theorem. The cases $s=0,1$ are consequences of earlier work in \cite{PS3} and \cite{CPS7}, which inspired the present paper.

\begin{thm}\label{MainTheorem}Assume that $p\geq 2h-2$ is an odd prime and that the LCF holds for $G$.
Given any $\gamma\in \Xreg $,  each section 
$\Delta(\gamma)_{\widetilde s}$, $s\in\mathbb N$, viewed as a rational $G$-module, has a $\rDelta$-filtration.  In particular,
$\Delta(\gamma)$ has a $\rDelta=\Delta^p$-filtration. \end{thm}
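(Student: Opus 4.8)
The plan is to obtain the theorem from the $\Ext^1$-criterion of Theorem~\ref{thm4.4}, applied to the $G$-module $M=\Delta(\gamma)$ with the tight lift supplied by the Weyl lattice $\wM:=\wDelta(\gamma)$. To set the stage, note that since $\gamma$ is $p$-regular the linkage principle forces every composition factor of $\Delta(\gamma)$ to have $p$-regular highest weight $\leq\gamma$; hence $\Delta(\gamma)\in\Gmod[\Gamma]$ for any finite ideal $\Gamma$ of $\Xreg$ containing $\gamma$. I would enlarge $\Gamma$ so that it also contains $\Xreg\cap X_1(T)$, and then choose a finite ideal $\Lambda\supseteq\Gamma$ of $\Xreg$ meeting the requirements of Assumptions~\ref{assumptions} and satisfying~(\ref{w0}) for every weight in $\Gamma$. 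Put $\wA=\wA_\Lambda$ and let $\wfa$ be the image of $\widetilde u_\zeta$ in $\wA$. Then $M:=\Delta(\gamma)$ is an $A_\Gamma$-module (so, by inflation, an $A_\Lambda$-module), and $\wM:=\wDelta(\gamma)$ is an $\wA_\Gamma$-lattice, hence an $\wA_\Lambda$-lattice, with $\overline{\wM}\cong M$.

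The first step is to see that $M$ has an $\wA$-tight lift, and that the relevant filtration of $\Delta(\gamma)$ is by rational $G$-submodules. By Corollary~\ref{cor3.9} the lattice $\wDelta(\gamma)$ is $\wfa$-tight, hence $\wA$-tight by Corollary~\ref{tightcor}(a); so $\wM$ is an $\wA$-tight lift of $M$, and Corollary~\ref{tightcor}(c) gives $\wgr M\cong\overline{\gr\wM}$. Moreover, because $\wM$ is tight, $(\wrad^s\wfa)\wM=\wrad^s\wM=\wM\cap\rad^s\wM_K$ is a pure $\wU_\zeta$-submodule of $\wM$; reducing mod $\pi$, $\wF^s\Delta(\gamma):=(\wrad^s\wfa)\Delta(\gamma)$ is therefore a rational $G$-submodule of $\Delta(\gamma)$, and the $\wF^s\Delta(\gamma)$ form a finite descending filtration of $\Delta(\gamma)$ with successive quotients the sections $\Delta(\gamma)_{\widetilde s}$. (This is the filtration along which the eventual $\Delta^p$-filtration will be compatible with the $G_1$-radical series of $\Delta(\gamma)$.)

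The second step is to check the hypothesis of Theorem~\ref{thm4.4}, namely $\Ext^1_{\gr\wA}(\gr\wM,\gr^\diamond\wQ^\sharp(\mu))=0$ for all $\mu\in\Gamma$. Here I would invoke the results of \cite{PS10} recalled in \S3: $\gr\wA$ is an integral split quasi-hereditary algebra with weight poset $\Lambda$ whose standard modules are exactly the $\gr\wDelta(\lambda)$, $\lambda\in\Lambda$, so $\gr\wM=\gr\wDelta(\gamma)$ is a standard module. On the other side, Corollary~\ref{cor3.3} shows that $\gr^\diamond\wQ^\sharp(\mu)$ has, as a graded $\gr\wA$-module, a $\gr\wnabla$-filtration, that is, a filtration whose sections are grading shifts of the costandard modules $\gr\wnabla(\nu)$. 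For an integral quasi-hereditary algebra, $\Ext^1$ from a standard module into a module with a costandard filtration vanishes (equivalently, $\grExt^1$ into every grading shift of a costandard module vanishes, and $\Ext^1$ is the direct sum over all grading shifts of the graded $\grExt^1$), which is exactly the kind of base-change and QHA bookkeeping codified in \cite[Lem.~1.5.2]{CPSMem} and already used in the proof of Proposition~\ref{prop2.5}. Hence the desired vanishing holds, and Theorem~\ref{thm4.4} yields that each section $\Delta(\gamma)_{\widetilde s}$, $s\in\mathbb N$, has a $\rDelta$-filtration, as an $A_\Gamma$-module and thus as a rational $G$-module. Finally, refining the $G$-module filtration $\{\wF^s\Delta(\gamma)\}_s$ of step one by the $\rDelta$-filtrations of its sections, and invoking Corollary~\ref{LCF}(c) (the LCF hypothesis gives $\rDelta(\mu)=\Delta^p(\mu)$ for all $\mu\in X(T)_+$), one concludes that $\Delta(\gamma)$ has a $\Delta^p$-filtration, compatible with its $G_1$-radical series.

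The substantive content is all imported from earlier in the paper: the quasi-heredity of the forced grading $\gr\wA$ and the identification of its standard modules (from \cite{PS10}), the tightness of Weyl lattices (Corollary~\ref{cor3.9}), and the homological criterion (Theorem~\ref{thm4.4}) itself. In assembling the present argument the one point that needs care is the claim in step one that the $\wfa$-radical filtration of $\Delta(\gamma)$ consists of rational $G$-submodules, for it is precisely this that lets one pass from ``each section $\Delta(\gamma)_{\widetilde s}$ has a $\rDelta$-filtration'' to a genuine $G$-module $\Delta^p$-filtration of $\Delta(\gamma)$; everything else is formal.
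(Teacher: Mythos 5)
Your proposal is correct and follows essentially the same route as the paper's own proof: take $M=\Delta(\gamma)$ with the tight lift $\wDelta(\gamma)$ supplied by Corollaries~\ref{cor3.9} and~\ref{tightcor}, check the $\Ext^1$-vanishing of Theorem~\ref{thm4.4} via the $\gr\wnabla$-filtration of $\gr^\diamond\wQ^\sharp(\mu)$ from Corollary~\ref{cor3.3} and the standard/costandard $\Ext^1$-vanishing for the integral QHA $\gr\wA$, and conclude using Corollary~\ref{LCF}(c). The extra detail you supply---that $(\wrad^s\wfa)\Delta(\gamma)$ is an $A$-submodule (this is Corollary~\ref{Aistight}(b)), and that refining this filtration by the $\rDelta$-filtrations of the sections yields the $\Delta^p$-filtration---is correct and makes explicit what the paper leaves implicit.
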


\begin{proof} It suffices to verify the hypothesis of Theorem \ref{thm4.4} for $M=\Delta(\gamma)$. First,
Corollary \ref{cor3.9} implies that $\wDelta(\gamma)$ is an $\wfa$-tight lifting of $\Delta(\gamma)$.
Thus, $\wDelta(\gamma)$ has a $\wA$-tight lifting by Corollary \ref{tightcor}. Finally, to check 
the $\Ext^1$-condition (\ref{Donkin}), observe that Corollary \ref{cor3.3} implies the $\gr\wA$-module
$\gr^\diamond\wQ^\sharp(\mu)$, $\mu\in\Gamma$, has a filtration by costandard modules (namely, the
various $\gr\wnabla(\tau)$), for the QHA $\gr\wA$. Thus, the $\Ext^1$-group vanishes.\end{proof}

\begin{rem}\label{rem5.2} Section 7 shows, in view of the above proof, that it is enough in Theorem \ref{MainTheorem}
to assume that the LCF holds for any weight $\gamma\in \Xreg $ satisfying $\gamma<\lambda$.
\end{rem}

By \cite[Lemma 3.1(b)]{CPS7}, any Jantzen translation functor carries $\Delta^p(\lambda)$, $\lambda\in\Xreg$, to either
the 0 module or to another module of the form $\Delta^p(\mu)$, $\mu\in X(T)_+$. Since these functors are exact, there is the following consequence.

\begin{cor} Assume that $p\geq 2h-2$ is an odd prime and that the LCF formula holds for $G$. Then for any $\gamma\in X(T)_+$, $\Delta(\gamma)$ has a $\rDelta=\Delta^p$-filtration.
\end{cor}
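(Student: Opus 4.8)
The plan is to reduce the general statement (arbitrary dominant $\gamma\in X(T)_+$) to the $p$-regular case already handled in Theorem \ref{MainTheorem}, using Jantzen translation functors. First I would recall the machinery: for a $p$-regular weight $\gamma'\in\Xreg$, Theorem \ref{MainTheorem} gives that $\Delta(\gamma')$ has a $\rDelta=\Delta^p$-filtration. The key transfer tool is \cite[Lemma 3.1(b)]{CPS7}, which states that any Jantzen translation functor $T$ sends a module $\Delta^p(\lambda)$, $\lambda\in\Xreg$, either to $0$ or to another module of the form $\Delta^p(\mu)$, $\mu\in X(T)_+$. Since translation functors are exact, applying $T$ to a $\Delta^p$-filtration of $\Delta(\gamma')$ produces a filtration of $T\Delta(\gamma')$ whose nonzero sections are again of the form $\Delta^p(\mu)$, i.e.\ a $\Delta^p$-filtration of $T\Delta(\gamma')$.

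Next I would invoke the standard fact (see \cite[II.7]{JanB}) that Weyl modules are themselves permuted (up to passing from a regular to a singular facet) by translation functors: given an arbitrary dominant weight $\gamma$, choose a regular dominant weight $\gamma'$ in the facet closure situation so that the translation functor $T_{\gamma'}^{\gamma}$ out of the regular block carries $\Delta(\gamma')$ onto $\Delta(\gamma)$. Concretely, one picks $\gamma'$ regular with $\gamma$ in the closure of its facet (which is possible since $p\geq 2h-2\geq h$ guarantees $\Xreg\neq\emptyset$ and that such a regular lift exists), and uses that translation "onto a wall" takes the Weyl module of highest weight $\gamma'$ to the Weyl module of highest weight $\gamma$. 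Combining this with the previous paragraph: $\Delta(\gamma)=T\Delta(\gamma')$ inherits a $\Delta^p$-filtration from that of $\Delta(\gamma')$.

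Finally I would note that, by Lemma \ref{LCF}(c) (using $p\geq 2h-2>h$ and the LCF for $G$), $\rDelta(\mu)=\Delta^p(\mu)$ for all $\mu\in X(T)_+$, so the $\Delta^p$-filtration obtained is literally an $\rDelta$-filtration as well; this is exactly the phrasing in the corollary's statement. The argument is essentially the sketch already given in the two sentences preceding the corollary in the excerpt, so the write-up is short.

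The main obstacle — really the only nontrivial point — is making precise that translation out of the regular block actually \emph{surjects} a Weyl module onto the desired (possibly singular) Weyl module, and controlling which regular $\gamma'$ to choose; this is classical (\cite[II.7.11, II.7.15]{JanB}) but must be cited carefully, especially to ensure the regular lift $\gamma'$ is dominant and that the translation functor in question does not annihilate $\Delta(\gamma')$. A secondary bookkeeping point is that \cite[Lemma 3.1(b)]{CPS7} is stated for $\Delta^p(\lambda)$ with $\lambda\in\Xreg$, so one should apply it section-by-section to the filtration of $\Delta(\gamma')$ (whose sections $\Delta^p(\mu)$ may a priori have non-regular $\mu$); here one uses that the relevant sections arising in a $\Delta^p$-filtration of a regular Weyl module do have the right form, or alternatively one restricts attention to the regular block throughout and only applies the lemma to regular-highest-weight sections — either way it is routine given the exactness of the functors and the hypotheses already in force.
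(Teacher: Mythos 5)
Your proposal is correct and is essentially the paper's own proof, which consists of the two sentences immediately preceding the corollary: translation functors carry $\Delta^p(\lambda)$ ($\lambda$ regular) to $0$ or to some $\Delta^p(\mu)$ by \cite[Lemma 3.1(b)]{CPS7}, and since they are exact and carry $\Delta(\gamma')$ ($\gamma'$ a regular lift) onto $\Delta(\gamma)$, the $\Delta^p$-filtration of Theorem \ref{MainTheorem} transports to $\Delta(\gamma)$. The bookkeeping points you flag (that the sections $\Delta^p(\mu)$ of a regular Weyl module have regular $\mu$, and that a regular dominant lift $\gamma'$ with $T\Delta(\gamma')\cong\Delta(\gamma)$ exists) resolve exactly as you indicate.
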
 

 Using the following lemma, Theorem \ref{MainTheorem} can be recast, using $G_1$-radical series. We
 continue to assume in the rest of this section that $p\geq 2h-2$ is an odd prime and that the LCF holds for $G$. For
 $\mu_0\in X_1(T)$, let $Q_1(\mu_0)=\widehat Q_1(\mu_0)|_{G_1}$. Also, $u=u({\mathfrak g})$ is the
 restricted enveloping algebra of the Lie algebra $\mathfrak g$ of $G$. 

\begin{lem}\label{radicalseriesofintegral quantum}  Let $\mu_0\in X_1(T)\cap \Xreg $. For $n\geq 0$,
\begin{equation}\label{rigid}\overline{\wQ_\zeta(\mu_0)\cap\rad^n_{u_\zeta}Q_\zeta(\mu_0)} \cong
\rad^n_{u({\mathfrak g})}Q_1(\mu_0).\end{equation}
\end{lem}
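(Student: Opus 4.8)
The plan is to show that $\wQ_\zeta(\mu_0)$ is a $\wfa$-tight lattice and then invoke the tightness machinery of \S3, particularly Corollary \ref{tightcor}, to commute reduction mod $\pi$ with the passage to the radical filtration. First I would recall that $\wQ_\zeta(\mu_0)=\wQ^\sharp(\mu_0)$ (for $\mu_0$ regular restricted) is a projective indecomposable module for $\wA_\Gamma$ where $\Gamma$ is the ideal in $\Xreg$ generated by $\mu_0':=2(p-1)\rho+w_0\mu_0$ (the ``$p$-bounded'' situation from \S2, built into Assumptions \ref{assumptions}). By Corollary \ref{cor3.9} (or directly Lemma \ref{tight}), $\wQ_\zeta(\mu_0)$ is $\wfa$-tight, hence, by Corollary \ref{tightcor}(a), also $\wA$-tight. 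The key consequence, via the $\wfa$-version of equation (\ref{rad}), is the physical equality
\[
(\wrad^n\wfa)\wQ_\zeta(\mu_0)=\wrad^n\wQ_\zeta(\mu_0)=\rad^n_{u_{\zeta,K}}Q_\zeta(\mu_0)\cap\wQ_\zeta(\mu_0),
\]
using also that $\wfa_K=u'_{\zeta,K}$ projects onto the regular blocks of $u_{\zeta,K}$ and that radicals computed over $\wA_K$ or $\wfa_K$ coincide by (\ref{radicals}). Since $Q_\zeta(\mu_0)$ has all its composition factors in $\Gamma$ and $u_\zeta$ acts on it through its regular-block quotient, $\rad^n_{u_{\zeta,K}}Q_\zeta(\mu_0)$ is what appears on the left of (\ref{rigid}) after intersecting with $\wQ_\zeta(\mu_0)$; so the left side of (\ref{rigid}) is exactly $\overline{\wrad^n\wQ_\zeta(\mu_0)}=(\overline{\gr\wQ_\zeta(\mu_0)})_{\geq n}$ in the obvious sense.

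Next I would apply Corollary \ref{tightcor}(c): because $\wQ_\zeta(\mu_0)$ is tight, the natural map $\wgr\overline{\wQ_\zeta(\mu_0)}\to\overline{\gr\wQ_\zeta(\mu_0)}$ is an isomorphism, and more precisely each $(\wrad^n\wfa)\wQ_\zeta(\mu_0)$ is a pure $\sO$-submodule, so $\overline{(\wrad^n\wfa)\wQ_\zeta(\mu_0)}=(\wrad^n\wfa)_k\,\overline{\wQ_\zeta(\mu_0)}$. Now $\overline{\wQ_\zeta(\mu_0)}=Q^\sharp(\mu_0)$, whose $G_1$-restriction is $Q_1(\mu_0)$ (using $p\geq 2h-2$), and by Lemma \ref{preliminaryLCF}(c) — which is exactly where the LCF hypothesis enters — the reduced ideal $(\wrad\wfa)_k$ equals $\rad\fa$, the radical of the regular-block quotient of $u=u(\mathfrak g)$. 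Iterating, $(\wrad^n\wfa)_k$ acting on $Q_1(\mu_0)$ produces $\rad^n_{u(\mathfrak g)}Q_1(\mu_0)$, since $u_\zeta$ and hence $\wfa$ acts on this module through the same regular-block quotient $\fa$ of $u$. Stringing these identifications together yields
\[
\overline{\wQ_\zeta(\mu_0)\cap\rad^n_{u_\zeta}Q_\zeta(\mu_0)}=\overline{(\wrad^n\wfa)\wQ_\zeta(\mu_0)}=(\wrad^n\wfa)_k\,Q^\sharp(\mu_0)=\rad^n_{u(\mathfrak g)}Q_1(\mu_0),
\]
which is (\ref{rigid}).

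The main obstacle, and the only genuinely substantive point, is the passage from $(\wrad\wfa)_k$ to $\rad\fa$ — i.e.\ the equality of the reduced filtration ideal with the honest radical of the modular algebra $\fa$. This is precisely a dimension count: the head of $\wfa_K=u'_{\zeta,K}$ and the head of $\fa$ (the regular-block quotient of $u$) must have the same dimension, which forces the nilpotent ideal $(\wrad\wfa)_k$ (a priori only contained in $\rad\fa$) to have the right codimension and hence to equal $\rad\fa$. That coincidence of head dimensions is exactly what the LCF for $G$ delivers (Lemma \ref{preliminaryLCF}(c)), since both heads are governed by the same Kazhdan--Lusztig data. A secondary technical point worth spelling out is that one must check the tightness conclusions survive the passage between $\wA_\Gamma$ for the $p$-bounded $\Gamma$ and the fixed $\wA=\wA_\Lambda$ of Assumptions \ref{assumptions}; this is handled by Corollary \ref{cor3.9}, which states the relevant projectives and the algebras $\wA_\Gamma$ are all $\wfa$-tight, together with the compatibility (\ref{closelyrelatedresult}) of $\gr$ with truncation to ideals.
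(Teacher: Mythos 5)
Your proof is correct up through the identification of the left-hand side of \eqref{rigid} with $\overline{\wrad^n\wfa}\cdot Q_1(\mu_0)$: the tightness of $\wQ^\sharp(\mu_0)$ (via Corollary \ref{cor3.9}, since $P^\sharp(\mu_0)=Q^\sharp(\mu_0)$ for restricted $\mu_0$ is a projective in a $p$-bounded truncation), the equalities $(\wrad^n\wfa)\wQ^\sharp(\mu_0)=\wrad^n\wQ^\sharp(\mu_0)=\wQ^\sharp(\mu_0)\cap\rad^n_{u_{\zeta,K}}Q_\zeta(\mu_0)$, and the purity of these submodules (so that reduction mod $\pi$ of the filtration is the filtration of $Q_1(\mu_0)$ by $(\wrad^n\wfa)_k Q_1(\mu_0)$) are all sound and match the paper's setup.

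The gap is the sentence ``Iterating, $(\wrad^n\wfa)_k$ acting on $Q_1(\mu_0)$ produces $\rad^n_{u(\mathfrak g)}Q_1(\mu_0)$.'' From $(\wrad\wfa)_k=\rad\fa$ one gets only the containment $(\rad\fa)^n\subseteq(\wrad^n\wfa)_k$, since $(\wrad\wfa)^n\subseteq\wrad^n\wfa$ but the reverse is \emph{not} automatic: $\wrad^n\wfa$ is defined as $\wfa\cap\rad^n\wfa_K$ and need not be a product of copies of $\wrad\wfa$. The equality $\overline{\wrad^n\wfa}=(\rad\fa)^n$ is exactly the content of Corollary \ref{rigid2}, which the paper deduces \emph{from} Lemma \ref{radicalseriesofintegral quantum}, not the other way around --- so invoking it here would be circular. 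In other words, what you have actually established at that point is only that the left-hand side of \eqref{rigid} defines a filtration of $Q_1(\mu_0)$ refining (being contained in, term by term) the radical series; the hard direction is missing.

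The paper closes this gap with a rigidity argument, which is the real content. Because $(\wrad\wfa)(\wrad^n\wfa)\subseteq\wrad^{n+1}\wfa$, each section of the filtration $\{(\wrad^n\wfa)_k Q_1(\mu_0)\}_n$ is annihilated by $(\wrad\wfa)_k=\rad\fa$ (this is where the LCF enters, as you correctly note), hence is semisimple. Tightness bounds the length of this semisimple series by the Loewy length of $Q_\zeta(\mu_0)$. But $Q_\zeta(\mu_0)$ and $Q_1(\mu_0)$ are both rigid of Loewy length $|\Phi|+1$ (\cite[II.D.14, D.8]{JanB} and its quantum analogue), and any semisimple series of $Q_1(\mu_0)$ has length at least its Loewy length; so the lengths agree, and a semisimple series of a rigid module whose length equals the Loewy length must coincide with the radical (and socle) series. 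That forces $(\wrad^n\wfa)_k Q_1(\mu_0)=\rad^n_{u(\mathfrak g)}Q_1(\mu_0)$. Your proof, as written, skips exactly this step.
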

\begin{proof} By choosing a poset $\Lambda$ of regular dominant weights  large enough, we can that 
$\wQ_\zeta(\mu_0)$ is an $\wA=\wA_\Lambda$-module, which we denote by $\wQ^\sharp(\mu_0)$
in keeping with previous notation. First, observe that $\overline{\wrad\fa}=\rad\fa$, as was shown in the proof of Lemma \ref{preliminaryLCF}(c). Of course, $(\wrad\wfa)(\wrad^n\wfa)\subseteq\wrad^{n+1}\wfa$ for any $n\in\mathbb N$. Since $\wQ^\sharp(\mu_0)$ is tight, the left hand side of (\ref{rigid}) defines a filtration of
$Q^\sharp(\mu_0)=Q_1(\mu_0)$ whose sections are $\wrad^n\wfa(Q^\sharp(\mu_0)$. It thus defines a semisimple
series of $Q^\sharp(\mu_0)$ whose length is at most the length of a radical series of $\wQ_K(\mu_0)$.
The right hand side of (\ref{rigid}) defines the radical series of $Q_1(\mu_0)$.  Also, the left hand
side terms give a semisimple series, having length equal to   that of the radical series of $Q_1(\mu_0)$. 
  On the other hand,  $Q_\zeta(\mu_0)$ and $Q_1(\mu_0)$ are both rigid modules having Loewy length equal to $|\Phi| +1$ by \cite[II.D.14\&D.8]{JanB} (and its evident quantum analog).
An elementary argument establishes the filtrations of $Q_1(\mu_0)$ must be the same, so that
(\ref{rigid}) holds. \end{proof}

\begin{rem} An alternate proof can be given using the Koszulity
of the regular weight projections of the small quantum group and restricted enveloping algebra, proved in \cite{AJS} under the assumptions that $p>h$ and that
the LCF holds for $G$. These assumptions
hold here. It follows that the multiplicity of an irreducible module in the
$n$th radical layer of  $\wQ_\zeta(\mu_0)$ or of $Q_1(\mu_0)$ can be computed using
products of the same (analogs of) inverse Kazhdan-Lusztig polynomials. The
 result now follows from the remarks at the beginning of the proof above, which show the right hand
side of (\ref{rigid}) is contained in the left hand side. So a codimension count shows equality must hold.
 \end{rem}

For an $\fa$-module $M$, we put (for emphasis) $\gr_\fa M=\bigoplus_{n\geq 0}\rad^nM/\rad^{n+1}M$, where
here $\rad^nM=(\rad \fa)^nM$. 
As a consequence, we immediately obtain the following important result.
\begin{cor}\label{rigid2} For $n\in\mathbb N$, 
$$\overline{\wrad^n\wfa}=(\rad\fa)^n.$$
Thus, for any $\fa$-module $M$, we have
$\wgr M= \gr_{\fa}(M).$
\end{cor}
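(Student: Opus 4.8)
The goal is Corollary \ref{rigid2}, asserting $\overline{\wrad^n\wfa}=(\rad\fa)^n$ for all $n\in\mathbb N$, and consequently $\wgr M=\gr_\fa M$ for every $\fa$-module $M$. The plan is to extract the first assertion directly from Lemma \ref{radicalseriesofintegral quantum} and its proof, and then to deduce the module-level statement by a formal argument.

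\medskip

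\noindent\textbf{Step 1: Reduce the ring statement to a codimension count.} Recall the evident containment $(\rad\fa)^n=\overline{(\rad\wfa_K)^n\cap\wfa}\supseteq\overline{\wrad^n\wfa}$ is \emph{not} automatic in the right direction; what \emph{is} automatic is the reverse. Indeed, $\wrad^n\wfa=\wfa\cap\rad^n\wfa_K$ reduces mod $\pi$ into $\rad^n\fa_k$. But $\rad\fa_k$ need not equal $\rad\fa$ a priori --- rather, the proof of Lemma \ref{preliminaryLCF}(c) shows, using the LCF, that $(\wrad\wfa)_k=\rad\fa$, i.e. $\overline{\wrad\wfa}=\rad\fa$. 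Iterating the multiplicativity $(\wrad\wfa)(\wrad^n\wfa)\subseteq\wrad^{n+1}\wfa$ and reducing mod $\pi$, one gets $(\rad\fa)^n=(\overline{\wrad\wfa})^n\subseteq\overline{\wrad^n\wfa}$. So here the containment to prove is the \emph{opposite} one: $\overline{\wrad^n\wfa}\subseteq(\rad\fa)^n$, equivalently, the semisimple series $\{\overline{\wrad^n\wfa\cdot Q^\sharp(\mu_0)}\}_n$ of $Q_1(\mu_0)$ cannot be strictly finer than its radical series.

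\medskip

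\noindent\textbf{Step 2: Invoke Lemma \ref{radicalseriesofintegral quantum}.} That lemma, with $M=Q^\sharp(\mu_0)$ for $\mu_0\in X_1(T)\cap\Xreg$, already identifies $\overline{\wQ_\zeta(\mu_0)\cap\rad^n_{u_\zeta}Q_\zeta(\mu_0)}$ with $\rad^n_{u(\fg)}Q_1(\mu_0)$. Because $Q^\sharp(\mu_0)$ is $\wfa$-tight, $\wQ_\zeta(\mu_0)\cap\rad^n_{u_\zeta}Q_\zeta(\mu_0)=(\wrad^n\wfa)\wQ^\sharp(\mu_0)$, so reducing mod $\pi$ gives $(\wrad^n\wfa)_k\cdot Q_1(\mu_0)=(\rad\fa)^n Q_1(\mu_0)$ as submodules of $Q_1(\mu_0)$. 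Since the $Q_1(\mu_0)$, $\mu_0$ ranging over $X_1(T)\cap\Xreg$, are a complete set of projective indecomposables for the (regular block of the) algebra $\fa$, and since the $\fa$-action factors faithfully through its action on $\bigoplus_{\mu_0}Q_1(\mu_0)$, equality of the two-sided ideals $(\wrad^n\wfa)_k$ and $(\rad\fa)^n$ follows: a two-sided ideal of $\fa$ is determined by its action on a faithful projective generator. Combined with Step 1 this gives $\overline{\wrad^n\wfa}=(\rad\fa)^n$.

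\medskip

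\noindent\textbf{Step 3: Pass from the ring to arbitrary modules.} Given an $\fa$-module $M$, by definition $\wgr M=\bigoplus_n (\wrad^n\wfa)M/(\wrad^{n+1}\wfa)M$, where $\wrad^n\wfa$ acts on $M$ through its reduction $\overline{\wrad^n\wfa}\subseteq\fa$ (here $M$ is an $\fa$-module, i.e.\ $\pi M=0$, so only the mod-$\pi$ image acts). By Step 2, $\overline{\wrad^n\wfa}\cdot M=(\rad\fa)^n M=\rad^n M$ with the notation of the corollary, so the $n$-th graded piece of $\wgr M$ is $\rad^n M/\rad^{n+1}M$, i.e. $\wgr M=\gr_\fa M$ as graded $\fa$-modules.

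\medskip

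\noindent The main obstacle is Step 2's use of Lemma \ref{radicalseriesofintegral quantum}, which is itself the substantive input --- it rests on the rigidity (and common Loewy length $|\Phi|+1$) of $Q_\zeta(\mu_0)$ and $Q_1(\mu_0)$, or alternatively on Koszulity via \cite{AJS}; both require $p>h$ and the LCF for $G$, hypotheses in force here. Given that lemma, the remaining work is the purely formal passage from a statement about projective generators to a statement about the defining ideals and then about all modules, which is routine.
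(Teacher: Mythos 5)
Your proof is correct and follows the same route the paper intends: the substantive input is Lemma \ref{radicalseriesofintegral quantum} applied to the $\wfa$-projective (hence $\wfa$-tight) lattices $\wQ^\sharp(\mu_0)$, and the passage from equality of the resulting submodules of each $Q_1(\mu_0)$ to equality of the two-sided ideals themselves is exactly the kind of ``immediate'' consequence the paper has in mind, which you make explicit via the faithful-projective-generator observation. Two very minor remarks: your Step~1 is actually redundant, since the generator argument in Step~2 yields the equality of ideals directly once one knows the equality of their actions on $\bigoplus_{\mu_0}Q_1(\mu_0)$; and the opening sentence of Step~1 is garbled (you write an equality $(\rad\fa)^n=\overline{(\rad\wfa_K)^n\cap\wfa}$ that is the thing to be proved), though the argument that follows it is fine.
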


\begin{thm}\label{firsttheorem}
For any $\lambda\in \Xreg $, each section of the $G_1$-radical series
of $\Delta(\lambda)$ has a $\rDelta$-filtration. In particular, each section of this radical series
has a $\Delta^p$-filtration.\end{thm}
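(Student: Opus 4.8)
The plan is to identify the $G_1$-radical filtration of $\Delta(\lambda)$ with the filtration $\wF^\bullet$ that already appears in Theorem \ref{MainTheorem}, and then simply transport that theorem across the identification. First I would fix $\lambda\in\Xreg$ and choose the posets $\Gamma\trianglelefteq\Lambda$ exactly as in the proof of Theorem \ref{MainTheorem}, so that $\Delta(\lambda)\in A_\Gamma$--mod. This is legitimate because of the linkage principle: since $\lambda$ is $p$-regular, every composition factor $L(\tau)$ of $\Delta(\lambda)$ has $\tau\in W_p\cdot\lambda\subseteq\Xreg$, and the set of such $\tau$ is finite, so it sits inside a finite ideal of $\Xreg$.

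Next I would record why the $u(\mathfrak g)$-action on $\Delta(\lambda)$ factors through $\fa$. Recall $u=u(\mathfrak g)=\Dist(G_1)$ and that $\fa=\wfa_k$ is the sum of the regular blocks of $u$. Again by $p$-regularity of $\lambda$ (hence of all composition factors, by linkage), $\Delta(\lambda)$ lies in the regular-block component, so $u$ acts on it through $\fa$; consequently $\rad^n_{G_1}\Delta(\lambda)=\rad^n_u\Delta(\lambda)=(\rad\fa)^n\Delta(\lambda)$ for every $n\geq 0$. Now invoke Corollary \ref{rigid2}, which gives $\overline{\wrad^n\wfa}=(\rad\fa)^n$ and hence $\wgr\Delta(\lambda)=\gr_\fa\Delta(\lambda)$ as graded $\wgr A=\gr_\fa A$-modules. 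In the notation of (\ref{titlder}) this reads
\[
\Delta(\lambda)_{\widetilde s}=(\wrad^s\wfa)\Delta(\lambda)/(\wrad^{s+1}\wfa)\Delta(\lambda)=(\rad\fa)^s\Delta(\lambda)/(\rad\fa)^{s+1}\Delta(\lambda)=\rad^s_{G_1}\Delta(\lambda)/\rad^{s+1}_{G_1}\Delta(\lambda).
\]
Since $G$ normalizes $G_1$, each $\rad^n_{G_1}\Delta(\lambda)$ is a $G$-submodule, so these sections carry their natural rational $G$-module structure, which matches the $G$-structure on $\Delta(\lambda)_{\widetilde s}$ used in Theorem \ref{MainTheorem}.

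Finally, Theorem \ref{MainTheorem} asserts that each $\Delta(\lambda)_{\widetilde s}$, viewed as a rational $G$-module, has a $\rDelta$-filtration; combining this with the displayed identification proves that each section of the $G_1$-radical series of $\Delta(\lambda)$ has a $\rDelta$-filtration. The concluding sentence then follows from Corollary \ref{LCF}(c): under the standing hypothesis that the LCF holds for $G$ one has $\rDelta(\mu)=\Delta^p(\mu)$ for all $\mu$, so a $\rDelta$-filtration is a $\Delta^p$-filtration. The only point I expect to require genuine care is the bookkeeping that makes $\rad^n_{G_1}$, $(\rad\fa)^n$ and $\wF^n$ literally coincide on $\Delta(\lambda)$ as $G$-submodules — that is, checking that the restricted enveloping algebra really does act through its regular-block quotient $\fa$ — which is precisely where $p$-regularity of $\lambda$ enters via linkage; everything else is a direct appeal to Corollary \ref{rigid2} and Theorem \ref{MainTheorem}.
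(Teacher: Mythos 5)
Your argument is correct and follows essentially the same route as the paper: identify $\wgr\Delta(\lambda)$ with $\gr_\fa\Delta(\lambda)$ via Corollary \ref{rigid2}, then quote Theorem \ref{MainTheorem}. The only difference is that you explicitly justify, via the linkage principle, why the $G_1$-radical filtration of $\Delta(\lambda)$ coincides with the $\fa$-radical filtration (i.e.\ why $u(\mathfrak g)$ acts through its regular-block quotient $\fa$ on a module with $p$-regular highest weight), a point the paper leaves implicit in passing from $\gr_\fa$ to ``the $G_1$-radical series''; that is a genuine clarification, not a departure in method.
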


\begin{proof}By Corollary \ref{rigid2},
 $\wgr \Delta(\lambda)=\gr_\fa\Delta(\lambda)$ for any regular dominant weight $\lambda$. Thus, for any positive integer $s$, $\Delta(\lambda)_s
=\Delta(\lambda)_{\widetilde s}$. The theorem thus follows from Theorem \ref{MainTheorem}. \end{proof}

\section{Appendix I}
\setcounter{equation}{0}

 Let $(K,\sO,k)$ be a $p$-local system, and let $\wA$ be an integral quasi-hereditary
algebra over $\sO$. Assume that $\Lambda$ is the poset of the QHA $\wA$, and for $\lambda\in\Lambda$,
the standard module $\wDelta(\lambda)$ and costandard module $\widetilde\nabla(\lambda)$ are given.
Both $\wDelta(\lambda)$ and $\widetilde\nabla(\lambda)$ are $\wA$-lattices.

\begin{prop}\label{filtrationprop}Let $\wM$ be an $\wA$-lattice with the property that 
\begin{equation}\label{tooty}\Ext^1_\wA(\wM,\widetilde\nabla(\lambda))=0\end{equation}
for all $\lambda\in\Lambda$. Then $\wM$ has a $\wDelta$-filtration.\end{prop}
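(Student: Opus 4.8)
The plan is to reduce the statement to a standard characterization of $\wDelta$-filtrations for integral quasi-hereditary algebras, the analogue of Donkin's criterion in the split setting. First I would recall that, since $\wA$ is a split integral QHA with poset $\Lambda$, base change to $K$ gives a split QHA $\wA_K$ over the field $K$, and the modules $\wDelta(\lambda)_K = \Delta_K(\lambda)$, $\widetilde\nabla(\lambda)_K = \nabla_K(\lambda)$ are the standard and costandard modules there. The classical criterion (see \cite[II.4.16]{JanB} or \cite{CPS1a}) says that a finite-dimensional $\wA_K$-module has a $\Delta_K$-filtration if and only if its $\Ext^1$ against every $\nabla_K(\lambda)$ vanishes; applying this to $\wM_K$, which satisfies $\Ext^1_{\wA_K}(\wM_K,\nabla_K(\lambda)) = K\otimes_\sO\Ext^1_\wA(\wM,\widetilde\nabla(\lambda)) = 0$ by (\ref{tooty}) and flat base change, yields that $\wM_K$ has a $\Delta_K$-filtration.

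Next I would lift this filtration to $\sO$. The key input is the vanishing $\Ext^1_\wA(\wM,\widetilde\nabla(\lambda)) = 0$ itself (not merely after tensoring with $K$), which controls the extension problem integrally. I would argue by downward induction on the poset $\Lambda$: pick $\mu$ maximal among the $\lambda\in\Lambda$ with $[\wM_K : L_K(\lambda)]\neq 0$. Maximality forces $\Hom_{\wA_K}(\wM_K,\nabla_K(\mu))$ to pick out, via a minimal admissible lattice argument, a surjection or at least a nonzero map onto a copy of $\wDelta(\mu)$; more precisely, the multiplicity of $\wDelta(\mu)$ as a top section of a $\wDelta_K$-filtration of $\wM_K$ equals $\dim_K\Hom_{\wA_K}(\wM_K,\nabla_K(\mu))$, and because $\mu$ is maximal this is also $\dim_K\Hom_{\wA_K}(\wM_K, L_K(\mu))$. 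I would then construct an $\wA$-submodule $\wN\subseteq\wM$, pure as an $\sO$-submodule (so that $\wM/\wN$ is again an $\wA$-lattice), with $\wN$ a direct sum of copies of $\wDelta(\mu)$ and $\wM/\wN$ having $\mu$ removed from the support of its generic fiber. This uses that $\wDelta(\mu)$ is $\wA/\wA e_{>\mu}\wA$-projective for the appropriate idempotent, so the relevant $\Hom$ out of $\wM$ is computed by a projective cover in the quotient category.

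The heart of the induction is showing that the quotient $\wM/\wN$ still satisfies the hypothesis (\ref{tooty}) for all $\lambda$ in the smaller support, so that the inductive hypothesis applies and $\wM/\wN$ has a $\wDelta$-filtration; splicing gives the $\wDelta$-filtration of $\wM$. From the short exact sequence $0\to\wN\to\wM\to\wM/\wN\to 0$ one gets the long exact sequence for $\Ext^\bullet_\wA(-,\widetilde\nabla(\lambda))$; since $\wN$ is a sum of $\wDelta(\mu)$'s, $\Ext^1_\wA(\wN,\widetilde\nabla(\lambda)) = 0$ by the defining property of a split QHA (standards have no higher $\Ext$ into costandards, integrally), and $\Hom_\wA(\wN,\widetilde\nabla(\lambda))$ is free over $\sO$ of the expected rank with $\Hom_\wA(\wM,\widetilde\nabla(\lambda))\to\Hom_\wA(\wN,\widetilde\nabla(\lambda))$ surjective by construction of $\wN$ (this is where one needs that $\wN$ absorbs exactly the $\mu$-part); hence $\Ext^1_\wA(\wM/\wN,\widetilde\nabla(\lambda)) = 0$.

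I expect the main obstacle to be the integral lifting step: ensuring that the submodule $\wN$ can be chosen to be $\sO$-pure in $\wM$ and to be genuinely $\wDelta(\mu)^{\oplus m}$ rather than merely having that as its generic fiber — i.e., that no torsion or ``defect'' sneaks in when passing from $\wA_K$ to $\wA$. This is exactly the point where the vanishing of $\Ext^1_\wA(\wM,\widetilde\nabla(\mu))$ over $\sO$ (rather than just over $K$) is essential, via the standard argument that $\Ext^1_\wA(\wM,-)$ being torsion-free forces the reduction-mod-$\pi$ map on $\Hom$'s to be surjective, which in turn lets one descend the splitting. Once that is in place, the rest is a routine dévissage along the poset. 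I would present the argument by first stating the field-level criterion as a known fact, then doing the induction, flagging the purity/torsion-freeness point as the one requiring care, and citing \cite[Lem. 1.5.2]{CPSMem} for the base-change bookkeeping on $\Hom$ and $\Ext$.
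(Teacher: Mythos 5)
Your overall strategy is the right one, and it is the same one the paper uses: reduce to $\lambda$ maximal in the poset, split $\wM$ into a piece supported at $\lambda$ and a quotient supported away from $\lambda$, and induct on $|\Lambda|$. You also correctly identify the key data — the two consequences of (\ref{tooty}), namely $\Ext^1_{\wA_K}(\wM_K,\nabla_K(\lambda))=0$ and surjectivity of $\Hom_\wA(\wM,\wnabla(\lambda))\to\Hom_\wA(\wM,\nabla(\lambda))$ — and correctly point to the ``integral lifting step'' as the crux.

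However, there is a genuine gap, and it lies precisely in the order of your d\'evissage. You propose to \emph{first} construct a pure submodule $\wN\subseteq\wM$ that is already isomorphic to $\wDelta(\mu)^{\oplus m}$, and \emph{then} apply induction to $\wM/\wN$. The construction you sketch for $\wN$ — via projectivity of $\wDelta(\mu)$ in a quotient category and the torsion-freeness of $\Ext^1_\wA(\wM,-)$, ``descending the splitting'' — does not actually produce it. Concretely: the natural candidate is $\wD:=\ker(\wM\to N_K)$ where $N_K$ is the largest quotient of $\wM_K$ killing $L_K(\lambda)$; this $\wD$ is automatically $\sO$-pure, and $\wD_K\cong\Delta_K(\lambda)^{\oplus m}$ because $\wM_K$ has a $\Delta_K$-filtration. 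But to promote this to $\wD\cong\wDelta(\lambda)^{\oplus m}$ integrally, one first needs to know that the head of $\wD_k$ is a sum of copies of $L(\lambda)$, i.e.\ that $\Hom_\wA(\wD,\nabla(\nu))=0$ for $\nu\neq\lambda$. The only argument available for this is that every map $\wD\to\nabla(\nu)$ extends to $\wM$ (via the long exact sequence), which requires $\Ext^1_\wA(\wN,\nabla(\nu))=0$ for the \emph{quotient} $\wN=\wM/\wD$. One does get $\Ext^1_\wA(\wN,\wnabla(\nu))=0$ cheaply from (\ref{tooty}) and $\Hom_{\wA_K}(\wD_K,\nabla_K(\nu))=0$, but that is not enough: the passage from $\wnabla(\nu)$ to $\nabla(\nu)$ introduces a contribution from $\Ext^2_\wA(\wN,\wnabla(\nu))$, and there is no a priori control of it. What actually kills $\Ext^1_\wA(\wN,\nabla(\nu))$ is knowing that $\wN$ already has a $\wDelta$-filtration.

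This is why the paper runs the argument in the opposite order: it constructs the pure submodule $\wD$ and the lattice quotient $\wN$, then verifies conditions (1) and (2) for $\wN$ over $\wA_{\Lambda'}$ (which, as you observe, needs only $\Hom_{\wA_K}(\wD_K,\nabla_K(\nu))=0$ for $\nu\neq\lambda$, not the structure of $\wD$ itself), invokes the inductive hypothesis to get a $\wDelta$-filtration of $\wN$, and \emph{only then} identifies $\wD$: the filtration of $\wN$ gives $\Ext^1_\wA(\wN,\nabla(\nu))=0$, hence $\Hom_\wA(\wD,\nabla(\nu))=0$ for $\nu\neq\lambda$, hence the head of $\wD_k$ is $L(\lambda)^{\oplus n}$; combined with a projectivity argument showing $\wD_K\cong\Delta_K(\lambda)^{\oplus m}$ and a composition-factor count giving $n=m$, one concludes $\wD\cong\wDelta(\lambda)^{\oplus m}$. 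Your proposed ordering is circular at exactly the spot you flag as ``requiring care''; the fix is not a sharper use of $\Ext^1$-torsion-freeness but rather deferring the identification of the bottom layer until after the induction has been applied to the top.
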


\begin{proof} For any given $\lambda$, the vanishing condition (\ref{tooty}) is equivalent to both the following
two conditions holding:
\begin{equation}\label{equivalence}\begin{cases} (1)\quad\Ext^1_{\wA_K}(\wM_K,\nabla_K(\lambda))=0; \\
(2)\quad \Hom_\wA(\wM,\widetilde\nabla(\lambda))\to \Hom_{\wA}(\wM,\nabla(\lambda))\,\,{\text{\rm is
surjective.}}\end{cases}\end{equation}
(The map in (2) is induced by the natural map $\wnabla(\lambda)\to\nabla(\lambda)$.) This fact is an easy application of the long exact sequence of cohomology associated to the short exact sequence
$0\to\wnabla(\lambda)\overset\pi\longrightarrow\wnabla(\lambda)\to\nabla(\lambda)\to 0$. We leave the details  to the reader. Furthermore, (\ref{grExt}) implies that, when $\wM$ is an $\wA_\Gamma$-lattice for some poset $\Gamma$ in $\Lambda$, and $\lambda\in\Gamma$, then each condition (1) and (2) above is equivalent to
its counterpart over $\wA_\Gamma$.  

For any nonempty poset ideal $\Gamma$ in $\Lambda$, if 
$\wX$ is an $\wA$-lattice such that all composition factors $L_K(\nu)$ of $\wX_K$ satisfy
$\nu\in\Gamma$, then $\wX$ is an $\wA_\Gamma$-module. In particular, any irreducible
section (``composition factor") of $\wX$ has the form $L(\gamma)$ with $\gamma\in\Gamma$. 
 
  We will prove the result by induction on $|\Lambda|$, the case $\Lambda=\emptyset$ and $\wA=0$
  being trivial.
 Let $\lambda\in\Lambda$ be maximal with
$L_K(\lambda)$ a composition factor of $\wM_K$.  Put $\Gamma=\{\gamma\in\Lambda\,|\,
\gamma\not>\lambda\}$. Thus, $\Gamma$ 
is a poset in $\Lambda$ containing $\lambda$ as a maximal element, and $\wM$ is an $\wA_\Gamma$-lattice. We can therefore assume, without loss,  that $\Gamma=\Lambda$.

  Let $N_K$ be the largest quotient of $\wM_K$ for
which $[N_K:L_K(\lambda)]=0$. Set $\wD$ be the kernel of the map $\wM\subseteq\wM_K\twoheadrightarrow
N_K$. Then $\wD$ is a pure submodule of $\wM$. Let $\wN=\wM/\wD$, an $\wA$-lattice for $N_K$. Then
$L(\lambda)$ is not a composition factor of $\wN$. For any $\nu\not=\lambda$, the map
$\Hom_\wA(\wM,\widetilde\nabla(\nu))\to\Hom_\wA(\wM,\nabla(\nu))$ is surjective, since $\Ext^1_\wA(\wM,\widetilde\nabla(\nu))=0$.  Notice $\Hom_{\wA_K}(\wD_K,\nabla_K(\nu))=0$ if $\nu\not=\lambda$. (The head of $\wD_K$ has 
only composition factors $L_K(\lambda)$, by maximality of $N_K$. If $\Hom_{\wA_K}(\wD_K,\nabla_K(\nu))\not=0$, a composition factor $L_K(\mu)$ of $\nabla_K(\nu)$ would occur in the head of $\wD_K$, and also $L_K(\nu)$
would be a composition factor of $\wD_K\subseteq\wM_K$. This gives $\lambda=\mu\leq \nu\not=\lambda$, which is
a contraction to the maximality of $\lambda$.) Thus, $\Hom_\wA(\wM,\widetilde\nabla(\nu))\cong
\Hom_{\wA}(\wN,\widetilde\nabla(\nu))$. Also, by the surjectivity above, every map $f\in\Hom_\wA(\wM,\nabla(\nu))$ comes from
a map $\widetilde f\in\Hom_\wA(\wM,\widetilde\nabla(\nu))\subseteq\Hom_{\wA_K}(\wM_K,\nabla_K(\nu))$
by reduction mod $\pi$. Since $\widetilde f(\wD)=0$, $f(\wD)=0$ as well. Hence, $\Hom_\wA(\wM,\nabla(\nu))
\cong
\Hom_\wA(\wN,\nabla(\nu))$. Now we have that the natural map $\Hom_{\wA}(\wN,\widetilde\nabla(\nu)
\to\Hom_{\wA}(\wN,\nabla(\nu))$ is surjective, for all $\nu\not=\lambda$.  Put $\Lambda'=\Lambda
\backslash\{\lambda\}$, a proper poset ideal of $\Lambda$ (possibly empty). For $\nu\in\Lambda'$, consider the 
exact sequence
$$\Hom_{\wA_K}(\wD_K,\nabla_K(\nu))\to\Ext^1_{\wA_K}(\wN_K,\nabla_K(\nu))\to\Ext^1_{\wA_K}(\wM_K,\nabla_K(\nu))$$
arising from the long exact sequence for $\Ext$. The right hand end is 0 by hypothesis. The left hand
term is also 0 by construction, as previously argued.  Thus,
the middle term vanishes.  Since both of
its modules belong to $\wA_{\Lambda', K}$,  $\Ext^1_{\wA_{\Lambda'}}(\wN_K,\nabla_K(\nu))=0$
for all $\nu\in\Lambda'$. By induction, $\wN$ has a $\wDelta$-filtration.

It remains only to show that $\wD$ has a $\wDelta$-filtration. We have $$\Ext^1_{\wA}(\wN,\nabla(\nu))=
\Ext^1_A(\wN_k,\nabla(\nu))=0,\quad \forall\nu,$$
 since $\wN$ has a $\wDelta$-filtration.  (For later use, observe that these vanishings and their analogues over $\wA_K$ holds for all groups $\Ext^n$, $n\geq 1$.) Thus, every element
of $\Hom_\wA(\wD,\nabla(\nu))$ comes by restriction from $\Hom_\wA(\wM,\nabla(\nu))$, by the long
exact sequence of cohomology. But every element of $\Hom_\wA(\wM,\nabla(\nu))$, $\nu\not=\lambda$, vanishes on $\wD$ as
shown (for $f$) in the previous paragraph. Therefore, $\Hom_\wA(\wD,\nabla(\nu))=0$ for $\nu\not=\lambda$.
 Thus, the head of $\wD$ has the form $L(\lambda)^{\oplus n}$. Next, observe that,
by a long exact sequence of cohomology argument, $\Ext^1_{\wA_K}(\wD_K,\nabla_K(\nu))=0$, for all $\nu\in\Lambda$.  
Also, $\Hom_{\wA_K}(\wD_K, \nabla_K(\nu)/L_K(\nu))=0$ for all $\nu\in\Lambda$. This is because
$\lambda$ is maximal and the head of $\wD_K$ is isomorphic to $L_K(\lambda)^{\oplus m}$. 
(Here $m$ is a positive integer as is $n$ above.)
Consequently, 
$\Ext^1_{A_K}(\wD_K,L_K(\nu))=0$, for all $\nu$, by another long exact sequence argument. Therefore, $\wD_K$ is a projective $\wA_K$-module.
Hence, $\wD_K(\lambda)\cong\Delta_K(\lambda)^{\oplus m}$. Since the head of $\wD\cong L(\lambda)^{\oplus n}$, $\wD$ is a homomorphic image of $\wDelta(\lambda)^{\oplus n}$. Therefore, $n\geq m$.

If we reduce the lattice $\wD$ mod $\pi$ and count its composition factors, with multiplicities, we get the
same answer as when we reduce mod $\pi$ the lattice $\wDelta(\lambda)^{\oplus m}$ in $\wD_K$. Hence, $n\leq m$. 
Thus, equality holds, and the surjection $\wDelta(\lambda)^{\oplus n}\twoheadrightarrow\wD$ is an
isomorphism, by dimension considerations.
\end{proof}

\begin{rem} Of course, a dual argument shows that $\wM$ has a $\wnabla$-filtration provided that
$\Ext^1_\wA(\wDelta(\mu),\wM)=0$, for all $\mu\in\Lambda$. Or, observe that the argument above
works for right as well as left $\wA$-modules, and then take a linear dual.\end{rem}

\section{Appendix II} 
\setcounter{equation}{0}

We assume the notation and setup of the beginning of \S4.1, and let $\Gamma$
be a finite poset of regular weights. Assume that the ``LCF holds for $\Gamma$" in the sense that $\Delta^p(\gamma) =\rDelta(\gamma)$, for all $\gamma\in\Gamma$. We will prove a substitute for Lemmas \ref{preliminaryLCF} and \ref{numlemma} under
this assumption, without requiring that the LCF holds for all dominant weights in the Jantzen region. 
More precisely, we claim
that $(\widetilde\soc\, Q^\sharp(\gamma))^\Gamma\cong\rnabla(\gamma)$, for all $\gamma\in\Gamma$.

The condition $\Delta^p(\gamma)=\rDelta(\gamma)$ is equivalent to the statement $\Delta^p(\gamma_0)
=\rDelta(\gamma_0)$, if $\gamma=\gamma_0+p\gamma_1$, with $\gamma_0\in X_1(T)$, $\gamma_1\in
X(T)_+$. Of course, $\Delta^p(\gamma_0)=L(\gamma_0)$.  We also note the obvious fact that $\Delta^p(\gamma)|_\fa$ is a direct sum of copies of the irreducible $\fa$-module $L(\gamma_0)|_\fa$. 

The module $ Q^\sharp(\gamma)$ is dual to $P^\sharp(\gamma)/(\wrad\fa)P^\sharp(\gamma)$, which is the reduction mod $\pi$ of $\wP^\sharp(\gamma)/\wrad\wP^\sharp(\gamma)$, as noted in the proof of
Lemma \ref{numlemma}. The latter
module may be interpreted as $(\gr\wP^\sharp(\gamma))_0$, and $\gr\wP^\sharp(\gamma)$ has
a graded $\gr\wDelta$-filtration. Thus, $(\gr\wP^\sharp(\gamma))_0$ has a $\wrDelta$-filtration.  All $\wrDelta(\nu)$-sections, for $\nu\in\Gamma$, may be assumed to occur together at the top. Tracing
back to $\widetilde\soc Q^\sharp(\gamma)$ gives that the desired equality
 $(\widetilde Q^\sharp(\gamma))^\Gamma=\rnabla(\gamma)$ holds if and only if the only section $\wrDelta(\nu)$, $\nu\in\Gamma$,
 occurring in the $\wrDelta$-filtration of $\wP^\sharp(\gamma)/\wrad\wP^\sharp(\gamma)$ is the evident top
 term $\wrDelta(\gamma)$. Equivalently, the head of $\wP^\sharp(\gamma)_{K,\Gamma}$ is $L_K(\gamma)$. We now prove this assertion. 
 
 Write $\wP^\sharp(\gamma)_K$ as a direct sum of indecomposable $\wU_{\zeta,K}$-modules. Each is,
 of course, a projective indecomposable module, and so has the form $P_K(\omega)$ for some $\omega\in \Xreg $. Since
 there is a nonzero homomorphism $\wP^\sharp(\gamma)\subseteq\wP^\sharp(\gamma)_K\to
 P_K(\omega)/\rad P_K(\omega)\cong L_K(\omega)$, $L(\gamma)$ must appear in the reduction mod
 $\pi$ of any full lattice in $L_K(\omega)$. Hence, $\gamma\leq \omega$. 
 
 Suppose that $\omega\in\Gamma$. Then $\rDelta(\omega_0)=L(\omega_0)$ is the irreducible $G$-module of highest weight $\omega_0$, hence is an irreducible $G_1$-module, and the same is true for $L(\gamma_0)$. Thus, $L(\gamma_0)\cong L(\omega_0)$ as $G_1$-modules, and $\gamma_0=\omega_0$. Moreover, the highest weight $2(p-1)\rho+w_0\omega_0$ of $Q(\gamma_0)$ is a weight of $Q_K(\gamma_0)=P_K(\gamma_0)$, so $2(p-1)\rho+w_0\omega_0+p\omega_1\leq 2(p-1)\rho+w_0\gamma_0+p\gamma_1$, the highest weight of $Q^\sharp(\gamma)$. Therefore $\omega_1\leq\gamma_1$, since $\omega_0=\gamma_0$.
 Also, $\omega_1\geq \gamma_1$, since $\omega\geq \gamma$, as already shown. Thus, $\omega=\gamma$.
 
 Consequently, all irreducible modules in the head $\wP^\sharp(\gamma)_{K,\Gamma}$ are isomorphic
 to $L_K(\gamma)$. However, $L_K(\gamma)$ can appear only once in the head, since $\wP^\sharp(\gamma)$ is a homomorphic image of the projective $\wA_\Lambda$-cover $\wP(\gamma)$ of $\wDelta(\gamma)$, and its own $\wDelta$-filtration is part of a $\wDelta$-filtration of $\wP(\gamma)$. Hence, $L_K(\gamma)$ occurs only once in the head of $\wP^\sharp(\gamma)$, with all other composition factors of the head indexed by
 higher weights. This completes the proof of the claimed equality.
 
\medskip The claim can now serve as a substitute for   Lemma \ref{numlemma}. This allows both
 Theorems \ref{thm4.4} and \ref{firstmainnum}  to go through, provided the highest weights of composition factors of
 the module $M$ belong to a poset $\Gamma$ for which the LCF holds. Therefore, the following
 analog of Theorem \ref{MainTheorem} holds.
 
 \begin{thm}\label{thm7.1}Assume that $p\geq 2h-2$ is odd. Suppose that $\lambda\in \Xreg $ and that
 the LCF holds for the poset $\Gamma:=\{\gamma\in \Xreg \,|\,\gamma<\lambda\}$. Then
 each section $\Delta_{\widetilde s}(\lambda)$, $s\geq 0$, viewed as a rational $G$-module, has a
 $\rDelta$-filtration. Each standard module $\Delta(\gamma)$, 
with $\gamma\in X(T)_+$ satisfying  $\gamma \leq \lambda$, also has a $\rDelta$-filtration.
 \end{thm}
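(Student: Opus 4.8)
The plan is to prove the first assertion by repeating the proof of Theorem~\ref{MainTheorem} with $M=\Delta(\lambda)$, except that inside Lemma~\ref{numlemma} and Theorem~\ref{thm4.4} the appeal to Lemma~\ref{preliminaryLCF}(c) (which uses the LCF for $G$) is replaced by the substitute established above, namely $(\widetilde\soc\, Q^\sharp(\gamma))^\Gamma\cong\rnabla(\gamma)$ for $\gamma\in\Gamma$, valid since the LCF is assumed on $\Gamma$. Concretely, I would fix a finite ideal $\Lambda$ in $\Xreg$ containing $\Gamma^+:=\{\gamma\in\Xreg:\gamma\le\lambda\}$ and large enough that (\ref{w0}) holds for all $\gamma\in\Gamma^+$, put $\wA=\wA_\Lambda$, and note that $\wDelta(\lambda)$ is an $\wfa$-tight --- hence (Corollary~\ref{tightcor}(a)) $\wA$-tight --- lifting of $\Delta(\lambda)$ by Corollary~\ref{cor3.9}. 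The $\Ext^1$-hypothesis (\ref{Donkin}) of Theorem~\ref{thm4.4} is checked exactly as in Theorem~\ref{MainTheorem}: $\gr\wDelta(\lambda)$ has a $\gr\wDelta$-filtration (Theorem~\ref{gradedDeltas}), $\gr^\diamond\wQ^\sharp(\mu)$ has a $\gr\wnabla$-filtration (Corollary~\ref{cor3.3}), and $\Ext^1$ between $\gr\wDelta$- and $\gr\wnabla$-filtered modules over the QHA $\gr\wA$ vanishes --- no character formula enters here.

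The only genuine adjustment is forced by the fact that $L(\lambda)$ is a composition factor of $\Delta(\lambda)$ while $\lambda\notin\Gamma$, so I would separate off the top section. First, tightness of $\wDelta(\lambda)$ together with Corollary~\ref{tightcor}(c) and the last paragraph of the proof of Corollary~\ref{ZeroDegreeQHA} give $\Delta(\lambda)_{\widetilde0}=(\wgr\Delta(\lambda))_0=\overline{(\gr\wDelta(\lambda))_0}=\overline{\wrDelta(\lambda)}=\rDelta(\lambda)$, which trivially has a one-term $\rDelta$-filtration, with no appeal to the LCF. For $s\ge1$, the section $\Delta(\lambda)_{\widetilde s}$ has composition factors only among the $L(\tau)$ with $\tau<\lambda$ (the unique copy of $L(\lambda)$ sitting in degree $0$), so $\Delta(\lambda)_{\widetilde s}\in A_\Gamma$-mod. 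One then proves by induction on $r$ that $\Delta(\lambda)_{\widetilde r}$ has a $\rDelta$-filtration: the base case $r=0$ is the identification just made, and in the inductive step the numerical argument of Theorem~\ref{thm4.4} (through Theorem~\ref{secondmainnum} and Lemma~\ref{numlemma}) applies because at each level $r\ge1$ the only weights $\mu$ that occur --- those with $L_K(\mu)$ a constituent of $(\gr\wDelta(\lambda)_K)_r$, equivalently those with $\Hom_A(\Delta(\lambda)_{\widetilde r},\rnabla(\mu))\ne0$ --- satisfy $\mu<\lambda$, hence $\mu\in\Gamma$; and for such $\mu$, homomorphisms out of the $\Gamma$-supported module $\Delta(\lambda)_{\widetilde r}$ into $Q^\sharp(\mu)$ necessarily factor through $(\widetilde\soc\, Q^\sharp(\mu))^\Gamma\cong\rnabla(\mu)$, which is precisely the substitute. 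Splicing the $\rDelta$-filtrations of the finitely many sections along the filtration $\{(\wrad^n\wfa)\Delta(\lambda)\}_{n\ge0}$ then yields a $\rDelta$-filtration of $\Delta(\lambda)$ itself.

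For the last assertion, $\gamma=\lambda$ is the case just treated; for a regular weight $\gamma<\lambda$ one has $\{\eta\in\Xreg:\eta<\gamma\}\subseteq\Gamma$, so the first part of the theorem applied to $\gamma$ in place of $\lambda$ gives the $\rDelta$-filtration of $\Delta(\gamma)$. For a non-regular $\gamma\le\lambda$, I would invoke a Jantzen translation functor $T$ to the relevant wall with $T\Delta(\gamma')\cong\Delta(\gamma)$ for a suitably chosen regular weight $\gamma'$, just as in the corollary following Theorem~\ref{MainTheorem}. The point is that $T$ is exact and carries every $\rDelta$-module to a $\rDelta$-module or to $0$ --- the $\rDelta$-analogue of \cite[Lemma 3.1(b)]{CPS7} --- and this holds regardless of the LCF because translation on $U_\zeta$-mod sends irreducibles to irreducibles or to $0$ and preserves minimal admissible lattices; hence applying $T$ to a $\rDelta$-filtration of $\Delta(\gamma')$ produces one for $\Delta(\gamma)$, the instances of the LCF needed for $\Delta(\gamma')$ being inherited from those on $\Gamma$ via the translation correspondence of weights (all of which lie below $\lambda$).

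The hard part will be the bookkeeping in the second paragraph: verifying that after setting aside $\Delta(\lambda)_{\widetilde0}=\rDelta(\lambda)$, every weight that actually enters the numerical identity of \S4 at levels $r\ge1$ already lies in $\Gamma$, so that the substitute $(\widetilde\soc\, Q^\sharp(\mu))^\Gamma\cong\rnabla(\mu)$ --- which is silent about the LCF at $\lambda$ --- is all that is used, and in particular that it is the $\Gamma$-truncated socle, not $\widetilde\soc\, Q^\sharp(\mu)$ itself, that appears. By comparison the translation-functor step of the last paragraph is routine, the only mild issues being the preservation of minimal lattices (hence of $\rDelta$-modules) at the integral level and the transfer of the required LCF instances to the auxiliary regular weight.
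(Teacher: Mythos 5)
Your proof is correct and follows essentially the route the paper sketches in \S7: use the substitute $(\widetilde\soc\, Q^\sharp(\gamma))^\Gamma\cong\nabla_{\text{red}}(\gamma)$ in place of Lemma \ref{preliminaryLCF}(c), rerun the argument behind Theorems \ref{thm4.4} and \ref{firstmainnum}, and then follow the proof of Theorem \ref{MainTheorem}. The one genuinely nontrivial point you add — and which the paper glosses over when it says the modified criterion ``goes through provided the highest weights of composition factors of $M$ belong to $\Gamma$'' — is the handling of the weight $\lambda$ itself, which is a composition factor of $\Delta(\lambda)$ but lies outside $\Gamma$; your observation that $\Delta(\lambda)_{\widetilde 0}=(\gr\wDelta(\lambda))_0\otimes_\sO k=\Delta^{\text{red}}(\lambda)$ holds without any character-formula input (via Corollary \ref{cor3.9} tightness, Corollary \ref{tightcor}(c), and the last paragraph of the proof of Corollary \ref{ZeroDegreeQHA}), and that for $s\geq 1$ the sections $\Delta(\lambda)_{\widetilde s}$ are $A_\Gamma$-modules so the $\Gamma$-truncated substitute applies, is exactly what is needed to make the induction clean. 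Your treatment of the last sentence of the theorem (regular $\gamma\le\lambda$ by re-running the first part, non-regular $\gamma$ by translation) also matches what the paper leaves to the reader.

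The only place I would flag as underspecified — in both your write-up and the paper — is the translation step: one must actually exhibit a regular $\gamma'$ with $T\Delta(\gamma')\cong\Delta(\gamma)$ \emph{and} with the required LCF instances (those at weights $<\gamma'$) inherited from $\Gamma$; and one must justify that the translation functor carries $\Delta^{\text{red}}$-modules to $\Delta^{\text{red}}$-modules or $0$ at the integral level (preservation of minimal admissible lattices). These are standard but not entirely trivial, and the paper explicitly punts on them, so your sketch is at an acceptable level of detail for this context.
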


We remind the reader that $\rDelta(\lambda')= \Delta^p(\lambda')$ for
$\lambda' \in \Gamma$. A similar statement holds for $\gamma' \in X(T)_+$ and
and $\gamma'\leq \lambda'$, by a translation argument, which we leave
to the reader.

\end{document}